\newtheoremstyle{thms}{0.2em}{0.2em}{\itshape}{}{\bfseries}{.}{ }{}
\theoremstyle{thms}
\newtheoremstyle{thms}{0.2em}{0.2em}{\itshape}{}{\bfseries}{.}{ }{}
\theoremstyle{plain}
\theoremstyle{definition}
\newtheorem{theorem}{Theorem}[section]
\newtheorem{lemma}[theorem]{Lemma}
\newtheorem{corollary}[theorem]{Corollary}
\newtheorem{definition}[theorem]{Definition}
\newtheorem{proposition}[theorem]{Proposition}
\newtheorem{remark}[theorem]{Remark}
\newtheorem{definition-proposition}[theorem]{Definition-Proposition}
\newcommand*{\relrelbarsep}{.386ex}
\newcommand*{\relrelbar}{%
  \mathrel{%
    \mathpalette\@relrelbar\relrelbarsep
  }%
}
\newcommand*{\@relrelbar}[2]{%
  \raise#2\hbox to 0pt{$\m@th#1\relbar$\hss}%
  \lower#2\hbox{$\m@th#1\relbar$}%
}
\providecommand*{\rightrightarrowsfill@}{%
  \arrowfill@\relrelbar\relrelbar\rightrightarrows
}
\providecommand*{\leftleftarrowsfill@}{%
  \arrowfill@\leftleftarrows\relrelbar\relrelbar
}
\providecommand*{\xrightrightarrows}[2][]{%
  \ext@arrow 0359\rightrightarrowsfill@{#1}{#2}%
}
\providecommand*{\xleftleftarrows}[2][]{%
  \ext@arrow 3095\leftleftarrowsfill@{#1}{#2}%
}
\DeclareMathOperator{\Aut}{\mathrm{Aut}}		% automorphism scheme
\DeclareMathOperator{\Diag}{Diag}
\DeclareMathOperator{\End}{End} % The algebra of endomorphisms
\DeclareMathOperator{\GL}{GL}	
\DeclareMathOperator{\Hom}{Hom}
\DeclareMathOperator{\Out}{Out}	% outer automorphism group
\DeclareMathOperator{\Gal}{Gal}	
\DeclareMathOperator{\Grp}{Grp}	
\DeclareMathOperator{\Spec}{Spec}		% Spectrum of a ring
\DeclareMathOperator{\Stab}{Stab}
\title{Relative Bruhat decomposition of wonderful compactification}
\author{Fei Chen and Shang Li}
\address{Tsinghua University, Yau Mathematical Sciences Center, Beijing, China}
\email{fchen@tsinghua.edu.cn}
\address{Tsinghua University, Yau Mathematical Sciences Center, Beijing, China}
\email{shangli@tsinghua.edu.cn}
\date{\today}
\subjclass[2020]{Primary 14M27; Secondary 14L30, 14L17.}
\keywords{Reductive group, wonderful compactification, Bruhat decomposition, Borel--Tits theory}
\begin{document}

\setcounter{tocdepth}{1}

\maketitle

\begin{abstract}
    In the seminal paper of Borel and Tits about reductive groups, they show some fundamental results about Bruhat cells with respect to a minimal parabolic subgroup, e.g., relative Bruhat decomposition and its geometrization, relative Bruhat order and the relation of Zariski closure and topological closure.
    In this paper, we show analogous results for Bruhat cells of wonderful group compactification in the sense of De Concini and Procesi.
    Our results can be viewed as the version at infinity of those of Borel and Tits. Our main focus is general base field. When the base field is algebraically closed, most of our results are proved by Brion and Springer.
\end{abstract}

\tableofcontents

\pagestyle{plain}

\section{Introduction}

Let $G$ be a connected reductive group over a (not necessarily algebraically closed) field $k$ containing a maximal $k$-split $k$-torus $S$. Let $P\subset G$ be a minimal parabolic subgroup containing $S$, and let $W$ be the relative Weyl group. 

In the Borel--Tits theory for reductive groups over arbitrary base fields, the following version of Bruhat decomposition plays an important role.

\begin{theorem}\label{RelativeBruhatforgroup}(\cite[théorème~5.15]{boreltits}, \cite[théorème~3.13, corollaire~3.15]{boreltitscomplement})

(1). (Relative Bruhat decomposition) $G(k)$ is the disjoint union of $P(k)\cdot w\cdot P(k)$ with $w\in W$.

(2). (Geometric refinement of (1)) If $P\cdot w\cdot P$ meets $P\cdot w'\cdot P$ inside $G$, then $w=w'$.

(3). (Relative Bruhat order) If $k$ is infinite, we have
$$\overline{P\cdot w \cdot P}(k)=\overline{P(k)\cdot w\cdot P(k)}\bigcap G(k)=\bigcup_{w'\leq w}P(k)\cdot w'\cdot P(k).$$

(4). (Topological closure) Suppose that $k$ is a topological field equipped with a non-discrete topology satisfying the separation axiom $T_1$ and that $G(k)$ is endowed with a topology $\mathcal{T}$ induced from that of $k$. Then the closure of $P(k)\cdot w\cdot P(k)$ with respect to $\mathcal{T}$ coincides with the relative Zariski closure $\overline{P(k)\cdot w\cdot P(k)}\bigcap G(k)$.
\end{theorem}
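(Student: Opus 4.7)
The plan is to derive the theorem from the absolute Bruhat decomposition over $\bar{k}$ by a Galois descent argument, with the central rationality input being the surjectivity of $N_G(S)(k) \to W$. Because $S$ is $k$-split, the Weyl group $W = N_G(S)/Z_G(S)$ is a constant $k$-group scheme; to lift elements, I would exhibit a $k$-point in each $Z_G(S)$-coset by using the structure of $Z_G(S)$ as an extension of an anisotropic reductive group by $S$, combined with density of rational points (for infinite $k$) or Lang's theorem (for finite $k$). This surjectivity is the single fact that makes all four parts accessible.

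For parts (1) and (2), I would begin with the scheme-theoretic absolute decomposition $G_{\bar{k}} = \bigsqcup_{v \in W_G} B v B$, where $B \subset P_{\bar{k}}$ is a Borel with absolute Weyl group $W_G$. Since $P_{\bar{k}}$ is a union of Borels and its absolute Weyl group $W_P$ satisfies $W_P \backslash W_G / W_P \cong W$, grouping by $W$ realizes $P w P$ as a locally closed $k$-subscheme of $G$ equal to a union of $\bar{k}$-Bruhat cells. Disjointness in $G$ (part (2)) then follows from disjointness of absolute cells. To get the decomposition on $k$-points (part (1)), I would combine the lifting $N_G(S)(k) \twoheadrightarrow W$ with density of $P(k) w P(k)$ in $(PwP)(k)$, the latter obtained from the smoothness of the multiplication map $P \times P \to PwP$ together with Hensel-type / openness arguments.

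For part (3), I would induct on the length $\ell(w)$ in the relative Weyl group. The base case reduces to rank-one parabolics $P_\alpha \supset P$ attached to simple $k$-roots: here $P_\alpha / P \cong \mathbb{P}^1_k$ and the assertion is the classical statement that the big cell of $\mathbb{P}^1_k$ is $k$-dense when $k$ is infinite. The inductive step uses the standard multiplication formula $(P s_\alpha P)(P w P) \subset P w P \cup P s_\alpha w P$ together with the inductive density hypothesis, and the identification $\overline{P(k) w P(k)} \cap G(k) = \overline{P w P}(k)$ follows by intersecting the Zariski closure with $G(k)$ and tracking the cell structure. For part (4), the inclusion $\overline{P(k) w P(k)}^{\mathcal{T}} \subset \overline{P w P}(k)$ is continuity of polynomial maps; the reverse is a local argument showing that any $k$-point of a smooth $k$-variety lies in the $\mathcal{T}$-closure of the $k$-points of any Zariski-open dense subscheme, which is where non-discreteness of $\mathcal{T}$ and $T_1$ enter (through an implicit-function-style extraction of nearby $k$-points in an open cell).

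The chief obstacle is the rationality input supporting all four parts: producing $k$-points in $N_G(S)$ representing every Weyl group element, and the density/openness statement relating $P(k) w P(k)$ to $(P w P)(k)$. These facts are automatic over $\bar{k}$ but over a general base field require the Borel--Tits structure theory (anisotropic kernel, relative root system, rank-one reduction). Once they are in hand, parts (1)--(4) are formal consequences via Galois descent, length induction, and the topology-comparison argument outlined above.
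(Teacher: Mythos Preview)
The paper does not prove this theorem. Theorem~\ref{RelativeBruhatforgroup} is stated in the introduction as background, with explicit citations to Borel--Tits (\cite[théorème~5.15]{boreltits} and \cite[théorème~3.13, corollaire~3.15]{boreltitscomplement}); it is then invoked throughout the paper (e.g.\ in the proofs of Proposition~\ref{borbitproposition}, Theorem~\ref{theoremofgeometrization}, Theorem~\ref{closureinsamegorbit}) as a known input, not rederived. So there is no ``paper's own proof'' to compare your proposal against.

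As for the proposal itself, your outline is broadly in the spirit of the Borel--Tits arguments, but one step is stated too weakly to close part~(1). You write that you would use ``density of $P(k)wP(k)$ in $(PwP)(k)$''; density is not enough to get a partition of $G(k)$. What is actually needed---and what Borel--Tits prove---is the \emph{equality} $(PwP)(k)=P(k)\cdot w\cdot P(k)$, which comes from the cell isomorphism $U_w\times_k P \xrightarrow{\sim} PwP$ (with $U_w$ a product of relative root groups, hence a $k$-affine space) rather than from a smoothness/Hensel argument. The same cell description, together with the surjectivity $N_G(S)(k)\twoheadrightarrow W$ you correctly isolate, drives parts~(3) and~(4) as well; your inductive sketch for those is along the right lines but would need this sharper structural input to go through.
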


The Bruhat decomposition dates back to Gauss elimination method in linear algebra and plays a crucial role in group theory and representation theory, see \cite{lusztigbruhatdecompositionapplications} for a survey. Prior to \cite{boreltits},
the proofs of \Cref{RelativeBruhatforgroup} (1) are given by Harish-Chandra and Bruhat in \cite{Hrishchandra} and \cite{Bruhat} respectively when $k=\mathbb{C}$. Chevalley gives a proof valid for arbitrary algebraically closed base field in \cite{chevalley55}.

The main goal of this article is to show analogous results for wonderful group compactification, which is constructed by De~Concini ,Procesi \cite{completesymmetricvarieties} and Strickland \cite{strickland} over algebraically closed fields. The wonderful compactification for a \emph{non-split group} is obtained by \emph{Galois descent}. 
 
To state our results, we now assume that $G$ is adjoint, i.e., the center of $G$ is trivial. Let $\overline{G}$ be the wonderful compactification of $G$. Let $P^-\subset G$ be the opposite minimal parabolic subgroup such that $P\bigcap P^-=Z_G(S)$. Let $\Delta\subset X^{\ast}(S)$ be the set of (relative) simple roots corresponding to $P$. Let $l:W\rightarrow \mathbb{Z}$ be the usual length function. As an analogue of \Cref{RelativeBruhatforgroup} (1), we prove the following result, which is due to Brion when $k=\mathbb{C}$ \cite[\S, 2.1]{behaviouratinfinityofbruhat}.
\begin{theorem}\label{introBruhatdecomposition}(\Cref{propositionofbasepoint}, \Cref{borbitproposition})
    For each $P(k)\times P^-(k)$-orbit $K$ in $\overline{G}(k)$, there are unique $I\subset \Delta$, $\sigma, \tau\in W^I$, $\rho\in W_I$ and a unique base point $b\in K$ satisfying
    \begin{itemize}
        \item $(P\times_k P^-)\cdot b$ is open in $(G\times_k G)\cdot b$ and
        \item $b=\lim\limits_{x \to 0} \lambda(x)$ for a cocharacter $\lambda\in X_{\ast}(S)$
    \end{itemize}
    so that we have
    $K=(P(k)\times P^-(k))\cdot (\sigma \rho,\tau)\cdot b .$
\end{theorem}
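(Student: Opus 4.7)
The strategy is to combine the $G\times_k G$-orbit stratification of $\overline{G}$ with the relative Bruhat decomposition (Theorem~\ref{RelativeBruhatforgroup}) applied fiberwise over each stratum. I would first recall or establish that $\overline{G}$ is stratified by $G\times_k G$-orbits $O_I$ indexed by subsets $I\subset\Delta$ of the relative simple roots; this is the classical wonderful stratification over $\overline{k}$, descended to $k$ using that the Galois-orbits on the absolute simple roots restrict to give precisely the relative simple roots $\Delta$. For each $I$, I would construct the base point $z_I\in O_I(k)$ as the limit $\lim_{x\to 0}\lambda_I(x)$ for any cocharacter $\lambda_I\in X_{\ast}(S)$ lying in the face of the antidominant Weyl chamber determined by $I$, and verify its $k$-rationality and independence of the choice of $\lambda_I$ within the face. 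The stabilizer $H_I:=\mathrm{Stab}_{G\times_k G}(z_I)$ has unipotent radical $R_u(P_I)\times R_u(P_I^-)$, with reductive quotient the diagonal copy of $L_I/Z(L_I)$; consequently $O_I\cong (G\times_k G)/H_I$ as $(G\times_k G)$-varieties.

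With the base point and stabilizer identified, the classification of $P(k)\times P^-(k)$-orbits in $O_I(k)$ reduces to describing the double cosets $(P(k)\times P^-(k))\backslash (G(k)\times G(k))/H_I(k)$. Applying Theorem~\ref{RelativeBruhatforgroup}(1) to each $G$-factor and then collapsing the right $H_I$-action using the diagonal Levi condition and the unique factorization $W=W^IW_I$, I would show that these double cosets are in bijection with triples $(\sigma,\rho,\tau)\in W^I\times W_I\times W^I$, the representative for each triple being $(\sigma\rho,\tau)\cdot z_I$. Setting $b:=(\sigma\rho,\tau)\cdot z_I$, this lies in $K$ by construction and is a cocharacter limit (for the Weyl-conjugate of $\lambda_I$). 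The openness property, that $(P\times_k P^-)\cdot b$ is open in the relevant $(G\times_k G)$-subvariety containing $b$, follows from the corresponding statement for $z_I$ itself, which reduces to a dimension count of relative root spaces inside the unipotent radicals of $P, P^-$ and the stabilizer $H_I$.

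Uniqueness of $I$ is immediate from the disjointness of the stratification, while uniqueness of $(\sigma,\rho,\tau)$ combines the unique factorization $W=W^IW_I$ with Theorem~\ref{RelativeBruhatforgroup}(2); uniqueness of $b$ within $K$ is extracted from the fact that two cocharacter limits in the same $(P\times_k P^-)$-orbit satisfying the openness condition must coincide, by an argument on $S$-fixed loci of $O_I$. The main technical obstacle I foresee is the explicit description of $H_I$ and its intersection with $P\times_k P^-$ over a non-algebraically-closed field, together with the verification of $k$-rationality of $z_I$; these require a careful use of Borel--Tits machinery for relative root systems and Galois descent. Once those foundational facts are in place, the orbit parameterization is a formal manipulation of Bruhat double cosets in the two $G$-factors using Theorem~\ref{RelativeBruhatforgroup}.
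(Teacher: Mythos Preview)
Your two-step architecture—first pass to the ambient $G\times_k G$-stratum, then run relative Bruhat there—is the paper's, and your handling of the second step (project along the fibration over $G/P_I^-\times_k G/P_I$, then use $W=W^IW_I$ on base and fibre) is essentially Proposition~\ref{borbitproposition}.

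The genuine gap is in the first step. Your sentence ``the classification of $P(k)\times P^-(k)$-orbits in $O_I(k)$ reduces to describing the double cosets $(P(k)\times P^-(k))\backslash (G(k)\times G(k))/H_I(k)$'' presupposes that $G(k)\times G(k)$ acts \emph{transitively} on $O_I(k)$. Galois-descending the stratification and exhibiting a $k$-rational $z_I$ does not give this: a priori $O_I(k)$ could break into several $G(k)\times G(k)$-orbits, the obstruction living in $H^1(k,H_I)$—equivalently, via the fibration, in the cokernel of $L_I(k)\to (L_I/Z_I)(k)$, which is controlled by $H^1(k,Z_I)$ for the generally non-split central torus $Z_I$. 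You list the description of $H_I$ and the $k$-rationality of $z_I$ as the anticipated obstacles, but this transitivity question is the real one, and nothing in your outline addresses it. The paper does not attack it cohomologically either: Proposition~\ref{propositionofbasepoint} instead combines a curve-selection statement (Lemma~\ref{lemmaofvaluation}), producing for any $o\in\overline{G}(k)$ a point $\theta\in G(k((t)))$ specialising to $o$, with the Cartan decomposition of $G(k((t)))$ relative to $G(k[[t]])$ and $X_{\ast}(S)$, to exhibit $o$ directly as a $G(k)\times G(k)$-translate of some cocharacter limit $b_I$. This Cartan-decomposition step is the main idea you are missing. A smaller point: the base point satisfying the openness condition is $z_I$ itself, sitting in the ambient $G(k)\times G(k)$-orbit; its Weyl translate $(\sigma\rho,\tau)\cdot z_I$ is still a cocharacter limit, but its $P\times_k P^-$-orbit is not open in the stratum unless $\sigma=\tau=\rho=1$.
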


As an analogue of \Cref{RelativeBruhatforgroup} (2), we have the following result which should be viewed as a geometric refinement of \Cref{introBruhatdecomposition}.

\begin{theorem}\label{introgeometrization}(\Cref{theoremofgeometrization})   
   Given two $P(k)\times P^-(k)$-orbits \[K_i\coloneq (P(k)\times_k P^{-}(k))\cdot (\sigma_i \rho_i,\tau_i)\cdot b_{i}\subset \overline{G}(k),\] where $b_i\in \overline{G}(k)$, $I_i\subset \Delta$, $\rho_i\in W_{I_i}$ and $\sigma_i, \tau_i\in W^{I_i}$ as in \Cref{introBruhatdecomposition} for $i=1,2$. Consider their ``geometrizations'' $O_i\coloneq (P\times_k P^{-})\cdot (\sigma_i \rho_i,\tau_i)\cdot b_{i}\subset \overline{G},\; i={1,2}.$
   If $O_1\bigcap O_2\neq \emptyset$, then $b_1=b_2$, $I_1=I_2$, $(\sigma_1,\tau_1)=(\sigma_2,\tau_2)$ and $\rho_1=\rho_2$.
\end{theorem}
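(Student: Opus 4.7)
The plan is to reduce the intersection hypothesis to an equality of geometric orbits and then extract the remaining equalities from the uniqueness portion of Theorem~\ref{introBruhatdecomposition}.

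First, since $O_1$ and $O_2$ are scheme-theoretic orbits for the action of the algebraic group $P\times_k P^{-}$ on $\overline{G}$, they are either equal or disjoint. The hypothesis $O_1\cap O_2\neq\emptyset$ therefore forces $O_1=O_2=:O$. Saturating by $G\times_k G$, the variety $(G\times_k G)\cdot O$ is a single $G\times G$-orbit of $\overline{G}$, and since the $G\times G$-orbits of the wonderful compactification are canonically indexed by subsets of $\Delta$ (by the construction of De~Concini--Procesi), this forces $I_1=I_2=:I$.

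The crux is then to establish $b_1=b_2$, which is a statement about specific $k$-points even though we only know equality of geometric orbits. Both $b_i$ lie in $O$, arise as limits of $S$-cocharacters, and satisfy that $(P\times P^{-})\cdot b_i$ is open in $(G\times G)\cdot b_I$. I would argue that within the geometric orbit $O$ there is at most one $\overline{k}$-point enjoying these two properties, and that such a point is automatically $k$-rational since $S$ is $k$-split. Concretely, the cocharacter limits lying in the fixed $G\times G$-orbit form a finite set (essentially the $S\times S$-fixed locus inside the closed $S\times S$-orbit of $(G\times G)\cdot b_I$), and the openness condition selects among them those whose stabilizer in $P\times P^{-}$ has the right dimension. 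A direct comparison of the $(P\times P^{-})$-orbits in $(G\times G)\cdot b_I$ then shows that each such orbit meets this locus in precisely one point, which gives $b_1=b_2=:b$.

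Once $b_1=b_2$ is known, the containment $b\in K_i$ coming from Theorem~\ref{introBruhatdecomposition} places $b\in K_1\cap K_2$, whence $K_1=K_2$ as $P(k)\times P^{-}(k)$-orbits. Applying the uniqueness clause of Theorem~\ref{introBruhatdecomposition} to this common $k$-orbit then immediately yields the remaining equalities $(\sigma_1,\tau_1)=(\sigma_2,\tau_2)$ and $\rho_1=\rho_2$.

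The main obstacle lies in the third paragraph: pinning down the base point of a geometric $(P\times P^{-})$-orbit in the wonderful compactification. This requires a controlled analysis of the $S\times S$-orbit structure on each $G\times G$-orbit of $\overline{G}$ and of the cocharacter limits sitting within it; once this is in hand the remainder of the proof is essentially formal.
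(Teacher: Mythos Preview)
Your first paragraph is fine: the $O_i$ are genuine $(P\times_k P^-)$-orbits, hence equal or disjoint, so $O_1=O_2$ and $I_1=I_2$. In fact once $I_1=I_2$ the equality $b_1=b_2$ is immediate, since by construction $b_i=b_{I_i}\in\widetilde{S}(k)$ is the point with $0$'s in the $I_i$-coordinates and $1$'s elsewhere. So your entire third paragraph is unnecessary.

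The genuine gap is in your fourth paragraph. The claim ``$b\in K_i$'' is false in general. The defining property of $b=b_I$ is that $(P\times_k P^-)\cdot b$ is \emph{open} in the ambient $G\times G$-orbit; hence $b$ lies in the $P(k)\times P^-(k)$-orbit $K_i$ only when $K_i$ is that open orbit, i.e.\ when $\sigma_i=\tau_i=\rho_i=1$. (Likewise $b\notin O$ in general, so the argument in paragraph three is also flawed, not just redundant.) Consequently you cannot conclude $K_1=K_2$ from $O_1=O_2$ this way. Nor can you appeal to the statement that $O(k)$ is a single $P(k)\times P^-(k)$-orbit: in the paper that fact (\Cref{kptsoforbit}) is a \emph{corollary} of the theorem you are trying to prove, so invoking it here would be circular.

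What is actually needed to recover the parameters from the geometric orbit is exactly what the paper does: project $O$ along the $(G\times_k G)$-equivariant map $\pi_I\colon \overline{X_I}\to G/P_I^-\times_k G/P_I$ to land in a product of Bruhat cells $(P\sigma_i P_I^-/P_I^-)\times_k(P^-\tau_i P_I/P_I)$, and then invoke the geometric refinement of the relative Bruhat decomposition on partial flag varieties to obtain $\sigma_1=\sigma_2$ and $\tau_1=\tau_2$. Extracting $\rho_1=\rho_2$ requires a further step: picking a common point in $O_1\cap O_2$, passing through the explicit description of $\Stab_{G\times_k G}(b_I)$, and reducing to the relative Bruhat decomposition inside $L_I$. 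None of this is supplied by the uniqueness clause of \Cref{introBruhatdecomposition}, which only speaks about $k$-orbits.
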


As a generalization of the relative Bruhat order of $G$ (\Cref{RelativeBruhatforgroup} (3)), we show the following result which is due to Springer (based on Brion's work) when $k$ is algebraically closed.

\begin{theorem}(\Cref{relbruhatmainthm})
    Given 
    $$O_1=(P\times_kP^-)\cdot(\sigma_1\rho_1,\tau_1)\cdot b_{1}\;\;\text{and}\;\;
    O_2=(P\times_kP^-)\cdot(\sigma_2\rho_2,\tau_2)\cdot b_{2},$$
where $b_i\in \overline{G}(k)$, $\sigma_i,\tau_i\in W^{I_i}$ and $\rho_i\in W_{I_i}$ for $i=1,2$ as in \Cref{introgeometrization}.
Then $O_1\subset \overline{O_2}$ if and only if $I_1\subset I_2$ and there exist $v\in W_{I_2}\bigcap W^{I_1}$, $u\in W_{I_1}$ such that 
$\sigma_1\rho_1u\geq \sigma_2\rho_2v \text{, } \tau_1\geq \tau_2vu^{-1}$
and $l(\rho)=l(\rho v)+l(v)$.
\end{theorem}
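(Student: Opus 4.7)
The plan is to bootstrap the theorem from Springer's analog over algebraically closed fields, using the relative parametrization in \Cref{introBruhatdecomposition} together with its geometric refinement \Cref{introgeometrization}, and then to descend via Galois theory. Since $O_1 \subset \overline{O_2}$ is a purely geometric condition on $k$-subschemes of $\overline{G}$, it is equivalent to the same inclusion after base change to $\overline{k}$, so I may assume throughout that the base field is algebraically closed. Note, however, that the minimal parabolic $P$ becomes over $\overline{k}$ a parabolic that is in general not a Borel; one fixes a Borel $B \subset P_{\overline{k}}$ and decomposes each orbit $O_{i,\overline{k}}$ into finitely many $B \times B^-$-orbits indexed by cosets of the relative Weyl group inside the absolute one.

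For the forward implication, I would first read off $I_1 \subset I_2$ from the reverse-inclusion closure order on the $G \times G$-orbit stratification of $\overline{G}$, since the base point $b_i$ determines the stratum labelled by $I_i$; the geometric refinement \Cref{introgeometrization} ensures that this extraction is unambiguous. Within a fixed $G \times G$-orbit, I would choose a Tits-style lift of the relative Weyl group compatible with the representatives used in \Cref{introBruhatdecomposition} and apply Springer's closure formula to the distinguished Borel strata obtained from the lifts of $\sigma_i\rho_i$ and $\tau_i$. Regrouping Springer's absolute Bruhat-type conditions with respect to the parabolic subgroup $W_{I_2}$ then produces the factorisation with $v \in W_{I_2} \cap W^{I_1}$ and $u \in W_{I_1}$, the two inequalities $\sigma_1\rho_1 u \geq \sigma_2\rho_2 v$ and $\tau_1 \geq \tau_2 v u^{-1}$, and finally the length identity $l(\rho) = l(\rho v) + l(v)$, which encodes precisely that the limit lies in the stratum labelled by $I_1$ and does not drop into a deeper boundary stratum.

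For the reverse implication, given $(u,v)$ satisfying the three conditions, I would construct an explicit degeneration: starting from a point of $O_2$, I apply a suitably chosen one-parameter subgroup of $S$ whose limit sits in $O_1$, using the standard Bruhat closure in the Levi factor indexed by $I_2$ combined with the length identity to prevent the limit from escaping into a deeper stratum. This reduces to known Bruhat-closure computations inside the open affine cell of $\overline{G}$ around $b_2$ provided by the local structure of the wonderful compactification.

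The main obstacle will be the precise comparison between absolute and relative Weyl-group combinatorics: Springer's theorem is phrased in the absolute root system, whereas the data in our statement live in the relative Weyl group $W$, and a priori only an inclusion relating the two sets of conditions is visible. One must verify that the three relative conditions cut out \emph{exactly} the double cosets predicted by Springer's absolute formulation, neither more nor less; this is a careful coset analysis that exploits the compatibility of the Tits lift with the base points $b_i$. Once this dictionary is established, both directions become essentially a transcription of Springer's algebraically closed result to the relative setting.
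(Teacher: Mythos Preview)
Your route is genuinely different from the paper's. The paper does \emph{not} base-change and quote Springer; it works directly over $k$ and splits the argument into two pieces. First, for orbits in the same $G\times_kG$-stratum $\overline{X_J}$ (\Cref{closureinsamegorbit}), it writes every closure $\overline{(P\times_kP^-)(\sigma\rho,\tau)b_J}$ as an iterated rank-one saturation $Q_1\cdots Q_n\cdot\overline{(P\times_kP^-)(w_0,w_0w_{0,J})b_J}$ using the explicit description of $P_\alpha\times_kP^-$-actions (\Cref{actrk1para}), and then uses a $k$-cancellativity property (\Cref{existkdensesub}) to turn inclusion into a subword condition. Second, for the passage between strata (\Cref{irrecompofintersection}), it identifies the irreducible components of $\mathcal{C}(Y\cap\overline{X_I})$ by induction and a proper-intersection argument. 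Springer's result is recovered as the special case $k=\overline{k}$, not used as input.

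Your proposal has a real gap exactly at the step you label the ``main obstacle'', and you do not close it. Two concrete problems. First, for non-quasi-split $G$ the relative Weyl group $W=N_G(S)/Z_G(S)$ is only a \emph{subquotient} of $\widetilde{W}$ (quotient of the normalizer of $\widetilde{\Phi}_0$ by $\widetilde{W}_0$), so there is no canonical Tits section $W\hookrightarrow\widetilde{W}$; lifting $\sigma_i,\rho_i,\tau_i$ produces elements only well-defined modulo $\widetilde{W}_0$, and identifying the open $B\times B^-$-cell in $(O_i)_{\overline{k}}$ already requires disentangling this $\widetilde{W}_0$-ambiguity. Second, and more seriously, Springer's criterion is an \emph{existential} statement: there exist $\widetilde u\in\widetilde{W}_{\widetilde{I}_1}$, $\widetilde v\in\widetilde{W}_{\widetilde{I}_2}\cap\widetilde{W}^{\widetilde{I}_1}$ satisfying coupled absolute Bruhat inequalities. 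Passing from existence of such $(\widetilde u,\widetilde v)$ to existence of $(u,v)\in W_{I_1}\times(W_{I_2}\cap W^{I_1})$ satisfying the relative inequalities is not a formal consequence of the Borel--Tits compatibility of Bruhat orders, because the constraints on $\widetilde u,\widetilde v$ are coupled through $\tau_2\widetilde v\widetilde u^{-1}$ and the length condition. Establishing that dictionary is essentially equivalent in difficulty to proving the theorem; as written, your reduction defers the entire content to an unproved combinatorial lemma.
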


Finally, when the base field $k$ is a topological ring which is endowed with a non-discrete topology $\mathcal{E} $ satisfying
the separation axiom $T_1$, we show the following result as a generalization of \Cref{RelativeBruhatforgroup} (4).

\begin{theorem}(\Cref{theoremtopologicalclosure})
    Let $O\subset G(k)$ be a $P(k)\times P^-(k)$-orbit. Then the closure of $O(k)$ with respect to the topology of $\overline{G}(k)$ induced from that of $k$ equals to the relative Zariski closure $\overline{O}\bigcap \overline{G}(k)$, where the bar indicates taking Zariski closure in $\overline{G}$.
\end{theorem}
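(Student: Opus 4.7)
The plan is to establish the two inclusions separately. The inclusion of the topological closure of $O$ into $\overline{O}\cap\overline{G}(k)$ will follow immediately from continuity of polynomial maps: since $k$ is a topological ring, every regular map $\overline{G}\to\mathbb{A}^1$ restricts on an affine chart to a continuous map on $k$-points, so Zariski closed subvarieties of $\overline{G}$ have topologically closed $k$-point sets in the topology induced from $\mathcal{E}$. Applied to $\overline{O}\subseteq\overline{G}$, this places the topological closure of $O$ inside $\overline{O}\cap\overline{G}(k)$.

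For the reverse inclusion, we will use the orbit decomposition of the Zariski closure furnished by \Cref{relbruhatmainthm}, writing $\overline{O}\cap\overline{G}(k)=\bigsqcup_{j}O_{j}$ with each $O_{j}$ a $P(k)\times P^{-}(k)$-orbit and $O=O_{0}$. The $P(k)\times P^{-}(k)$-action on $\overline{G}(k)$ is continuous in the induced topology and stabilises $O$, so it is enough, for each $j$, to exhibit a single well-chosen point of $O_{j}$ as a topological limit of points of $O$. The natural choice is the base point $x_{j}:=(\sigma_{j}\rho_{j},\tau_{j})\cdot b_{j}$ provided by \Cref{introBruhatdecomposition}.

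The key step will be to produce, for each $j$, a $k$-morphism $\phi_{j}:\mathbb{A}^{1}_{k}\to\overline{G}$ satisfying $\phi_{j}(0)=x_{j}$ and $\phi_{j}(\mathbb{G}_{m}(k))\subseteq O$. Granted such $\phi_{j}$, continuity yields $\phi_{j}(t)\to x_{j}$ as $t\to 0$ in $k$, and the non-discreteness plus $T_{1}$ hypotheses on $\mathcal{E}$ guarantee that every neighbourhood of $0$ in $k$ meets $\mathbb{G}_{m}(k)=k\setminus\{0\}$ (since every non-empty open subset of a non-discrete $T_{1}$ topological ring is infinite); hence $x_{j}$ is a topological limit of a net in $\phi_{j}(\mathbb{G}_{m}(k))\subseteq O$, closing the argument. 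To build $\phi_{j}$ we will use the base point data from \Cref{introBruhatdecomposition}: $b_{j}$ is realised as $\lim_{t\to 0}\lambda_{j}(t)$ for a cocharacter $\lambda_{j}\in X_{\ast}(S)$ defined over $k$, which gives a canonical $\mathbb{A}^{1}$-curve through $b_{j}$. Combining this cocharacter with the $G\times G$-action by $(\sigma_{j}\rho_{j},\tau_{j})$ and multiplying by an element of $P(k)\times P^{-}(k)$ determined by the combinatorial relation between $O$ and $O_{j}$ (as encoded in \Cref{relbruhatmainthm}) is expected to produce the required curve.

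The main obstacle will be ensuring that the generic image of $\phi_{j}$ lands in the specific $k$-orbit $O$ and not merely in the enclosing geometric orbit, a point trivial over algebraically closed $k$ (which is why the classical version is due to Brion and Springer alone) but subtle in general because of Galois cohomology. The plan is to handle this by a case analysis. When $O$ and $O_{j}$ share a $G\times G$-stratum of $\overline{G}$, the construction can be performed inside the stratum and reduces to the classical Borel--Tits argument of \Cref{RelativeBruhatforgroup}(4) via the $\mathrm{SL}_{2}$-subgroups associated with the simple reflections relating $O$ and $O_{j}$; when $O$ and $O_{j}$ lie in different $G\times G$-strata, the cocharacter $\lambda_{j}$, suitably modified by the base point data of $O$, supplies the transverse degeneration, and the Bruhat-style product decomposition of the $P\times P^{-}$-orbits in the wonderful compactification keeps the combined curve inside $O$ for generic $t$.
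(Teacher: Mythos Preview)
Your plan matches the paper's proof closely: the paper also splits into the trivial inclusion (topology finer than Zariski) and the reverse, handles the latter orbitwise using the decomposition from \Cref{relbruhatmainthm}, and treats the two cases (same $G\times_kG$-stratum versus different strata) by, respectively, root-group curves coming from simple reflections (Lemma~6.2/Corollary~6.3) and a cocharacter curve $\lambda\in X_\ast(S)$ landing at the base point (Lemma~6.4). One adjustment: your framing promises a \emph{single} morphism $\phi_j:\mathbb{A}^1_k\to\overline{G}$ hitting $x_j$ with generic image in $O$, but the paper does not (and need not) do this; for within-stratum degenerations it chains one $U_{-(\alpha)}$-curve per simple reflection and passes by transitivity of closure, and for a general $O_j$ in a smaller stratum it first uses the cocharacter to reach the base point of an \emph{irreducible component} $(\sigma\rho v,\tau v)\cdot b_I$ of $\overline{O}\cap\overline{X_I}$ and then applies the within-stratum argument from there. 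Your final paragraph already contains both ingredients, so this is a matter of packaging rather than a gap; also, the Galois-cohomology worry you flag does not arise, since all curves used are built from $k$-split data ($U_{(\alpha)}$ is a $k$-affine space and $\lambda\in X_\ast(S)$), and \Cref{kptsoforbit} guarantees $O(k)$ is a single $P(k)\times P^-(k)$-orbit.
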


\subsection{Conventions and notation}

A topological space $Y$ satisfies \emph{$T_1$ axiom} if, for any two distinct point $x,y \in Y$, there exist two open subsets $U, V\subset Y$ such that $x\in U, y\in V, x\notin V$ and $y\notin U$.

Let $X$ be an algebraic variety over a field $k$, and let $W,Z\subset X$ be two subvarieties. For an irreducible component $C$ of $W\bigcap Z$, we say that $W$ and $Z$ \emph{intersect properly} along $C$ in $X$ if 
$$\dim(C)+ \dim(X)= \dim(W)+\dim(Z).$$
We say that $W$ and $Z$ intersect properly in $X$ if $W$ and $Z$ intersect properly along all irreducible components of $W\bigcap Z$. 

An algebraic variety $X$ over a field $k$ is called \emph{$k$-dense} if the set of $k$-points of $X$ is Zariski dense in $X$.

For a $k$-morphism $f:\mathbb{G}_{m,k}\coloneq \Spec(k[x, x^{-1}])\rightarrow X$, if $f$ (necessarily uniquely) extends to a $k$-morphism $\mathbb{A}_{1,k}\rightarrow X$, we denote the image of the origin point by $\lim\limits_{x \to 0} f(x)$. 

For an affine scheme $X\coloneq \Spec(R)$ and an ideal $I\subset R$, we denote by $V_X(I)$ the closed subscheme of $X$ defined by $I$.

Let $G$ be an affine algebraic group over a field $k$, we denote by $\mathscr{R}_u(G)$ its ($k$-)unipotent radical.

For an algebraic group $G$ together with a right action on an algebraic variety $X$ over a field $k$ and a left action on an algebraic variety $Y$, we let $X\times^G_k Y$ be the contracted product, i.e., the quotient of $X\times_k Y$ by the action by $g\cdot(x,y)=(x\cdot g^{-1},g\cdot y)$.

\subsection{Acknowledgments}

The second-named author thanks Bertrand Rémy and Ning Guo for helpful conversations. The first-named author is supported by NSFC, Grant No. 12571027. The second-named author has received funding from National Key R\&D Program of China (grant 2024YFA1014700).

\section{Wonderful compactification}\label{sectionwonderfulcompactification}

In this section, we first fix some group theoretic notations in \Cref{groupsetup}. Then, we recollect some fundamental facts of wonderful compactification for split groups in \Cref{sectionsplitcase} which will be heavily used in the rest of this paper. We also review the Galois descent for wonderful compactification in order to compactify non-split groups in \Cref{galoisdescent}.

\subsection{Group setup}\label{groupsetup}

Let $G$ be an adjoint reductive group over a (not necessarily algebraically closed) field $k$. Let $k_s\subset \overline{k}$ be the separable closure of $k$ which is contained in an algebraic closure of $k$, and let $\Gamma\coloneq \Gal(k_s/k)$. Then $G_{k_s}$ is a split adjoint reductive group, see, for instance, \cite[exposé~XXII, corollaire~2.3]{SGA3III}.

\subsubsection{Absolute root system}
We fix a maximal $k$-split torus $S\subset G$.
Let $T\subset G$ be a maximal torus containing $S$ (which exists by Grothendieck's \cite[exposé~XIV, théorème~1.1]{SGA3II}). We denote the absolute root system of $G$ by 
$$\widetilde{\Phi}\subset X^{\ast}(T)\coloneq \Hom_{k_s-\Grp}(T_{k_s},\mathbb{G}_{m,k_s}).$$
The Galois group $\Gamma$ naturally acts on $X^{\ast}(T)$ so that $(\gamma\cdot \alpha)(t)= \gamma(\alpha(\gamma^{-1}t))$ for $\alpha\in X^{\ast}(T)$, $\gamma\in \Gamma$ and $t\in T(k_s)$. Let $\widetilde{W}\coloneq N_{G_{k_s}}(T_{k_s})/T_{k_s}$ be the absolute Weyl group of $G$.

Let $P\subset G$ be a minimal parabolic subgroup containing $S$. We choose a Borel subgroup $B\subset P_{k_s}$ such that $T_{k_s}\subset B$. Such choice of $B$ gives rise to a set of simple roots and the positive system 
$$\widetilde{\Delta}\subset \widetilde{\Phi}^+ \subset \widetilde{\Phi}.$$
Then, by \cite[corollaire~4.16]{boreltits}, we have the Levi decomposition
$$P=Z_G(S) \ltimes U^+,$$
where $U^+$ is the unipotent radical of $P$. We let $B^-\supset T_{k_s}$ be the opposite Borel subgroup of $B$ and $P^-\subset G$ be the opposite parabolic subgroup of $P$ such that $B^-\subset P^-_{k_s}$.

\subsubsection{Relative root system}
We denote the relative root system of $G$, the positive system corresponding to $P$ and the set of simple roots by
$$\Phi=\Phi(G,S)\supset \Phi^+\supset \Delta.$$

We define 
$$\widetilde{\Delta}_0\coloneq \{r\in \widetilde{\Delta}\vert\; r\vert_{S}=0\}\;\;\text{and}\;\;\widetilde{\Phi}_0\coloneq\{r\in\widetilde{\Phi}\vert\; r\vert_{S}=0\}.$$
Then $\widetilde{\Phi}_0$ is the root system of $Z_G(S)_{k_s}$ with respect to the maximal torus $T_{k_s}$ (\cite[Proposition~15.5.3 (ii)]{linearalgrpSpringer}).
Let $\widetilde{W}_0\subset \widetilde{W}$ be the subgroup generated by the simple reflections defined by $\widetilde{\Delta}_0$.

For a subset $J\subset \Delta$, let $\Phi^+_J\coloneq (\sum_{\alpha\in J}\mathbb{Z}_{\geq 0}\alpha)\bigcap \Phi^+
$ and $\Phi^-_J\coloneq (\sum_{\alpha\in J}\mathbb{Z}_{\leq 0}\alpha)\bigcap \Phi^-
$, where $\Phi^-\coloneq -\Phi^+$. We will denote by $P_J$ (resp., $P_J^{-}$) the parabolic subgroup of $G$ whose Lie algebra is direct sum of weight subspaces of the Lie algebra of $G$ of weights $\Phi^+\coprod \Phi^-_J\coprod \{0\}$ (resp., $-\Phi^+\coprod\Phi^+_J\coprod\{0\}$) with respect to the adjoint action of $S$. Let $L_J\subset P_J$ be the Levi subgroup containing $S$ such that its root system has $J$ as a basis, and let $Z_J$ be the center of $L_J$. Let $P_J^-$ be the opposite parabolic subgroup so that $P_J\bigcap P_J^-=L_J$.

For any $\alpha\in \Phi$, let $(\alpha)\coloneq\mathbb{Z}_{>0}\alpha\bigcap \Phi$. Then we have either $(\alpha)=\{\alpha\}$ or $(\alpha)=\{\alpha,2\alpha\}$. Let $\Phi_{nd}^{\pm}\subset\Phi^{\pm}$ be the subsets that consist of all non-divisible roots, namely
\[\Phi_{nd}^{\pm}\coloneq\{\alpha\in \Phi^{\pm}| \ \alpha/2\notin \Phi \}.\]
Then we have $\Phi^{\pm}=\bigcup_{\alpha\in \Phi_{nd}^{\pm}}(\alpha)$. For any $\alpha\in \Phi$, let $U_{(\alpha)}$ be the unipotent subgroup determined by $\alpha$, c.f., \cite[21.9 Proposition]{Borellinearalgebraicgroup}. 

We introduce the relative Weyl group $W\coloneq N_G(S)/Z_G(S)$. We remark that the group $W$ can be realized as a subquotient of the absolute Weyl group of $\widetilde{W}$, and that $W$ is a subgroup of $\widetilde{W}$ if $G$ is quasi-split, see \cite[\S~6.10]{boreltits}. For any $\alpha\in \Delta$, we denote by $s_{\alpha}$ the corresponding simple reflection in $W$. Let $l: W\rightarrow \mathbb{Z}$ be the length function, and let $w_0\in W$ be the longest element. Let $\leq$ denote the usual Bruhat order on $W$.

For a subset $J\subset \Delta$, let $W_J\subset W$ be the subgroup generated by the simple reflections given by $J$. Let $w_{0, J}$ be the longest element in $W_J$. For any $w\in W$ and $J\subset \Delta$, let $w^{J}$ be the unique element of smallest length in the coset $w\cdot W_J$ (\cite[\S~21.21]{Borellinearalgebraicgroup}). We denote by $W^J$ the set of $w^J$ ($w\in W$). Then we have 
\begin{equation}\label{equationofWJ}
    W^J=\{w\in W\vert w(J)\subset \Phi^+\}.
\end{equation}

%For $I,J\subset \Delta$, we let $^JW^I\coloneq ^JW\bigcap W^I$. 

\subsubsection{$\ast$-action}\label{staraction}
We recall the definition of the $\ast$-action defined by Borel and Tits in \cite[\S~6.2]{boreltits}. 

For an element $\gamma\in \Gal(k_s/k)$, $\gamma\cdot \widetilde{\Phi}^+$ is again a positive system of roots for $\widetilde{\Phi}$. By \cite[Theorem~8.2.8]{linearalgrpSpringer}, there is a unique $w_{\gamma}\in \widetilde{W}$ such that $w_{\gamma}\cdot\gamma\cdot \widetilde{\Phi}^+=\widetilde{\Phi}^+$. We define the $\ast$-action of $\Gal(k_s/k)$ on $\Delta$ by 
$$\gamma\ast r\coloneq w_{\gamma}\cdot \gamma\cdot r,\; r\in\Delta.$$

\begin{remark}
    An intrinsic way to express $\ast$-action is to introduce the notion of a scheme of Dynkin diagram, see \cite[Remark~7.1.2]{redctiveconrad} and \cite[exposé~XXIV, \S~3]{SGA3III}. 

    Since $Z_G(S)$ normalizes $U^+$, $\widetilde{W}_0$ stabilizes $\widetilde{\Phi}^+\backslash \widetilde{\Phi}_0$. By combining with the fact that $\gamma\cdot \widetilde{\Phi}_0=\widetilde{\Phi}_0$ and $\gamma\cdot (\widetilde{\Phi}^+\backslash \widetilde{\Phi}_0)=\widetilde{\Phi}^+\backslash \widetilde{\Phi}_0$, we have 
$w_{\gamma}\in \widetilde{W}_0.$

If $G$ is quasi-split over $k$, then the $\ast$-action is simply the natural action of $\Gal(k_s/k)$ on $\widetilde{\Delta}$. 
\end{remark}

\subsection{Split case}\label{sectionsplitcase} We now review the constructions and some properties of wonderful compactification for split reductive groups of adjoint type. 

\subsubsection{Construction}Let $G^{\text{sc}}\rightarrow G_{k_s}$ be the simply connected cover, and let $T^{\text{sc}}\subset G^{\text{sc}}$ be the preimage of $T_{k_s}$.
Given a regular dominant weight $\lambda\in X^{\ast}(T^{\text{sc}})$ (with respect to $B$), by \cite[Lemma~6.1.1 and Remark~6.1.2]{BrionKumar}, we can fix an algebraic finite-dimensional representation $V_{\lambda}$ of $G^{\text{sc}}$ such that 
\begin{itemize}
    \item the $\lambda$-weight subspace is of dimension 1;
    \item any weight of $T^{\text{sc}}$ other than $\lambda$ in the representation $V_{\lambda}$ is lower than $\lambda$;
    \item for any simple root $\alpha\in\widetilde{\Delta}$, the weight space of $\lambda-\alpha$ is nonzero.
\end{itemize}
By the adjointness of $G$ and the regularity of $\lambda$ (see \cite[Lemma~6.1.3]{BrionKumar}), the group $G_{k_s}$ is embedded into  $\mathbb{P}(\End(V_{\lambda}))$ in such a way that the following square commutes:
$$\xymatrix{
G_{\text{sc}} \ar[d] \ar[r]
&\End(V_{\lambda})\ar[d]\\
G_{k_s} \ar@{^{(}->}[r]         &\mathbb{P}(\End(V_{\lambda})).}$$
We define the wonderful compactification $\overline{G_{k_s}}$ of $G_{k_s}$ to be the closure of $G_{k_s}$ in $\mathbb{P}(\End(V_{\lambda}))$. The $(G_{k_s}\times G_{k_s})$-action on $G_{k_s}$ naturally extends to $\overline{G_{k_s}}$. By \cite[Theorem~6.1.8 (iv)]{BrionKumar}, the wonderful compactification $\overline{G_{k_s}}$ does not depend on the choice of $\lambda$ and $V_{\lambda}$. 

Two different constructions of the compactification $\overline{G_{k_s}}$ via Grassmannian varieties and via Hilbert schemes are given in \cite{completesymmetricvarieties} and \cite{compactificationHilbertsch} respectively.

\subsubsection{Properties of\; $\overline{G_{k_s}}$}\label{splitproperties}
The important features of the wonderful compactification $\overline{G_{k_s}}$ (see, for instance, \cite[\S~1.1]{intersectioncohomologyofBorbitclosures} \cite[6.1.8 Theorem]{BrionKumar}) which will be used in this paper are 
\begin{itemize}
    \item[(1)] The immersion $T_{k_s}\longrightarrow \overline{G_{k_s}}$ factors through a subvariety $\widetilde{T_{k_s}}\coloneq \prod_{\widetilde{\Delta}}\mathbb{A}_{1,k_s}$ of $\overline{G_{k_s}}$ and the morphism $T_{k_s}\longrightarrow \widetilde{T_{k_s}}$ is given by $\widetilde{\Delta}$. 
    \item[(2)] For a subset $\widetilde{I}\subset \widetilde{\Delta}$, let $z_{\widetilde{I}}\in \overline{T}$ be the point whose $i$-component is $0$ if $i\in \widetilde{I}$ and is $1$ if $i\notin \widetilde{I}$. Let $X_{\widetilde{I}}\coloneq (G_{k_s}\times_{k_s}G_{k_s})\cdot z_{\widetilde{I}}$. Then every $G_{k_s}\times_{k_s}G_{k_s}$-orbit of $\overline{G_{k_s}}$ is of this form. Moreover, we have 
    $$\Stab_{G_{k_s}\times_{k_s}G_{k_s}}(z_{\widetilde{I}})=(\mathscr{R}_u(P^-_{\widetilde{I}})\times_{k_s}\mathscr{R}_u(P_{\widetilde{I}}))\cdot (Z_{\widetilde{I}}\times\{1\})\cdot \Diag(L_{\widetilde{I}}).$$
    For $\widetilde{I}, \widetilde{J} \subset \widetilde{\Delta}$, we have  $X_{\widetilde{I}}\subset \overline{X_{\widetilde{J}}}$ if and only if $\widetilde{I}\subset \widetilde{J}$. 
    \item[(3)] Let $P_{\widetilde{I}}\subset G_{k_s}$ be the standard parabolic subgroup defined by $\widetilde{I}\subset\widetilde{\Delta}$, let $P_{\widetilde{I}}^-\supset B^-$ be the opposite parabolic subgroup so that $L_{\widetilde{I}}$ is a Levi subgroup of both $P_{\widetilde{I}}$ and $P_{\widetilde{I}}^-$ and let $Z_{\widetilde{I}}$ be the center of $L_{\widetilde{I}}$. Then the choice of the base point $z_{\widetilde{I}}$ of $X_{\widetilde{I}}$ in (2) gives an isomorphism
    $$X_{\widetilde{I}}\cong (G_{k_s}\times G_{k_s})\times^{P_{\widetilde{I}}^-\times P_{\widetilde{I}}} L_{\widetilde{I}}/Z_{\widetilde{I}}.$$
    This isomorphism further extends to an isomorphism
    $$\overline{X_{\widetilde{I}}}\cong (G_{k_s}\times G_{k_s})\times^{P_{\widetilde{I}}^-\times P_{\widetilde{I}}} \overline{L_{\widetilde{I}}/Z_{\widetilde{I}}},$$
    where the first bar indicates taking closure in $\overline{G}$ and $\overline{L_{\widetilde{I}}/Z_{\widetilde{I}}}$ is the wonderful compactification of the adjoint reductive group $L_{\widetilde{I}}/Z_{\widetilde{I}}$.
\end{itemize}

\subsection{Galois descent}\label{galoisdescent}
We fix a split reductive group $G_0$ over $k$ and an isomorphism $G_{k_s}\cong(G_0)_{k_s}$ of $k_s$-groups.
Let $\overline{(G_0)_{k_s}}$ be the wonderful compactification of $(G_0)_{k_s}$ which we recall in \Cref{sectionsplitcase}. 
By, for instance, \cite[11.3.3~Proposition]{linearalgrpSpringer}, the isomorphism we fixed gives rise to a $1$-cocycle 
$$c\colon \Gamma\longrightarrow \Aut_{k_s-\Grp}((G_0)_{k_s})\in Z^1(\Gamma, \Aut_{k_s-\Grp}((G_0)_{k_s})).$$
By \cite[exposé~XXIV, théorème~1.3]{SGA3III}, we have the decomposition
$$\Aut_{k_s-\Grp}((G_0)_{k_s})\cong G_0(k_s) \rtimes \Out((G_0)_{k_s}),$$
where $\Out((G_0)_{k_s})$ is the outer automorphism group.

By \cite[\S~6.2]{li2023equivariant}, the $1$-cocycle $c$ extends to a (necessarily unique) $1$-cocycle 
$$\overline{c}\colon \Gamma\longrightarrow \Aut_{k_s}(\overline{(G_0)_{k_s}})\in Z^1(\Gamma, \Aut_{k_s}(\overline{(G_0)_{k_s}})).$$ 
See also \cite[\S~5.5]{rationalpointsoncompactification} for another description of this extension via representation theory in characteristic zero.
Since $\overline{(G_0)_{k_s}}$ is (quasi-)projective over $k_s$, it descends to a smooth projective $k$-variety $\overline{G}$ containing $G$ as an open dense subvariety.

\section{Relative Bruhat decomposition}\label{sectionbruhatdecomposition}
We shall keep the notations of \Cref{sectionwonderfulcompactification}.
The main goal of this section is to decompose $\overline{G}(k)$ into disjoint union of $P(k)\times P^-(k)$-orbits. We first decompose $\overline{G}(k)$ into disjoint union of $G(k)\times G(k)$-orbits in \Cref{propositionofbasepoint}. Then we decompose each $G(k)\times G(k)$-orbit in \Cref{borbitproposition}.

\subsection{$G(k)\times G(k)$-orbits}
We begin with the following lemma which is essentially a generalization of \cite[Lemma~2.2] {subvarietiesofgroupcompactification} with the special attention to rational points and general base field, whereas \emph{loc. cit.} is stated for an algebraically closed base field.

\begin{lemma}\label{lemmaofvaluation}
    For a closed subvariety $Z\subset G$, let $\overline{Z}$ be the closure of $Z$ in $\overline{G}$. Then, a point $x\in \overline{G}(k)$ belongs to $\overline{Z}(k)$ if and only if there exist a finite field extension $k'/k$ and a point $\theta\in Z(k'((t)))$ such that the unique extension $\theta'\in \overline{G}(k'[[t]])$ of $\theta$ specializes to $x$. For the “only if” part, if $\overline{Z}$ is further assumed to be $k$-smooth at $x$, then $k'$ could be $k$.
\end{lemma}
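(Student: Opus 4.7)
For the \emph{``if''} direction: since the generic point of $\Spec k'[[t]]$ maps into $Z\subset\overline{Z}$ and $\overline{Z}$ is closed in $\overline{G}$, the entire image of $\theta'$ lies in $\overline{Z}$, and the closed-point specialization gives $x\in\overline{Z}(k)$.

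For the \emph{``only if''} direction with arbitrary $k'$: after decomposing $Z$ into irreducible components (and handling the trivial case $x\in Z$ by taking $\theta=x$ via $k\hookrightarrow k((t))$), the goal reduces to producing, for $\overline{Z}$ irreducible, a closed irreducible curve $C\subset\overline{Z}$ through $x$ with $C\cap Z\neq\emptyset$. I would obtain $C$ by standard projective techniques: embed $\overline{Z}$ projectively, then take an iterated generic linear section through $x$, using $\dim(\overline{Z}\setminus Z)<\dim\overline{Z}$ to force $C\not\subset\overline{Z}\setminus Z$. Let $\widetilde{C}\to C$ be the normalization and $y\in\widetilde{C}$ a point above $x$; then $\mathcal{O}_{\widetilde{C},y}$ is a DVR with residue field $k'\coloneq\kappa(y)$ finite over $k$ and completion $k'[[t]]$. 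The resulting morphism $\Spec k'[[t]]\to\widetilde{C}\to C\hookrightarrow\overline{Z}\hookrightarrow\overline{G}$ sends the closed point to $x$ and the generic point to the generic point of $C$, which lies in $C\cap Z$.

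For the \emph{smooth refinement} ($k'=k$): since $\overline{Z}$ is smooth at $x\in\overline{Z}(k)$ it is locally irreducible at $x$, so I may pass to the component through $x$ and assume $\overline{Z}$ irreducible. By Cohen's structure theorem $\widehat{\mathcal{O}}_{\overline{Z},x}\cong k[[t_1,\ldots,t_d]]$ where $d=\dim_x\overline{Z}$. Let $J\subset\mathcal{O}_{\overline{Z},x}$ be the local ideal of $\overline{Z}\setminus Z$, which is nonzero. It suffices to produce a continuous $k$-algebra homomorphism $\phi\colon k[[t_1,\ldots,t_d]]\to k[[t]]$, necessarily of the form $t_i\mapsto f_i(t)\in t k[[t]]$, with $\phi(g)\neq 0$ for some nonzero $g\in J$: the induced $\Spec k[[t]]\to\overline{Z}$ then sends the closed point to $x$ and the generic point out of $V(J)=\overline{Z}\setminus Z$, hence into $Z$. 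I would prove the key claim -- for every nonzero $g\in k[[t_1,\ldots,t_d]]$ there exist $f_1,\ldots,f_d\in t k[[t]]$ with $g(f_1,\ldots,f_d)\neq 0$ -- by induction on $d$: the base case $d=1$ is trivial ($f_1=t$); for $d>1$, factor $g=t_d^{j_0}(g_{(j_0)}+t_d\cdot h)$ with $g_{(j_0)}\in k[[t_1,\ldots,t_{d-1}]]$ nonzero, inductively choose $f_1,\ldots,f_{d-1}$ so that $g_{(j_0)}(f_1,\ldots,f_{d-1})\neq 0$ has $t$-adic order $\nu$, and then take $f_d=t^N$ with $N>\nu$, which makes the leading term $t^{Nj_0+\nu}$ of $\phi(g)$ uncancelled by the correction coming from $h$.

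The main obstacle is precisely this smooth refinement over small (possibly finite) base fields: a naive ``generic $k$-rational line'' through $x$ in $T_x\overline{Z}$ may fail to exist, since over small fields every $k$-rational tangent direction may lie in the tangent cone of $\overline{Z}\setminus Z$ (as already happens over $\mathbb{F}_2$ with $\overline{Z}\setminus Z$ defined by $t_1t_2(t_1+t_2)=0$). The nonlinear substitution $t_d\mapsto t^N$ circumvents this obstruction and is what makes the refinement hold uniformly in $k$.
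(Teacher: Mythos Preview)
Your proposal is correct. The ``if'' direction and the general ``only if'' direction match the paper's approach: both cut $\overline{Z}$ down to a curve through $x$ meeting $Z$, normalize, and pass to the completed local ring at a preimage of $x$. One caveat: your phrase ``iterated generic linear section'' is imprecise when $k$ is finite, since there may be too few $k$-rational hyperplanes through $x$; the paper handles this uniformly by invoking a Bertini-type result valid over arbitrary fields (hypersurfaces of sufficiently high degree), and you should do likewise or at least acknowledge the issue.

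Your smooth refinement, however, is genuinely different from the paper's and worth highlighting. The paper arranges, via the same Bertini input, that the curve $C$ is already $k$-smooth at $x$; then the normalization is a local isomorphism near the preimage of $x$, forcing $k'=k$. You instead bypass the curve entirely in the smooth case: Cohen's structure theorem identifies $\widehat{\mathcal{O}}_{\overline{Z},x}$ with $k[[t_1,\ldots,t_d]]$, and you construct a continuous $k$-homomorphism to $k[[t]]$ not killing a chosen nonzero element of the boundary ideal by the nonlinear substitution $t_i\mapsto t^{N_i}$. Your inductive argument for this last point is clean and, as you correctly note, sidesteps the obstruction that over small fields every $k$-rational \emph{tangent direction} might lie in the tangent cone of $\overline{Z}\setminus Z$. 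The trade-off: the paper's argument is more geometric and treats the general and smooth cases in a single stroke, while yours decouples the two and gives a more self-contained, purely algebraic proof of the smooth refinement that does not depend on any Bertini statement.
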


\begin{proof}
    For the “if” direction, since $\overline{Z}$ is also proper over $k$, the unique extension $\theta'$ must lie in $\overline{Z}(k[[t]])$. Hence the specialization $x$ belongs to $\overline{Z}(k)$.

    Conversely, suppose that $x\in \overline{Z}(k)$. By Bertini theorem \cite[Proposition~4.1.3]{CKproblemtorsors} (which works without further restriction on the base field $k$), we can find $\dim(Z)-1$ hypersurfaces $H_i\subset \overline{Z}$ for $i=1,...,\dim(Z)-1$ such that (among other things)
    \begin{itemize}
         \item the intersection $C\coloneq \bigcap_{i=1}^{\dim(Z)-1}$ contains $x$;
        \item the intersection $C$ is of pure dimension $1$ and $C\bigcap (\overline{Z}\backslash Z)$ is of dimension zero;
        \item if $\overline{Z}$ is $k$-smooth at $x$, then so is $C$.
    \end{itemize}
    We take $C'$ to be the normalization of an irreducible component of $C$ which contains $x$. Then $C'$ is a projective regular curve over $k$ (\cite[Proposition~6.40, Proposition~12.43]{GortzWedhorn}). We choose a point $x'\in C'$ lying over $x$, and take its complete local ring $\mathfrak{o}$. By, for instance, \cite[Chapter~II, \S~4, Theorem~2]{localfieldsserre}, $\mathfrak{o}$ is isomorphic to the ring $k'[[t]]$ of formal power series where $k'$ is the residue field of $x'$ which is finite over $k$ (\cite[0C45(5)]{stacks-project}). 
    Then, the natural morphism $C'\rightarrow \overline{Z}$ will give rise to the desired section $\theta$. 
    
    When $C$ is $k$-smooth at $x$, then $x$ lies inside the normal locus of the chosen irreducible component of $C$ containing it (\cite[033N]{stacks-project}). Then the residue field $k'$ equals to $k$ because the normalization morphism becomes an open immersion after restricting to an open neighbourhood of $x'$ in $C'$ (\cite[Remark~12.46]{GortzWedhorn}).
\end{proof}

Let $\overline{T_{k_{s}}}\coloneq \bigcup_{w\in \widetilde{W}} (w,w)\cdot \widetilde{T_{k_s}}\subset \overline{G_{k_s}}$ which is the closure of $T_{k_s}$ in $\overline{G_{k_s}}$. Then $\overline{T_{k_{s}}}$ is a smooth toric variety (for $T_{k_s}$) corresponding to the fan of Weyl chambers and their faces, c.f., \cite[Lemma~6.1.6(ii)]{BrionKumar}. We are going to find a subvariety of $\overline{T_{k_{s}}}$ which gives us "base points" of $G(k)\times G(k)$-orbits of $\overline{G}(k)$, just like in the split case, c.f., \Cref{splitproperties} (2). For this, we need the following combinatorial lemma.

\begin{lemma}\label{lemmaoffans}
For $\gamma\in\Gamma$,
    let \[C\coloneq \{\lambda\in X_{\ast}(T)_{\mathbb{R}}\vert \langle \lambda, \alpha\rangle \geq 0 \; \text{for any}\; \alpha\in \widetilde{\Delta}\}\text{ and }C_{\gamma}\coloneq \{\lambda\in X_{\ast}(T)_{\mathbb{R}}\vert \langle \lambda, \gamma\cdot \alpha\rangle \geq 0 \; \text{for any}\; \alpha\in \widetilde{\Delta}\}.\] Then we have
    \begin{equation}\label{equationoffans}
        \begin{aligned}       
           &\{\lambda\in X_{\ast}(T)_{\mathbb{R}}\vert \langle \lambda, \alpha\rangle \geq 0 \; \text{for any}\; \alpha\in \widetilde{\Delta}\backslash \widetilde{\Delta}_0;\;\langle \lambda, \alpha\rangle = 0 \; \text{for any}\; \alpha\in \widetilde{\Delta}_0\}\\
        =&\{\lambda\in X_{\ast}(T)_{\mathbb{R}}\vert \langle \lambda, \gamma\cdot\alpha\rangle \geq 0 \; \text{for any}\; \alpha\in \widetilde{\Delta}\backslash \widetilde{\Delta}_0;\;\langle \lambda, \gamma\cdot\alpha\rangle = 0 \; \text{for any}\; \alpha\in \widetilde{\Delta}_0\},   
    \end{aligned}  
    \end{equation}
    which is, by definition, a face of both $C$ and $C_{\gamma}$.
\end{lemma}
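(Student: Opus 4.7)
The plan is to rewrite each of the two cones using the \emph{full} positive root system instead of just the simple roots, and then invoke the $\gamma$-stability of the subsets $\widetilde{\Phi}_0$ and $\widetilde{\Phi}^+\setminus\widetilde{\Phi}_0$ recorded in the remark of \Cref{staraction}. The key structural input is that $\widetilde{\Phi}_0\subset\widetilde{\Phi}$ is a Levi (parabolic) sub-root-system, so $\widetilde{\Delta}_0=\widetilde{\Delta}\cap\widetilde{\Phi}_0$ is automatically a base of $\widetilde{\Phi}_0$ for the positive system $\widetilde{\Phi}_0^+:=\widetilde{\Phi}_0\cap\widetilde{\Phi}^+$.

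Concretely, since every $\beta\in\widetilde{\Phi}_0$ is a $\mathbb{Z}$-combination of $\widetilde{\Delta}_0$, and every $\beta\in\widetilde{\Phi}^+\setminus\widetilde{\Phi}_0$ is a non-negative integer combination of $\widetilde{\Delta}$ in which at least one element of $\widetilde{\Delta}\setminus\widetilde{\Delta}_0$ has positive coefficient, the left-hand side of \eqref{equationoffans} coincides with
\[
\{\lambda\in X_{\ast}(T)_{\mathbb{R}}\mid \langle\lambda,\beta\rangle=0\ \forall\,\beta\in\widetilde{\Phi}_0,\ \langle\lambda,\beta\rangle\geq 0\ \forall\,\beta\in\widetilde{\Phi}^+\setminus\widetilde{\Phi}_0\}.
\]
Applying the identical argument after replacing $\widetilde{\Phi}^+$ by $\gamma\cdot\widetilde{\Phi}^+$ (with $\gamma\cdot\widetilde{\Delta}$ the corresponding simple system, so that $\gamma\cdot\widetilde{\Delta}_0$ is a base of $\gamma\cdot\widetilde{\Phi}_0=\widetilde{\Phi}_0$) identifies the right-hand side with
\[
\{\lambda\mid \langle\lambda,\beta\rangle=0\ \forall\,\beta\in\widetilde{\Phi}_0,\ \langle\lambda,\beta\rangle\geq 0\ \forall\,\beta\in\gamma\cdot\widetilde{\Phi}^+\setminus\widetilde{\Phi}_0\}.
\]
The two identities $\gamma\cdot\widetilde{\Phi}_0=\widetilde{\Phi}_0$ and $\gamma\cdot(\widetilde{\Phi}^+\setminus\widetilde{\Phi}_0)=\widetilde{\Phi}^+\setminus\widetilde{\Phi}_0$ from the remark give $\gamma\cdot\widetilde{\Phi}^+\setminus\widetilde{\Phi}_0=\widetilde{\Phi}^+\setminus\widetilde{\Phi}_0$ (the $\gamma\cdot\widetilde{\Phi}_0^+$-piece of $\gamma\cdot\widetilde{\Phi}^+$ is entirely discarded), so the two rewritten descriptions agree.

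The face claim is then formal: the common set is obtained from $C$ by upgrading the defining inequalities indexed by $\widetilde{\Delta}_0\subset\widetilde{\Delta}$ to equalities, and symmetrically from $C_{\gamma}$ by upgrading the inequalities indexed by $\widetilde{\Delta}_0$ (via $\alpha\mapsto\gamma\cdot\alpha$) to equalities. The only subtle bookkeeping point—and thus the principal conceptual obstacle—is that $\gamma$ itself generally fails to preserve $\widetilde{\Phi}^+$ (this is precisely why the $\ast$-action is introduced), and the lemma rests on the weaker but sufficient observation that $\gamma$ does preserve the ``outer'' part $\widetilde{\Phi}^+\setminus\widetilde{\Phi}_0$ of the positive system. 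Once one grants this, the argument reduces to the elementary translation between simple-root and full-positive-root descriptions of a face.
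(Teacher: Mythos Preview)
Your argument is correct. Both proofs rest on the same underlying structural facts recorded in the remark of \Cref{staraction}, namely $\gamma\cdot\widetilde{\Phi}_0=\widetilde{\Phi}_0$ and $\gamma\cdot(\widetilde{\Phi}^+\setminus\widetilde{\Phi}_0)=\widetilde{\Phi}^+\setminus\widetilde{\Phi}_0$, but they deploy them differently. The paper works directly at the level of simple roots: for $\alpha\in\widetilde{\Delta}\setminus\widetilde{\Delta}_0$ it invokes the explicit decomposition $\gamma\cdot\alpha=\gamma\ast\alpha+\sum_{\eta\in\widetilde{\Delta}_0}n_\eta\eta$ (from \cite[Corollary~15.5.4]{linearalgrpSpringer}) together with the fact that the $\ast$-action preserves $\widetilde{\Delta}\setminus\widetilde{\Delta}_0$, obtains one inclusion, and then gets the reverse inclusion by replacing $\widetilde{\Delta}$ with $\gamma^{-1}\cdot\widetilde{\Delta}$. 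Your route instead passes once and for all to the description in terms of the full sets $\widetilde{\Phi}_0$ and $\widetilde{\Phi}^+\setminus\widetilde{\Phi}_0$, after which the $\gamma$-invariance of these two sets makes the equality immediate and symmetric. Your approach is slightly cleaner in that it bypasses the $\ast$-action formula and avoids the asymmetric two-step argument; the paper's approach has the advantage of making visible exactly how each individual $\gamma\cdot\alpha$ decomposes in the basis $\widetilde{\Delta}$.
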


\begin{proof}
    Since $\widetilde{\Phi}_0$ is the root system of $Z_G(S)_{k_s}$ with respect to $T_{k_s}$ having $\widetilde{\Delta}_0$ as a set of simple roots (\cite[Proposition~15.5.3~(ii)]{linearalgrpSpringer}),
    for $\alpha\in \widetilde{\Delta}_0$, we have that $\gamma\cdot \alpha$ lies in $\widetilde{\Phi}_0$. 
    On the other hand, if $\alpha\in \widetilde{\Delta}\backslash \widetilde{\Delta}_0$, then, by \cite[Corollary~15.5.4]{linearalgrpSpringer}, we have 
    $$\gamma\cdot \alpha= \gamma\ast \alpha+ \sum_{\eta\in\widetilde{\Delta}_0}n_{\eta}\eta,\; n_{\eta}\in \mathbb{Z}_{\textgreater 0}.$$
   Moreover, according to \cite[Proposition~15.5.3~(i)]{linearalgrpSpringer}, the $\ast$-action stabilizes $\widetilde{\Delta}\backslash \widetilde{\Delta}_0$.
   Hence one inclusion of \Cref{equationoffans} follows. For the other inclusion, it suffices to use the known inclusion by replacing $\widetilde{\Delta}$ by $\gamma^{-1}\cdot \widetilde{\Delta}$.
\end{proof}

We denote the cone appears in \Cref{lemmaoffans} by 
\[C_0\coloneq \{\lambda\in X_{\ast}(T)_{\mathbb{R}}\vert \langle \lambda, \alpha\rangle \geq 0 \; \text{for any}\; \alpha\in \widetilde{\Delta}\backslash \widetilde{\Delta}_0;\;\langle \lambda, \alpha\rangle = 0 \; \text{for any}\; \alpha\in \widetilde{\Delta}_0\}.\] 
Then by the theory of toric varieties (for instance \cite[Proposition~1.3]{toric}), the open subvariety \[\prod_{\widetilde{\Delta}\backslash \widetilde{\Delta}_0} \mathbb{A}_{1, k_s} \times_{k_s}\prod_{\widetilde{\Delta}_0}\mathbb{G}_{m, k_s} \subset \widetilde{T_{k_s}}\] is precisely the toric variety corresponds to $C_0$. By \Cref{lemmaoffans}, we have $\gamma\cdot C_0=C_0$ for any $\gamma\in \Gamma$. Therefore, $\prod_{\widetilde{\Delta}\backslash \widetilde{\Delta}_0} \mathbb{A}_{1, k_s} \times_{k_s}\prod_{\widetilde{\Delta}_0}\mathbb{G}_{m, k_s}\subset \overline{T_{k_s}}$ is a subvariety that is stable under Galois action.

For any $\gamma\in \Gamma$, $\prod_{\widetilde{\Delta}\backslash \widetilde{\Delta}_0} \mathbb{A}_{1, k_s} \times_{k_s}\prod_{\widetilde{\Delta}_0}\mathbb{G}_{m, k_s}$ is embedded, as an open subvariety, into $\prod_{\gamma\cdot \widetilde{\Delta}} \mathbb{A}_{1,k_s}$ by sending (for each $r\in \widetilde{\Delta}$) the $r$-component to the $\gamma\cdot r$-component. This embedding corresponds to the inclusion $C_{\gamma}\subset \gamma\cdot C_0$. For any $r\in \widetilde{\Delta}$ and $\gamma \in \Gamma$, let $\mathbb{X}_{\gamma\cdot r}\in k_s[\gamma \cdot \widetilde{T_{k_s}}]$ be the coordinate corresponding to $\gamma\cdot r$. Then for any $\gamma\in \Gamma$ and $r\in \widetilde{\Delta}$, $\mathbb{X}_{\gamma\cdot r}$ is a well-defined regular function on $\prod_{\widetilde{\Delta}\backslash \widetilde{\Delta}_0} \mathbb{A}_{1, k_s} \times_{k_s}\prod_{\widetilde{\Delta}_0}\mathbb{G}_{m, k_s}$.

 %By the theory of toric varieties (for instance \cite[Proposition~1.3]{toric}), $\gamma\cdot \widetilde{T_{k_s}} \subset \overline{T_{k_s}}$ is the toric subvariety given by the cone $C_{\gamma}$. By the definition of $w_{\gamma}$, c.f., \Cref{staraction}, we have $C_{\gamma}= w_{\gamma}^{-1}C $ hence $\gamma \cdot T_{k_s}=(w_{\gamma}^{-1},w_{\gamma}^{-1})\cdot T_{k_s}$ for any $\gamma\in \Gamma$. We have an isomorphism 
%\[(w_\gamma^{-1},w_{\gamma}^{-1})\cdot \widetilde{T_{k_s}}= \gamma \cdot T_{k_s}\simeq \prod_{\gamma\cdot \widetilde{\Delta}} \mathbb{A}_{1,k_s}. \]
%As a result, $\prod_{\widetilde{\Delta}\backslash \widetilde{\Delta}_0} \mathbb{A}_{1, k_s} \times_{k_s}\prod_{\widetilde{\Delta}_0}\mathbb{G}_{m, k_s}$ is stable under Galois action.

We define the closed subvariety of  $\prod_{\widetilde{\Delta}\backslash \widetilde{\Delta}_0} \mathbb{A}_{1, k_s} \times_{k_s}\prod_{\widetilde{\Delta}_0}\mathbb{G}_{m, k_s}$ by
$$\widetilde{S_{k_s}}\coloneq \Big(\bigcap_{r\in\widetilde{\Delta}_0} V(\mathbb{X}_r-1)\Big)\bigcap \Big(\bigcap_{r\in \widetilde{\Delta}\backslash\widetilde{\Delta}_0}V(\{\mathbb{X}_r-\mathbb{X}_{\sigma\cdot r}\}_{\sigma\in \Gamma})\Big)\subset \prod_{\widetilde{\Delta}\backslash \widetilde{\Delta}_0} \mathbb{A}_{1, k_s} \times_{k_s}\prod_{\widetilde{\Delta}_0}\mathbb{G}_{m, k_s},$$
where $V$ indicates taking the zero locus of the corresponding ideal in $\prod_{\widetilde{\Delta}\backslash \widetilde{\Delta}_0} \mathbb{A}_{1, k_s} \times_{k_s}\prod_{\widetilde{\Delta}_0}\mathbb{G}_{m, k_s}$.
Because the defining ideal is stable under the action of $\Gamma$, $\widetilde{S_{k_s}}$ is stable under Galois action as well.

Then $\widetilde{S_{k_s}}$ descends to a $k$-subvariety $\widetilde{S}\subset \overline{G}$. Since $G$ is adjoint, we have $\mathbb{Z}\Delta=X^{\ast}(S)$. Moreover, since $S_{k_s}$ is dense in $\widetilde{S_{k_s}}$, the embedding $S_{k_s}\hookrightarrow\widetilde{S_{k_s}}$ induced from \Cref{splitproperties}~(1) also descends to a $k$-immersion which fits into the following diagram
$$\xymatrix{
S\ar@{^{(}->}[dd]_{(\alpha)_{\alpha\in\Delta}}  \ar@{^{(}->}[drr]&  &\\
&  &  \overline{G}\\
\widetilde{S}\cong \prod_{\Delta} \mathbb{A}_{1,k}  \ar@{^{(}->}[urr] & &
}$$
where the other two arrows are given by the inclusion $G\subset \overline{G}$ and \Cref{splitproperties} (1). Now we show that $\widetilde{S}$ contains "base points" of $G(k)\times G(k)$-orbits of $\overline{G}(k)$.
The following result is a generalization of \cite[Proposition~A1] {behaviouratinfinityofbruhat} over general base field.

\begin{proposition}\label{propositionofbasepoint}
    For a $G(k)\times G(k)$-orbit $O$ in $\overline{G}(k)$, there are a unique point $b\in O$ and a unique subset $I\subset \Delta$ such that 
    \begin{itemize}
       \item[(1)] $b=\lim_{x\to 0} \lambda(x)\in \widetilde{S}(k)$ for some $\lambda\in X_{\ast}(S)$;
        \item[(2)] $(P\times_k P^-)\cdot b$ is open dense in $(G\times_k G)\cdot b$.
    \end{itemize}
    Moreover, the point $b$ is invariant under the actions of $\Diag(L_I)$ and of $Z_I\times_k Z_I$.
\end{proposition}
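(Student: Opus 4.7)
The plan is to leverage the split case (\Cref{splitproperties}(2)) via Galois descent. Starting from any $x\in O$, I would form $X\coloneq(G\times_k G)\cdot x\subset\overline{G}$, a locally closed $k$-subvariety. Base-changing to $k_s$, the split case identifies $X_{k_s}$ with $X_{\widetilde{I}}$ for a unique $\widetilde{I}\subset\widetilde{\Delta}$, with base point $z_{\widetilde{I}}\in\widetilde{T_{k_s}}$. The Galois stability of $X$ forces $\widetilde{I}$ to be $\ast$-Galois stable. Moreover, the demand that a base point $b$ be of the form $\lim_{t\to 0}\lambda(t)$ for some $\lambda\in X_{\ast}(S)$ forces $\widetilde{I}\cap\widetilde{\Delta}_0=\emptyset$: any such limit has coordinate $1$ on every $\widetilde{\Delta}_0$ entry (since $\langle\lambda,\alpha\rangle=0$ for $\alpha\in\widetilde{\Phi}_0$), and so can coincide with $z_{\widetilde{I}}$ only when $\widetilde{I}\subset\widetilde{\Delta}\setminus\widetilde{\Delta}_0$. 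I would then define $I\subset\Delta$ as the subset corresponding to $\widetilde{I}$ under the bijection between $\ast$-Galois stable subsets of $\widetilde{\Delta}\setminus\widetilde{\Delta}_0$ and subsets of $\Delta$ given by restriction to $S$.

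Next, I would verify that $b\coloneq z_{\widetilde{I}}$ lies in $\widetilde{S}(k)$: its coordinates lie in $\{0,1\}$, and the $\ast$-Galois stability of $\widetilde{I}$ together with \Cref{lemmaoffans} ensures these values satisfy the defining equations of $\widetilde{S}$ (the equations $\mathbb{X}_r=1$ on $\widetilde{\Delta}_0$ and $\mathbb{X}_r=\mathbb{X}_{\sigma\cdot r}$ on $\widetilde{\Delta}\setminus\widetilde{\Delta}_0$ become automatic once the coordinate profile is Galois-compatible). Choosing $\lambda\in X_{\ast}(S)$ with pairings $\langle\lambda,\alpha\rangle$ matching the coordinate profile of $z_{\widetilde{I}}$ yields $b=\lim_{t\to 0}\lambda(t)$, verifying (1). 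Condition (2) then follows because $(P\times P^-)\cdot z_{\widetilde{I}}$ is open dense in $X_{\widetilde{I}}$ over $k_s$ (split case), and open density descends to $k$.

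The crucial remaining step is showing $b\in O$, equivalently that $X(k)$ consists of a single $G(k)\times G(k)$-orbit. My plan is to exploit the structure of $\Stab_{G\times G}(b)$, which by \Cref{splitproperties}(2) is an extension of a Levi-type reductive piece (containing $\Diag(L_{\widetilde{I}})$ together with a $Z_{\widetilde{I}}\times\{1\}$-contribution) by a $k$-split unipotent group (the product of unipotent radicals of opposite relative parabolics). Standard vanishing of $H^1(\Gamma,\cdot)$ for split unipotent groups combined with a transitivity argument on the Levi quotient should yield that every $k$-point of $X$ lies in the $G(k)\times G(k)$-orbit of $b$. As a more hands-on alternative, \Cref{lemmaofvaluation} lets one realize $x$ as the specialization of a formal arc in $X$ starting near $b$, and then transport this arc by a cocharacter-limit argument along $\lambda$ to express $b$ as a $G(k)\times G(k)$-translate of $x$.

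Uniqueness is then straightforward: any $b'\in O$ satisfying (1)--(2) must equal some $z_{\widetilde{J}}\in\widetilde{S}(k)$, and $b, b'\in X_{\widetilde{I}}$ forces $\widetilde{J}=\widetilde{I}$ by the split case, giving $b'=b$ and $I$ uniquely determined. The invariance of $b$ under $\Diag(L_I)$ and $Z_I\times_k Z_I$ follows by direct computation from the split-case stabilizer formula (\Cref{splitproperties}(2)), using the compatibility between the relative Levi $L_I$ and the absolute Levi $L_{\widetilde{I}}$ together with the centralizer-of-$S$ contributions to pass from $L_{\widetilde{I}}$ to $L_I$. The main obstacle throughout is this descent/transitivity step ensuring $b\in O$; it requires careful handling of the Galois cohomology of the non-reductive stabilizer of $b$ (or, equivalently, an explicit rational-movement construction using $P(k)\times P^-(k)$ and the cocharacter $\lambda$).
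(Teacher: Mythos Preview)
Your descent strategy correctly isolates the candidate $b=z_{\widetilde I}$, and you rightly flag the transitivity step $b\in O$ as the crux. But neither of your proposed routes closes the gap. The claim $\widetilde I\cap\widetilde\Delta_0=\emptyset$ is argued circularly: you deduce it from the \emph{requirement} that $b$ arise as an $S$-cocharacter limit, which is precisely the existence assertion in~(1). Galois-stability of the orbit $X$ does not by itself force $\widetilde I$ to avoid $\widetilde\Delta_0$; that is an honest arithmetic constraint on which boundary strata carry $k$-points, and it needs independent proof. The $H^1$ outline for transitivity is also incomplete: the reductive quotient of $\Stab_{G\times G}(b)$ is isomorphic to $Z_I\times L_I$, whose first Galois cohomology is typically non-trivial, and no mechanism is offered to force the relevant fiber of $H^1(k,\Stab(b))\to H^1(k,G\times G)$ to be a singleton. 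The ``hands-on alternative'' invokes \Cref{lemmaofvaluation} but stops short of saying how one extracts a dominant $S$-cocharacter from the resulting arc.

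The paper supplies exactly that missing extraction step: the \emph{Cartan decomposition} $G(k((t)))=\coprod_{\lambda} G(k[[t]])\,\lambda(t)\,G(k[[t]])$ over the Laurent-series field. Starting from $o\in O$, smoothness of $\overline G$ lets \Cref{lemmaofvaluation} produce $x\in G(k((t)))$ specializing to $o$; Cartan then writes $x=g_1(t)\lambda(t)g_2(t)$ with $\lambda\in X_*(S)$ dominant, and passing to $t\to 0$ exhibits $b\coloneq\lim\lambda(t)$ as a $G(k)\times G(k)$-translate of $o$, hence $b\in O$. Dominance of $\lambda$ gives $b\in\widetilde S(k)$ directly, so $\widetilde I\cap\widetilde\Delta_0=\emptyset$ falls out a posteriori rather than needing to be assumed. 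Property~(2) is then verified via the dynamic subgroups $U(\pm\lambda)$, $L(\lambda)$, and uniqueness follows essentially as you indicate.
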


\begin{proof}
    Since $b$ is required to be a limit of a character of $S$, viewed as a point in $\overline{T}(k_s)$, it coincides with a base point in \Cref{sectionsplitcase} (2). Hence the uniqueness follows.

    Let us prove the existence of $b$.
    Take a point $o\in  O$. By \Cref{lemmaofvaluation}, there is a point $x\in G(k((t)))$ such that the unique lifting of $x$ in $\overline{G}(k[[t]])$ specializes to $o$. 
    By Cartan decomposition (see, for instance, \cite[Theorem~6.2.2]{genericallytrivialtorsorsconstant2025}), i.e., $G(k((t)))= \coprod_{\lambda\in X_{\ast}(S)/N_G(S)(k)} G(k[[t]])\cdot\lambda(t)\cdot G(k[[t]])$, we can find $g_1(t), g_2(t)\in G(k[[t]])$ and $\lambda\in X_{\ast}(S)$ such that   
       \begin{equation}\label{equationCartandecomposition}
            x=g_1(t)\cdot \lambda(t)\cdot g_2(t);
        \end{equation}  
        \begin{equation}\label{eqcondition}
          \text{and} \; \langle\lambda, \alpha\rangle\geq 0 \;\text{for any relative simple root }\;\alpha\in \Delta.
        \end{equation}
        
    If we let $b\coloneq \lim_{t\to 0} \lambda(t)$, taking limit at $0$ for \Cref{equationCartandecomposition} shows that $b\in O$. Moreover, since the embedding $S\hookrightarrow \widetilde{S}$ is given by $\Delta$, the fact that the limit $b$ exists in $\widetilde{S}(k)$ is equivalent to \Cref{eqcondition}.

    The assertion (2) can be shown by \Cref{sectionsplitcase} (3) and Galois descent. Here we give a direct proof by following the proof of \cite[Proposition~A1]{behaviouratinfinityofbruhat}. Recall the following dynamic description
    $$U(\lambda)\coloneq \{g\in G\vert \lim_{t\to 0} \lambda(t)\cdot  g \cdot \lambda(t)^{-1}=1 \},$$
    $$L(\lambda)\coloneq\{g\in G\vert \lambda(t)\cdot  g \cdot \lambda(t)^{-1}=g\}.$$
    For $g_1\in U(\lambda), g_2\in U(-\lambda)$, by taking limit of $t$ to $0$ for the following two equalities
    $$\lambda(t)=(\lambda(t)g_1\lambda(t)^{-1})\cdot\lambda(t)\cdot g_1^{-1},\;\;\lambda(t)=g_2\cdot \lambda(t)\cdot (\lambda(t)^{-1}g_2\lambda(t))^{-1},$$
    it follows that $U(-\lambda)\times_k U(\lambda)\subset \Stab_{G\times_k G}(b)$. Note that, by the definition of $L(\lambda)$, we also have $\Diag(L(\lambda))\subset\Stab_{G\times_k G}(b)$. Hence, since $U(\lambda)\times_k L(\lambda)\times_k U(-\lambda)$ is an open subvariety of $G$ (see, for instance, \cite[Theorem~4.1.7~4]{redctiveconrad}),
    the image of the open subvariety 
    $$(P\times_k P^{-})\cdot (U(-\lambda)\times_k U(\lambda))\cdot \Diag(L(\lambda))\subset G\times_k G$$ 
    in $(G\times_k G)\cdot b$, which is equal to $(P\times_k P^-)\cdot b$, is also an open subvariety.
\end{proof}

\Cref{propositionofbasepoint} shows that the $G(k)\times G(k)$-orbits in $\overline{G}(k)$ is parametrized by the set of subsets of $\Delta$. For a subset $I\subset \Delta$, let 
$$b_I\in \widetilde{S}(k)$$ 
whose $i$-component is $0$ if $i\in I$ and is $1$ if $i\notin I$, and let $Y_I\coloneq (G(k)\times G(k)) \cdot b_I$.

As a corollary of \Cref{propositionofbasepoint}, we have the following interesting observation.

\begin{corollary}
    If $G$ is anisotropic, i.e., $G$ does not contain any nontrivial $k$-split torus, then $\overline{G}(k)=G(k)$.
\end{corollary}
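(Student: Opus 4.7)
The plan is to derive this corollary as an immediate consequence of \Cref{propositionofbasepoint}. The key observation is that the hypothesis ``$G$ is anisotropic'' means precisely that the maximal $k$-split torus $S$ is trivial, and therefore the cocharacter lattice $X_{\ast}(S)$ vanishes and the set of relative simple roots $\Delta$ is empty. In particular the subvariety $\widetilde{S}\cong\prod_{\Delta}\mathbb{A}_{1,k}$ collapses to a single $k$-point, namely the identity of $G$.

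To conclude, I would take an arbitrary $x\in\overline{G}(k)$ and consider its $G(k)\times G(k)$-orbit $O\subset\overline{G}(k)$. Applying \Cref{propositionofbasepoint} to $O$ produces a base point $b=\lim_{t\to 0}\lambda(t)$ with $\lambda\in X_{\ast}(S)=0$. Thus $\lambda$ is forced to be the trivial cocharacter, and $b$ must coincide with the identity $e\in G(k)\subset\overline{G}(k)$. Because the two-sided translation action of $G\times G$ on $G$ has the whole group as the orbit of $e$, it follows that $O\subset G(k)$, hence $x\in G(k)$; as $x$ was arbitrary, this yields $\overline{G}(k)=G(k)$.

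No substantive obstacle arises here: the entire content of the corollary sits in the combinatorial fact that $X_{\ast}(S)=0$ as soon as $S=1$. The only point worth double-checking is that \Cref{propositionofbasepoint} applies verbatim in the degenerate case $S=1$, which is clear since its proof proceeds via the Cartan decomposition $G(k((t)))=\coprod_{\lambda\in X_{\ast}(S)/N_{G}(S)(k)}G(k[[t]])\cdot\lambda(t)\cdot G(k[[t]])$, which remains valid (and simply collapses to $G(k((t)))=G(k[[t]])\cdot G(k[[t]])$) in this setting.
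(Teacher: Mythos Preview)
Your proposal is correct and matches the paper's approach: the paper simply records this as an immediate corollary of \Cref{propositionofbasepoint} without giving a separate proof, and the details you supply (trivial $S$ forces $X_{\ast}(S)=0$, hence $\lambda$ is trivial and $b=e$) are exactly what is intended.
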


\subsection{$P(k)\times P^-(k)$-orbits}
Now our goal is to further decompose $Y_I$ into disjoint union of $P(k)\times P^-(k)$-orbits. The strategy of the proof of the following \Cref{borbitproposition} is from \cite[\S~2.1]{behaviouratinfinityofbruhat}.

%For this, we work in a slightly more general situation. 

\begin{proposition}\label{borbitproposition}
    We now consider a $P(k)\times P^-(k)$-orbit $O$ contained in $Y_I$. Then there exist unique $\sigma, \tau\in W^I$ and $\rho\in W_I$ such that 
    $$O=(P(k)\times P^-(k)) (\sigma \rho,\tau)\cdot b_I. \footnote{Since $\sigma ,\rho$ and $ \tau$ are elements in the relative Weyl group $W$, the expression should be understood as the group actions by representatives of these elements in $N_G(S)$. Since $Z_G(S)\subset P\bigcap P^-$, the expression does not depend on the choice of representatives. Similar convention applies to later discussion.}  $$
\end{proposition}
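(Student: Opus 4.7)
The plan is to map $Y_I$ to a product of partial flag varieties and analyze the $P(k) \times P^-(k)$-orbit structure first on the base (yielding the pair $(\sigma, \tau)$) and then on the fibers (yielding $\rho$), by recursively applying relative Bruhat inside the Levi $L_I$.

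First, I would introduce the $G(k) \times G(k)$-equivariant map
\[
\pi\colon Y_I \longrightarrow (G/P^-_I)(k) \times (G/P_I)(k), \qquad (g_1, g_2) \cdot b_I \longmapsto (g_1 P^-_I, g_2 P_I).
\]
This is well-defined because, by \Cref{splitproperties} (2) together with Galois descent, the stabilizer of $b_I$ in $G \times G$ is contained in $P^-_I \times P_I$. Combining \Cref{RelativeBruhatforgroup} (1) for $G$ with the Levi decomposition $P^{\pm}_I = L_I \ltimes \mathscr{R}_u(P^{\pm}_I)$ and the Bruhat decomposition of $L_I(k)$ (with relative Weyl group $W_I$), one obtains
\[
G(k) = \bigsqcup_{\sigma \in W^I} P(k) \cdot \sigma \cdot P^-_I(k), \qquad G(k) = \bigsqcup_{\tau \in W^I} P^-(k) \cdot \tau \cdot P_I(k),
\]
so that the $P(k) \times P^-(k)$-orbits on the image of $\pi$ are uniquely parametrized by pairs $(\sigma, \tau) \in W^I \times W^I$. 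The image of $O$ under $\pi$ thus determines $\sigma$ and $\tau$ uniquely, and modulo the $P(k) \times P^-(k)$-action we may take a representative $(g_1, g_2) \cdot b_I$ of $O$ with $(g_1, g_2) \in \sigma P^-_I(k) \times \tau P_I(k)$.

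Next, I would identify the fiber and its residual stabilizer action. Translating by $(\sigma^{-1}, \tau^{-1})$ reduces to the fiber $\pi^{-1}(P^-_I, P_I) = (P^-_I(k) \times P_I(k)) \cdot b_I$, which by \Cref{splitproperties} (3) and Galois descent is identified with the adjoint group $L_I/Z_I$ via $(h_1, h_2) \cdot b_I \mapsto h_1 h_2^{-1} \bmod Z_I$, where $P^-_I \times P_I$ acts through its projection to $L_I \times L_I$. The residual stabilizer of $(\sigma P^-_I, \tau P_I)$ in $P(k) \times P^-(k)$, namely $(P \cap \sigma P^-_I \sigma^{-1})(k) \times (P^- \cap \tau P_I \tau^{-1})(k)$, then acts on this fiber; using $\sigma, \tau \in W^I$ together with the characterization \Cref{equationofWJ}, a root-theoretic computation shows that its images in $L_I \times L_I$ are precisely $(P \cap L_I)(k) \times (P^- \cap L_I)(k)$, i.e., opposite minimal parabolic subgroups of $L_I$.

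Finally, applying \Cref{RelativeBruhatforgroup} (1) to the adjoint group $L_I/Z_I$ (whose relative Weyl group is $W_I$), the $(P \cap L_I)(k) \times (P^- \cap L_I)(k)$-orbits on $L_I/Z_I$ are uniquely parametrized by $\rho \in W_I$, with representative $\rho \in L_I/Z_I$ corresponding to $\rho \cdot b_I$ in the fiber. Translating back to the fiber over $(\sigma P^-_I, \tau P_I)$, the orbit $O$ therefore has the unique representative $(\sigma, \tau) \cdot (\rho \cdot b_I) = (\sigma \rho, \tau) \cdot b_I$, as desired. The main obstacle is carefully descending the fiber identification from $k_s$ to $k$ and verifying that $P \cap L_I$ is indeed a $k$-rational minimal parabolic of $L_I$ with relative Weyl group $W_I$; this should follow from the compatibility of the $\ast$-action of $\Gamma$ on $\widetilde{\Delta}$ with the decomposition $\widetilde{\Delta} = \widetilde{\Delta}_0 \sqcup (\widetilde{\Delta} \setminus \widetilde{\Delta}_0)$.
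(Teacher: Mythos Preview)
Your proposal is correct and follows essentially the same strategy as the paper: project to $(G/P_I^-)(k)\times (G/P_I)(k)$ and use relative Bruhat on the base to extract $(\sigma,\tau)\in W^I\times W^I$, then identify the fiber with $L_I/Z_I$ via \Cref{splitproperties}~(3) and Galois descent, observe that the residual action factors through $(P\cap L_I)(k)\times(P^-\cap L_I)(k)$ because $\sigma,\tau\in W^I$, and apply relative Bruhat for $L_I/Z_I$ to obtain $\rho\in W_I$. The only point the paper makes slightly more explicit is the final absorption $P(k)\sigma(P(k)\cap L_I(k))=P(k)\sigma$ (and its opposite analogue), which justifies that the representative is exactly $(\sigma\rho,\tau)\cdot b_I$; your concern about $P\cap L_I$ being a minimal parabolic of $L_I$ with relative Weyl group $W_I$ is standard and is what the paper uses without further comment.
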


\begin{proof}
We denote the the natural projection morphism by
    $$\pi_I: (G\times_k G)\times_k^{P_I^-\times_k P_I} \overline{L_I/Z_I}\longrightarrow G/P_I^-\times_k G/P_I.$$
By \Cref{splitproperties} (3) and Galois descent from \Cref{galoisdescent}, 
the wonderful compactification $\overline{L_I/Z_I}$ is identified with a fiber of $\pi_I$ via
\begin{equation}\label{equationoffiber}
    \overline{L_I/Z_I}\cong (P_I^-\times_k P_I)\times_k^{P_I^-\times_k P_I} \overline{L_I/Z_I}\longhookrightarrow (G\times_k G)\times_k^{P_I^-\times_k P_I} \overline{L_I/Z_I}.
\end{equation}

Since $\pi_I$ is $(G\times_k G)$-equivariant, the image $\pi_I(O)\subset (G/P_I^-)(k)\times (G/P_I)(k)$ is still a $P(k)\times P^-(k)$-orbit. Moreover, since the natural maps $G(k)\rightarrow (G/P_I^-)(k)$ and $G(k)\rightarrow (G/P_I)(k)$ are surjective (\cite[Proposition~20.5]{Borellinearalgebraicgroup}), by the relative Bruhat decomposition (\cite[\S~21.16]{Borellinearalgebraicgroup})
$$G(k)/P_I(k)= \coprod_{w\in  W/W_I} P(k)\cdot w\cdot P_I(k)/P_I(k),$$
there exist unique $\sigma, \tau\in W^I$ such that 
$$\pi_I(O)=(P(k)\cdot \sigma\cdot P_I^{-}(k)/P_I^{-}(k))\times (P(k)^{-}\cdot \tau\cdot P_I(k)/P_I(k)).$$

Note that $\pi_I^{-1}(\sigma\cdot P_I^{-}(k)/P_I^{-}(k),\tau\cdot P_I(k)/P_I(k))\bigcap O$ is stable under the action of 
\[(P(k)\times P^-(k))\bigcap (\sigma\cdot P_I^{-}(k)\cdot \sigma^{-1}\times\tau\cdot P_I(k)\cdot \tau^{-1} ),\] which contains $\sigma\cdot (L_I(k)\bigcap P(k))\cdot \sigma^{-1}\times\tau\cdot (L_I(k)\bigcap P^-(k))\cdot \tau^{-1}$ because $\sigma, \tau\in W^I$. Let
$$F\coloneq (\sigma^{-1}, \tau^{-1})\cdot (\pi_I^{-1}(\sigma\cdot P_I^{-}(k)/P_I^{-}(k),\tau\cdot P_I(k)/P_I(k))\bigcap O)\subset Y_I.$$
Then $F$ is contained in $\overline{L_I/Z_I}(k)$ and is stable under $L_I(k)\bigcap P(k)\times L_I(k)\bigcap P^-(k)$. Moreover, we have the equality
$$O=(P(k)\times P^-(k))\cdot (\sigma, \tau)\cdot F.$$
Now we are left to work out the set $F$.
By \Cref{propositionofbasepoint}, the stabilizer of $b_I$ contains $\Diag(L_I)$. This means that $F\subset L_I/Z_I$ under \Cref{equationoffiber}. Again, by the relative Bruhat decomposition for groups \emph{loc. cit.,} there exists a unique $\rho\in W_I$ such that 
$$F= (L_I(k)\bigcap P(k))\cdot\rho \cdot(L_I(k)\bigcap P^-(k)).$$
Since $\sigma, \tau\in W^I$, we thus finish our proof by observing 
$$P(k)\sigma(P(k)\bigcap L_I(k))= P(k)\sigma\;\; \text{and}\;\;P^-(k)\tau(P^-(k)\bigcap L_I(k))= P^-(k)\tau.$$
\end{proof}

\section{Geometric refinement}

In this section, we shall keep the notations of \Cref{sectionbruhatdecomposition}. We consider two subvarieties of $\overline{G}$
$$O_i\coloneq (P\times_k P^{-})\cdot (\sigma_i \rho_i,\tau_i)\cdot b_{I_i}\subset \overline{G},\;\text{with}\;\rho_i\in W_{I_i}\;,\sigma_i,\tau_i\in W^{I_i}\;\text{and}\; i={1,2},$$
which should be viewed as the "geometrizations" of $O_1(k)$ and $O_2(k)$.

\begin{theorem}\label{theoremofgeometrization}    
   If $O_1\bigcap O_2\neq \emptyset$, then $I_1=I_2$, $(\sigma_1,\tau_1)=(\sigma_2,\tau_2)$ and $\rho_1=\rho_2$.
\end{theorem}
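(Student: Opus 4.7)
The plan is to handle the three conclusions $I_1=I_2$, $(\sigma_1,\tau_1)=(\sigma_2,\tau_2)$, and $\rho_1=\rho_2$ in succession, using the $(G\times G)$-equivariant structure recorded in \Cref{splitproperties} and \Cref{galoisdescent} together with the geometric refinement of the relative Bruhat decomposition (\Cref{RelativeBruhatforgroup}~(2)) applied at successively smaller subquotients.

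First I would show $I_1=I_2$. Each $O_i$ is contained in the $(G\times_k G)$-orbit $(G\times_k G)\cdot b_{I_i}$. After base change to $k_s$, the point $b_{I_i}$ is identified with $z_{\widetilde{I_i}}\in \overline{T_{k_s}}(k_s)$, where $\widetilde{I_i}\subset \widetilde{\Delta}\setminus \widetilde{\Delta}_0$ is the union of the $\ast$-orbits of elements of $I_i$; in particular $I\mapsto \widetilde{I}$ is injective. By \Cref{splitproperties}~(2), distinct subsets $\widetilde{I}\subset \widetilde{\Delta}$ yield disjoint orbits $X_{\widetilde{I}}$, so $O_1\cap O_2\ne \emptyset$ forces $\widetilde{I_1}=\widetilde{I_2}$, whence $I_1=I_2=:I$. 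Set $b:=b_I$.

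Next I would reduce to the equality of $(\sigma_i,\tau_i)$ via a natural projection. By \Cref{splitproperties}~(3) together with the Galois descent of \Cref{galoisdescent}, the orbit $(G\times_k G)\cdot b$ is equivariantly isomorphic to $(G\times_k G)\times_k^{P_I^-\times_k P_I}\overline{L_I/Z_I}$, and since $\Stab_{G\times_k G}(b)\subset P_I^-\times_k P_I$, the map
\[\pi_I\colon (G\times_k G)\cdot b\longrightarrow G/P_I^-\times_k G/P_I,\qquad (g_1,g_2)\cdot b\longmapsto (g_1P_I^-,\,g_2P_I),\]
is well-defined and $(G\times G)$-equivariant. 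Because $\rho_i\in W_I$ admits a representative in $L_I\subset P_I^-\cap P_I$, we obtain
\[\pi_I(O_i)=\bigl(P\cdot\sigma_i\cdot P_I^-/P_I^-\bigr)\,\times\,\bigl(P^-\cdot\tau_i\cdot P_I/P_I\bigr).\]
Any geometric point of $O_1\cap O_2$ projects into $\pi_I(O_1)\cap\pi_I(O_2)$. The Bruhat cells $P\sigma P_I^-$ for $\sigma\in W^I$ are pairwise disjoint locally closed subvarieties of $G$ (a consequence of \Cref{RelativeBruhatforgroup}~(2) combined with the Bruhat decomposition of $L_I$ inside $P_I^-=L_I\cdot\mathscr{R}_u(P_I^-)$), so $\sigma_1=\sigma_2=:\sigma$ and symmetrically $\tau_1=\tau_2=:\tau$.

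Finally, to deduce $\rho_1=\rho_2$, I would descend to the fiber. Any $x\in O_1\cap O_2$ gives $(\sigma^{-1},\tau^{-1})\cdot x\in\pi_I^{-1}(eP_I^-,eP_I)\cong \overline{L_I/Z_I}$. Mimicking the computation in the proof of \Cref{borbitproposition}---and using crucially that $\sigma,\tau\in W^I$ imply $\sigma(L_I\cap P)\sigma^{-1}\subset P$ and $\tau(L_I\cap P^-)\tau^{-1}\subset P^-$, so that $(P\times P^-)\cap (\sigma P_I^-\sigma^{-1}\times \tau P_I\tau^{-1})$ projects onto $(L_I\cap P)\times (L_I\cap P^-)$ in $L_I\times L_I$---the intersection of $(\sigma^{-1},\tau^{-1})\cdot O_i$ with the fiber is exactly the image of $(L_I\cap P)\cdot\rho_i\cdot(L_I\cap P^-)\subset L_I$ in $L_I/Z_I$. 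These two subvarieties of $L_I/Z_I$ therefore meet, so the geometric relative Bruhat decomposition (\Cref{RelativeBruhatforgroup}~(2)) applied to the adjoint reductive $k$-group $L_I/Z_I$ with minimal parabolic $L_I\cap P$ and relative Weyl group $W_I$ forces $\rho_1=\rho_2$. The main technical obstacle throughout is the bookkeeping of ensuring that each intermediate object---the map $\pi_I$, the identification of its fiber with $\overline{L_I/Z_I}$, and the stabilizer computation---is defined over $k$ rather than only over $k_s$; this is precisely what the Galois-descent framework of \Cref{sectionwonderfulcompactification} and the argument of \Cref{borbitproposition} already provide, so the proof is essentially a geometric incarnation of that rational-point decomposition.
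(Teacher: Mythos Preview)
Your proof follows essentially the same three-step route as the paper: the $(G\times_k G)$-orbit stratification gives $I_1=I_2$, projection via $\pi_I$ together with the geometric relative Bruhat decomposition on $G/P_I^-\times_k G/P_I$ gives $(\sigma_1,\tau_1)=(\sigma_2,\tau_2)$, and a Levi-component argument in the fiber reduces $\rho_1=\rho_2$ to \Cref{RelativeBruhatforgroup}~(2) for $L_I$. One minor slip: it is not true that an arbitrary $x\in O_1\cap O_2$ satisfies $(\sigma^{-1},\tau^{-1})\cdot x\in\pi_I^{-1}(eP_I^-,eP_I)$; you first have to translate $x$ by an element of $P(\overline{k})\times P^-(\overline{k})$ (as the paper does explicitly) so that its $\pi_I$-image is $(\sigma P_I^-,\tau P_I)$, after which your fiber argument goes through.
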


\begin{proof}
    First of all, by \Cref{sectionsplitcase} (2), we must have $I_1=I_2$. For simplicity, we write $I$ for both $I_1$ and~$I_2$.
    Recall that we have the natural projection morphism
    $$\pi_I: (G\times_k G)\times_k^{P_I^-\times_k P_I} \overline{L_I/Z_I}\longrightarrow G/P_I^-\times_k G/P_I.$$
    Since 
    $\pi_I$ is $G\times_k G$-equivariant and $\rho_i\in W_I$, we have
    \begin{align*}
        \pi_I(O_i) & =(P\cdot \sigma_i \cdot P_I^-/P_I^- )\times_k (P^-\cdot \tau_i \cdot P_I/P_I)\\
        & =  (w_0\cdot P^- w_0^{-1}\cdot \sigma_i \cdot P_I^-/P_I^- )\times_k (w_0^{-1}\cdot P \cdot w_0 \cdot \tau_i \cdot P_I/P_I),\;\text{for}\; i=1,2,
    \end{align*}
    where two minimal parabolic subgroups $P$ and $P^-$ are conjugate to each other by $w_0\in W$ (\cite[théorème~4.13]{boreltits}).
    Since $\pi_I(O_1\bigcap O_2)\neq \emptyset$ and $\sigma_i, \tau_i \in W^I$, by \cite[Proposition~21.29(iv)]{Borellinearalgebraicgroup} (which is a variant of \Cref{RelativeBruhatforgroup} (2)), we conclude that $\sigma_1=\sigma_2$ and $\tau_1=\tau_2$. For simplicity, we write $\sigma$ (resp., $\tau$) for $\sigma_1$ and $\sigma_2$ (resp., $\tau_1$ and $\tau_2$).

    Now we are left to show that $\rho_1=\rho_2$. For this, let $p_i\in P(\overline{k}), q_i\in P^{-}(\overline{k})$ for $i=1,2$ such that 
    $$(p_1, q_1)\cdot (\dot{\sigma}\dot{\rho_1},\dot{\tau})\cdot  b_I= (p_2,q_2)\cdot (\dot{\sigma}\dot{\rho_2}, \dot{\tau})\cdot b_I \in O_1\bigcap O_2,$$
    where $\dot{\sigma},\dot{\rho_1},\dot{\rho_2},\dot{\tau}$ are some representatives of $\sigma,\rho_1,\rho_2,\tau$ in $N_G(S)(k)$ respectively.
    By \Cref{splitproperties} (2) and Galois descent, this amounts to that $(\dot{\rho}_2^{-1} \cdot \dot{\sigma}^{-1}\cdot p_2^{-1}\cdot p_1\cdot \dot{\sigma} \cdot \dot{\rho}_1,\; \dot{\tau}^{-1}\cdot q_2^{-1}\cdot  q_1\cdot \dot{\tau})$ belongs to
    $$\Stab_{G\times_k G}(b_I)=\{(u\cdot l,l'\cdot u')\in ((\mathscr{R}_u(P^-_I)\rtimes L_I)\times_k (L_I \ltimes \mathscr{R}_u(P_I))\vert l\cdot l'^{-1}\in Z_I\}.$$
   On the other hand, since $\tau\in W^I$ (\Cref{equationofWJ}), 
    the Levi component of 
    $$\dot{\tau}^{-1}\cdot q_2^{-1}\cdot  q_1\cdot \dot{\tau}\in P^-_I=L_I\ltimes \mathscr{R}_u(P^-_I)\;\; $$ 
    lies in $P^-\bigcap L_I$ (resp., $P\bigcap L_I$).
    Thus, the $L_I$-component of $\dot{\rho}_2^{-1} \cdot \dot{\sigma}^{-1}\cdot p_2^{-1}\cdot p_1\cdot \dot{\sigma} \cdot \dot{\rho}_1$ also lies in $P^-$. This means that
    $\rho_1\in P^-\cdot \rho_2\cdot P.$
    Finally we conclude that $\rho_1=\rho_2$ thanks to \Cref{RelativeBruhatforgroup} (2).
\end{proof}

For an algebraic variety $H$ on which an algebraic group $A$ acts, consider a $k$-point $h\in H(k)$ and its orbit $A\cdot h$. In general, it is not true that $(A\cdot h)(k)=A(k)\cdot h $. For instance, we can consider the action $\mathbb{G}_{m,k}\times_k\mathbb{G}_{m,k}\rightarrow\mathbb{G}_{m,k}$ by $(x,y)\mapsto x^2y$ when $(k^{\times})^{2}\neq k^{\times}$. However, in our situation, we have the following result.

\begin{corollary}\label{kptsoforbit}
    Let $I\subset \Delta$, $\sigma, \tau\in W^I$ and $\rho\in W_I$. Consider the subvariety
    $O=(P\times P^-)\cdot (\sigma \rho,\tau)\cdot b_I \subset \overline{G}$. Then we have $O(k)=P(k)\times P^-(k)\cdot (\sigma \rho,\tau)\cdot b_I$.
\end{corollary}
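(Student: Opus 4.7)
The plan is to deduce this corollary as an immediate consequence of the orbit decomposition of \Cref{propositionofbasepoint} and \Cref{borbitproposition}, combined with the geometric refinement \Cref{theoremofgeometrization}; no new argument is needed.

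The inclusion $(P(k) \times P^-(k)) \cdot (\sigma\rho, \tau) \cdot b_I \subseteq O(k)$ is immediate: by the footnote to \Cref{borbitproposition}, the elements $\sigma, \rho, \tau \in W$ admit $k$-rational representatives in $N_G(S)(k)$ via Borel--Tits, and $b_I \in \widetilde{S}(k)$ is $k$-rational by construction, so the right-hand side consists of $k$-points that manifestly lie in the variety $O$.

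For the reverse inclusion, I would take an arbitrary $x \in O(k) \subset \overline{G}(k)$ and apply \Cref{propositionofbasepoint} together with \Cref{borbitproposition} to locate $x$ in a unique $P(k) \times P^-(k)$-orbit of the form $(P(k) \times P^-(k)) \cdot (\sigma'\rho', \tau') \cdot b_{I'}$, for some tuple $(I', \sigma', \tau', \rho')$ with $\sigma', \tau' \in W^{I'}$ and $\rho' \in W_{I'}$. Setting $O' \coloneq (P \times_k P^-) \cdot (\sigma'\rho', \tau') \cdot b_{I'}$, the point $x$ belongs to $O \cap O'$, so this intersection is non-empty. Then \Cref{theoremofgeometrization} forces $I = I'$, $(\sigma, \tau) = (\sigma', \tau')$ and $\rho = \rho'$, and therefore $x \in (P(k) \times P^-(k)) \cdot (\sigma\rho, \tau) \cdot b_I$.

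I do not expect any technical obstacle: all of the substantive work has already been carried out in the preceding results. The corollary simply reflects the general principle that, while an orbit map $A \to A \cdot h$ need not be surjective on $k$-points in general (as the $\mathbb{G}_m$ example preceding the statement shows), the geometric uniqueness established in \Cref{theoremofgeometrization} rules out the extra $k$-points that such cohomological obstructions would otherwise produce in our setting.
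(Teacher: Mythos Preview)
Your proposal is correct and follows essentially the same approach as the paper: the paper's proof is the one-line observation that if $O(k)$ contained more than one $P(k)\times P^-(k)$-orbit, then applying \Cref{borbitproposition} to a point of the extra orbit would produce parameters $(I',\sigma',\tau',\rho')$ whose geometrized orbit meets $O$, contradicting \Cref{theoremofgeometrization}. Your write-up simply unpacks this sentence explicitly, including the easy inclusion, and is fine as stated.
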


\begin{proof}
    The statement that $O(k)$ contains more than one $P(k)\times P^-(k)$-orbits would contradict \Cref{theoremofgeometrization} by applying \Cref{borbitproposition}.
\end{proof}

\section{Relative Bruhat order}\label{sectionbruhatorder}

In the present section, we consider the Zariski closure relation between $P\times_kP^{-}$-orbits in $\overline{G}$. We will keep the notations of \Cref{sectionbruhatdecomposition}. Our main theorem will be \Cref{relbruhatmainthm} which will be the combination of \Cref{closureinsamegorbit} and \Cref{irrecompofintersection}. We start by proving some preparatory results about the structure of $(P\times_kP^-)$-orbits closures.

In this section, unless mentioned otherwise, for a subvariety $Y\subset \overline{G}$ we will use $\overline{Y}$ to denote the Zariski closure of $Y$ in $\overline{G}$.

\subsection{Action of parabolic subgroups}
In this part, we show some technical results about the actions of certain parabolic subgroups, which will play an important role in the proof of \Cref{relbruhatmainthm}.

We proceed by describing the structure of a $(P\times_kP^{-})$-orbit. Recall that a Bruhat cell $PwP$ is isomorphic to the direct product of $P$ and unipotent subgroup determined by $w$ (\cite[21.14~Lemma]{Borellinearalgebraicgroup}). More precisely, for any $w\in W$, let 
\[U_w\coloneq\mathscr{R}_u(P)\bigcap w\mathscr{R}_u(P)w^{-1}= \prod_{\alpha\in \Phi^+_{nd}\bigcap w\Phi^+ } U_{(\alpha)}, \]
\[U^-_{w}\coloneq\mathscr{R}_u(P^-)\bigcap w\mathscr{R}_u(P^-)w^{-1}= \prod_{\alpha\in \Phi^-_{nd}\bigcap w\Phi^- } U_{(\alpha)}, \]
where the products can be taken  with respect to any order on the sets $\Phi^{\pm}_{nd}\bigcap w\Phi^{\pm}.$
Then for any representative $\dot{w}$ of $w$, the action maps
\[U_w^{\pm}\times_k P^{\mp}\to P^{\pm}wP^{\mp},\  (x,y)\mapsto x\dot{w}y\]
are isomorphisms. This fact suggests the following lemma.

\begin{lemma}\label{structureoforbit}
    Let $J\subset \Delta$ be a subset of simple roots and let 
    \[O=(P\times_kP^{-})\cdot(\sigma\rho,\tau)\cdot b_J\]
    be the $(P\times_kP^-)$-orbit corresponds to $\sigma,\tau\in W^J$ and $\rho\in W_J$. Let $\dot{\sigma}$ (resp. $\dot{\rho}$, $\dot{\tau}$) be a representative of $\sigma$ (resp. $\rho$, $\tau$) in $N_G(S)(k)$.
    Then the action map 

    \[U_{\sigma\rho}\times_k \Big( P^-  \bigcap (\tau P_J^-\tau^{-1})\Big) \to O 
    \]
    \[(x,y)\mapsto (x,y)\cdot(\dot{\sigma}\dot{\rho},\dot{\tau})\cdot b_J\]
    induces an isomorphism  $U_{\sigma\rho}\times_k (( P^-  \bigcap (\tau P_J^-\tau^{-1}))/Z_J) \simeq O  $. Similarly, the action map 
     \[ \Big( P  \bigcap (\tau P_J\tau^{-1})\Big) \times_k U_{\tau\rho^{-1}}^- \to O 
    \]
     \[(x,y)\mapsto (x,y)\cdot(\dot{\sigma}\dot{\rho},\dot{\tau})\cdot b_J\]
    induces an isomorphism $(( P  \bigcap (\tau P_J\tau^{-1}))/Z_J) \times_k U_{\tau\rho^{-1}}^- \simeq O $.
\end{lemma}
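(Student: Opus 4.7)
The plan is to combine the orbit--stabilizer theorem with the Bruhat-type factorizations recalled just before the statement and the explicit description of $\Stab(b_J)$. By \Cref{splitproperties}(2) over $k_s$ together with the Galois descent of \Cref{galoisdescent}, the stabilizer of $b_J$ in $G\times_kG$ is
\[
\Stab_{G\times_kG}(b_J)=\bigl(\mathscr{R}_u(P_J^-)\times_k\mathscr{R}_u(P_J)\bigr)\cdot(Z_J\times_k\{1\})\cdot\Diag(L_J),
\]
so that the stabilizer of $(\sigma\rho,\tau)\cdot b_J$ in $G\times_kG$ is obtained by conjugating the above by $(\dot\sigma\dot\rho,\dot\tau)$.

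For the first isomorphism, I would first verify the $Z_J$-invariance of the action map $\phi_1\colon U_{\sigma\rho}\times_k(P^-\cap\tau P_J^-\tau^{-1})\to O$, $(x,y)\mapsto (x,y)\cdot(\sigma\rho,\tau)\cdot b_J$. By \Cref{propositionofbasepoint}, $Z_J\times_kZ_J\subset\Stab(b_J)$, so for $z\in Z_J$ the pair $(1,\dot\tau z\dot\tau^{-1})$ stabilizes $(\sigma\rho,\tau)\cdot b_J$. Since $Z_J\subset L_J\subset P_J^-$ and $\tau$ normalizes $S$, the subgroup $\dot\tau Z_J\dot\tau^{-1}$ lies inside $P^-\cap\tau P_J^-\tau^{-1}$, and right multiplication by it on the second factor is the $Z_J$-action through which $\phi_1$ factors.

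Next, for surjectivity, given $(p,q)\in P\times_kP^-$, the Bruhat factorization $P\sigma\rho P^-=U_{\sigma\rho}\cdot\dot\sigma\dot\rho\cdot P^-$ writes $p\dot\sigma\dot\rho=x\dot\sigma\dot\rho z$ with $x\in U_{\sigma\rho}$ and $z\in P^-$; then the $\Diag(L_J)$, $Z_J\times\{1\}$, and $\mathscr{R}_u(P_J^-)\times\mathscr{R}_u(P_J)$ pieces of $\Stab(b_J)$ allow me to transfer the Levi and unipotent components of $z$ into a modification of the second coordinate, producing an equality $(p,q)\cdot(\sigma\rho,\tau)\cdot b_J=(x,y)\cdot(\sigma\rho,\tau)\cdot b_J$ with $y\in P^-\cap\tau P_J^-\tau^{-1}$. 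For the fibers, if $(x,y)\cdot(\sigma\rho,\tau)\cdot b_J=(x',y')\cdot(\sigma\rho,\tau)\cdot b_J$ then $(x^{-1}x',y^{-1}y')$ lies in the conjugated stabilizer; the first-coordinate constraint $x^{-1}x'\in U_{\sigma\rho}\cap\dot\sigma\dot\rho P_J^-\dot\rho^{-1}\dot\sigma^{-1}$ is trivial by Bruhat transversality, so $x=x'$, and the residual constraint is exactly $y^{-1}y'\in\dot\tau Z_J\dot\tau^{-1}$. Combining these with a dimension count shows that $\phi_1$ descends to an isomorphism.

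The second isomorphism is obtained by a symmetric argument, using the Bruhat factorization $P^-(\tau\rho^{-1})P=U_{\tau\rho^{-1}}^-\cdot\dot\tau\dot\rho^{-1}\cdot P$ on the second coordinate together with a decomposition of $P\cap\tau P_J\tau^{-1}$ analogous to the one for $P^-\cap\tau P_J^-\tau^{-1}$. The main technical obstacle will be the transfer step in the surjectivity argument: verifying that the element $z\in P^-$ coming from the Bruhat factorization can indeed be absorbed into the second coordinate through $\Stab(b_J)$ while the resulting element stays inside the smaller subgroup $P^-\cap\tau P_J^-\tau^{-1}$. This relies on careful bookkeeping of the Levi-compatibility encoded in the $\Diag(L_J)$ factor of $\Stab(b_J)$, and uses the hypotheses $\sigma,\tau\in W^J$ to control which weight spaces survive after conjugation.
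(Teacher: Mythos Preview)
Your orbit--stabilizer approach is different from the paper's and is in principle workable, but the fiber computation as written has a gap. You claim that the first-coordinate constraint alone, $x^{-1}x'\in U_{\sigma\rho}\cap\dot\sigma\dot\rho\,P_J^-\,(\dot\sigma\dot\rho)^{-1}$, forces $x=x'$ ``by Bruhat transversality''. This intersection is \emph{not} trivial: its root set is $\Phi^+\cap\sigma\rho(\Phi^+)\cap\sigma\rho(\Phi^-\cup\Phi_J^+)=\Phi^+\cap\sigma\rho(\Phi_J^+)=\sigma(\Phi_J^+\cap\rho\Phi_J^+)$, using $\sigma\in W^J$. This is nonempty as soon as $J\neq\emptyset$ and $\rho\neq w_{0,J}$; for instance with $\sigma=\rho=1$ one gets all of $\mathscr{R}_u(P)\cap L_J$. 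To force $x=x'$ you must bring in the second coordinate: the hypothesis $y,y'\in P^-\cap\tau P_J^-\tau^{-1}$ together with $\tau\in W^J$ pins the $L_J$-part of the stabilizer element to $P^-\cap L_J$, and only after feeding this back into the first coordinate does the unipotent part collapse. The surjectivity ``transfer'' has the same coupled nature, as you anticipate; landing $y$ inside $P^-\cap\tau P_J^-\tau^{-1}$ is not automatic from the first-coordinate Bruhat factorisation alone.

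The paper sidesteps this coupling by working through the fibration $\pi_J\colon\overline{X_J}\to G/P_J^-\times_kG/P_J$ coming from \Cref{splitproperties}(3). The orbit $O$ then splits as the product of its image in the base, a Bruhat cell parametrised by $\prod_{\alpha\in\Phi_{nd}^+\cap\sigma(\Phi^+\setminus\Phi_J^+)}U_{(\alpha)}\times\prod_{\beta\in\Phi_{nd}^-\cap\tau(\Phi^-\setminus\Phi_J^-)}U_{(\beta)}$, and its fibre, a Bruhat cell $(P\cap L_J)\rho(P^-\cap L_J)/Z_J$ in $L_J/Z_J$ parametrised by $(U_\rho\cap L_J)\times(P^-\cap L_J)/Z_J$. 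Both are \emph{known} isomorphisms from the ordinary Bruhat decomposition, so no stabilizer chase is needed. The only remaining step is the root identity $\Phi^+\cap\sigma\rho\Phi^+=(\Phi^+\cap\sigma(\Phi^+\setminus\Phi_J^+))\cup\sigma(\Phi_J^+\cap\rho\Phi_J^+)$, which gives $U_{\sigma\rho}=(\prod_\alpha U_{(\alpha)})\cdot\dot\sigma(U_\rho\cap L_J)\dot\sigma^{-1}$ and assembles the two pieces into the stated form.
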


\begin{proof}
    By the Bruhat decomposition for $(G\times_k G)/(P_J^-\times_kP_J)$, the map sending $(x,y)$ to $(x\dot{\sigma},y\dot{\tau})(P_J^-\times_kP_J)$ induces an isomorphism 
    \[(\prod_{\alpha }U_{(\alpha)})\times_k(\prod_{\beta} U_{(\beta)})\simeq (P\times_k P^-)\cdot(\dot{\sigma},\dot{\tau})\cdot(P_J^-\times_kP_J)/(P_J^-\times_kP_J), \]
    where the products are taken over $\alpha \in \Phi^+_{nd}\bigcap\sigma (\Phi^+-\Phi_J^+ )$ and $\beta\in \Phi^-_{nd}\bigcap \tau(\Phi^--\Phi_J^-) $ respectively.

    The Bruhat decomposition for $L_J/Z_J$ implies the action map
    \[\Big(U_{\rho}\bigcap L_J\Big)\times_k \Big(P^-\bigcap L_J\Big) \to \Big(P\bigcap L_J \Big)\cdot \dot{\rho} \cdot \Big(P^-\bigcap L_J\Big)/Z_J, \]
    which sends $(x,y)$ to $x\dot{\rho} y^{-1}$, has kernel $\{1\}\times_k Z_J$. Because $\sigma$ is a minimal length representative of $W/W_J$, we have $\sigma\Phi_J^+\subset \Phi^+$. Moreover, because $\rho(\Phi^+-\Phi^+_J)=(\Phi^+-\Phi^+_J)$, we have 
    \[\Phi^+\bigcap (\sigma\rho\Phi^+)=(\Phi^+\bigcap (\sigma(\Phi^+-\Phi_J^+))\bigcup \sigma(\Phi_{J}^+\bigcap \rho(\Phi^+_J))\]
    which implies 
    \[U_{\sigma\rho}=\Big(\prod_{\alpha \in \Phi^+_{nd}\bigcap\sigma (\Phi^+-\Phi_J^+ )} U_{(\alpha)}\Big) \dot{\sigma}\Big(U_{\rho}\bigcap L_J \Big)\dot{\sigma}^{-1}.\]
     Hence, combining these two statements implies the first case of \Cref{structureoforbit}. The opposite case can be proved similarly by observing that $(\dot{\sigma}\dot{\rho},\dot{\tau})\cdot b_J=(\dot{\sigma},\dot{\tau}\dot{\rho}^{-1})\cdot b_J$.
\end{proof}

For any $w\in W$, we  write $d(w)\coloneq\mathrm{codim}_G(P\cdot w\cdot P^-)=\dim(P\cdot w\cdot P/P)$. Then we have
\begin{itemize}
    \item $d(w)=\sum_{\alpha\in\Phi^+_{nd}\bigcap w(\Phi^-)}\dim(U_{(\alpha)})$
    \item $d(w_1)+d(w_2)=d(w_1w_2)$ when $l(w_1)+l(w_2)=l(w_1w_2)$
    \item For any $I\subset \Delta$ and $w\in W_I$, we have
    \[d(w)=\dim((P\bigcap L_I)\cdot w\cdot (P\bigcap L_I)/(P\bigcap L_I))=\mathrm{codim}_{L_I}((P\bigcap L_I)\cdot w\cdot (P^-\bigcap L_I))\]
    \item For any $I\subset \Delta$ and $w\in W^I$, we have
    \[d(w)=\mathrm{codim}_{G}(P\cdot w\cdot P^-_I)=\mathrm{codim}_G(P^-\cdot w\cdot P_I)\]
\end{itemize}
which follows from description of Bruhat cells, c.f., \cite[Proposition~21.22]{Borellinearalgebraicgroup}. Note that when $G$ is split over $k$ then $d(w)=l(w)$. Then, from Proposition \ref{structureoforbit}, we can deduce the following dimension formula.

\begin{corollary}\label{dimcomputation}
    Retain the notation of Proposition \ref{structureoforbit}, we have 
    $\mathrm{codim}_{\overline{X_J}}(O)=d(\sigma)+d(\tau)+d(\rho)$.
\end{corollary}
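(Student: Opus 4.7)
The plan is to compute $\dim O$ from the explicit product decomposition in Proposition \ref{structureoforbit} and $\dim \overline{X_J}$ from the fiber-bundle description of $X_J$ in \Cref{splitproperties} (3) (applied after Galois descent), then subtract. By Proposition \ref{structureoforbit},
$$O \cong U_{\sigma\rho} \times_k \big((P^- \cap \tau P_J^- \tau^{-1})/Z_J\big),$$
so the task reduces to computing the dimensions of the two factors separately via root-system bookkeeping.

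For the first factor, from the defining product $U_w = \prod_{\alpha \in \Phi^+_{nd} \cap w\Phi^+} U_{(\alpha)}$ and the formula $d(w) = \sum_{\alpha \in \Phi^+_{nd} \cap w\Phi^-} \dim U_{(\alpha)}$ bulleted before the corollary, one gets $\dim U_{\sigma\rho} = \dim \mathscr{R}_u(P) - d(\sigma\rho)$. Since $\sigma \in W^J$ is the minimal-length representative of $\sigma W_J$, we have $l(\sigma\rho) = l(\sigma) + l(\rho)$, and the additivity bullet then forces $d(\sigma\rho) = d(\sigma) + d(\rho)$.

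For the second factor, the assumption $\tau \in W^J$ gives $\tau \Phi^+_J \subset \Phi^+$, so among the roots $-\tau\Phi^+ \cup \tau\Phi^+_J$ of $\tau P_J^- \tau^{-1}$, only those in $-\tau\Phi^+$ can lie in $-\Phi^+$ (the nontrivial roots of $P^-$). Hence the root system of $P^-\cap \tau P_J^-\tau^{-1}$ is exactly $\Phi^-\cap \tau\Phi^-$, and adding the Levi contribution $Z_G(S)$ yields $\dim(P^-\cap \tau P_J^-\tau^{-1}) = \dim Z_G(S) + \dim U^-_\tau = \dim P^- - d(\tau)$, where the last equality follows from $d(\tau^{-1})=d(\tau)$ (an immediate substitution $\alpha\mapsto -\tau\alpha$ in the defining sum).

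Finally, from $X_J \cong (G\times_k G) \times_k^{P_J^- \times_k P_J} L_J/Z_J$ and the identity $\dim P_J + \dim P_J^- = \dim G + \dim L_J$ (from $P_J\cdot P_J^-$ being open dense in $G$), we get $\dim \overline{X_J} = \dim X_J = \dim G - \dim Z_J$. Subtracting the expression obtained for $\dim O$ yields
$$\mathrm{codim}_{\overline{X_J}}(O) = d(\sigma) + d(\rho) + d(\tau) + \big(\dim G - \dim \mathscr{R}_u(P) - \dim P^-\big),$$
and the parenthesized quantity vanishes because the open Bruhat cell $\mathscr{R}_u(P)\times_k P^- \hookrightarrow G$ is an open immersion. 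There is no serious obstacle; the argument is entirely dimension-counting, and the only point requiring care is invoking the $d$-additivity correctly on both the $\sigma\rho$ side and (via the symmetry $d(w)=d(w^{-1})$) the $\tau$ side, which is automatic once one uses $\sigma,\tau\in W^J$.
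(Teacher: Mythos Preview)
Your proposal is correct and is precisely the dimension count the paper leaves implicit when it says the corollary follows from Proposition~\ref{structureoforbit}; you have simply spelled out the root-system bookkeeping that the paper expects the reader to supply, using the bulleted properties of $d(\cdot)$ listed just before the corollary.
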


Next we describe, for a simple root $\alpha\in \Delta$, how the parabolic subgroups $P_{\alpha}\times_kP^{-}$ and $P\times_kP_{\alpha}^-$ act on closures of $P\times_kP^-$-orbits whose $k$ point is non-empty. As preparation, we record a lemma which concerns the action of simple reflections in $W$ on minimal length representatives.

\begin{lemma} \label{reflectionandminlengthreps}
    Let $(W,S)$ be a Coxeter system and let $I\subset S$. Let $x\in W^I$ and let $s\in S$ be a simple reflection. Then exactly one of the following holds.
    \begin{enumerate}
        \item \label{refmin1} We have $l(sx)<l(x)$. In this case, $sx\in W^I$ always holds.
        \item \label{refmin2} We have $l(sx)>l(x)$ and $sx=xs^{\prime}$ for some simple reflection $s^{\prime}\in I$.
        \item \label{refmin3} We have $l(sx)>l(x)$ and $sx\in W^I$. 
        
    \end{enumerate}
\end{lemma}

\begin{proof}
    Suppose that $l(sx)<l(x)$. If $sx\notin W^I$, then there exists $w\in W_I$ such that $l(sxw)<l(sx)$. Then we have
    \[l(xw)\leq 1+l(sxw)\leq l(sx)<l(x) \]
    which contradicts $x\in W^I$. This finishes case \ref{refmin1}.

    Now suppose that $l(sx)>l(x)$. There will be two cases, depending on whether $sxW_I=xW_I$ or not. If $sxW_I=xW_I$, there exists $w\in W_I$ such that $sx=xw$. Then we have 
    \[l(xw)=l(sx)=l(x)+1\] 
    hence $w=s^{\prime}$ for some $s^{\prime}\in I$. Hence, we are in case \ref{refmin2}.
    
    It remains to consider the case where $l(sx)>l(x)$ and $sxW_I\neq xW_I$. We claim $sx\in W^I$.  If not, there exists $1\neq w\in W^I$ such that $l(sxw)<l(sx)$. First, we show $l(sxw)=l(xw)+1$. Fix a reduced expression 
    \[xw=s_1\ldots s_ts_{t+1}\ldots s_n\]
    such that $x=s_1\ldots s_t$ and $w=s_{t+1}\ldots s_n$. If $l(sxw)<l(xw)$, by exchange property, there exists $1\leq i\leq n$ such that 
    \[sxw=s_1\ldots\widehat{s_i}\ldots s_n.\]
    Because $l(sx)>l(x)$, we must have $t+1\leq i$ which implies $sxW_I=xW_I$, a contradiction. 
    Then we may apply the discussion of case \ref{refmin1} to $sxw$, we see $xw$ is a minimal length representative of $xwW_I=xW_I$ which violates the uniqueness of minimal length representative.
\end{proof}

Now we describe the action of parabolic subgroups.

\begin{proposition}\label{actrk1para}
 For $J\subset \Delta$, let $O=(P\times_k P^-)\cdot (\sigma\rho,\tau)\cdot b_{J}$ be a $P\times_kP^-$-orbit over $k$ in $X_J$, where $\sigma, \tau\in W^J $ and  $\rho\in W_J$. Let $Y=\overline{O}$ be the Zariski closure of $O$. Then for any simple root $\alpha\in \Delta$, we have
    \[(P_{\alpha}\times_kP^-)\cdot Y=\begin{cases}
        \overline{(P\times_k P^-)\cdot (s_{\alpha}\sigma\rho,\tau)\cdot b_{J}} & \text{if \ }l(s_{\alpha}\sigma)<l(\sigma)\\
        \overline{(P\times_k P^-)\cdot (\sigma s_{\beta} \rho,\tau)\cdot b_{J}} & \text{if\ } s_{\alpha}\sigma =\sigma s_{\beta} \text{\ for\ some } \beta\in J \text{\ and\ } l(s_{\beta}\rho)<l(\rho)\\
        Y & \text{otherwise}.
    \end{cases}\]
    Similarly, we have
     \[(P\times_kP_{\alpha}^-)\cdot Y=\begin{cases}
        \overline{(P\times_k P^-)\cdot (\sigma\rho,s_{\alpha}\tau)\cdot b_{J}} & \text{if \ }l(s_{\alpha}\tau)<l(\tau)\\
        \overline{(P\times_k P^-)\cdot (\sigma \rho s_{\beta}, \tau)\cdot b_{J}} & \text{if\ } s_{\alpha}\tau=\tau s_{\beta} \text{\ for\ some } \beta\in J \text{\ and\ } l(\rho s_{\beta})<l(\rho)\\
        Y & \text{otherwise}.
    \end{cases}\]
    Moreover, if $(P_{\alpha}\times_kP^-)\cdot Y\neq Y$, then the action morphism 
    \[(P_{\alpha}\times_kP^-)\times_k^{P\times_kP^-}Y\to (P_{\alpha}\times_kP^-)\cdot Y\] 
    is birational and the same holds for $(P\times_kP_{\alpha}^-)$. 
\end{proposition}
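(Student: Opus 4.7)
The plan is to prove the assertion about $(P_\alpha \times_k P^-)\cdot Y$ in detail; the parallel statement about $(P\times_k P^-_\alpha)\cdot Y$ will then follow by an entirely analogous argument, interchanging the roles of the two $P$-factors (for instance via the inversion anti-involution on $\overline{G}$, which exchanges $P$ with $P^-$ and swaps $\sigma$ with $\tau$ and $\rho$ with $\rho^{-1}$). The main tool is the $(G\times_k G)$-equivariant projection $\pi_J\colon \overline{X_J}\to G/P_J^-\times_k G/P_J$ from \Cref{splitproperties}~(3), which organizes the analysis into a base component (on $G/P_J^-$) and a fiber component (inside $\overline{L_J/Z_J}$).

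By equivariance, $\pi_J\bigl((P_\alpha\times_k P^-)\cdot Y\bigr) = \bigl(P_\alpha\cdot \overline{P\sigma P_J^-/P_J^-}\bigr)\times_k \overline{P^-\tau P_J/P_J}$, and the classical rank-1 Bruhat calculus on $G/P_J^-$ separates two scenarios. Either $s_\alpha\sigma\in W^J$ with length differing from $\ell(\sigma)$ in the direction that strictly enlarges $P_\alpha\cdot \overline{P\sigma P_J^-/P_J^-}$ (corresponding to the first case of the proposition), or else $\pi_J(Y)$ is stable under $P_\alpha$. In the latter case, any enlargement of $Y$ must occur in the fiber $\pi_J^{-1}(\sigma P_J^-,\tau P_J)\cap Y$; by \Cref{structureoforbit}, this fiber is identified with the closure inside $\overline{L_J/Z_J}$ of the $L_J$-level Bruhat cell $(P\cap L_J)\rho(P^-\cap L_J)/Z_J$. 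Conjugation by a lift $\dot{\sigma}\in N_G(S)(k)$ transports $s_\alpha$ to the internal reflection $s_\beta = \dot\sigma^{-1}s_\alpha\dot\sigma\in W_J$, and the classical rank-1 Bruhat identity inside $L_J$ then shows that this internal cell grows from $\rho$ to $s_\beta\rho$ precisely when $\ell(s_\beta\rho)<\ell(\rho)$, matching the second case.

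In each enlarging scenario I will verify equality with the claimed target by a dimension count using \Cref{dimcomputation}. The contracted product $(P_\alpha\times_k P^-)\times_k^{P\times_k P^-} Y$ has dimension $\dim Y + \dim(P_\alpha/P) = \dim Y + d(s_\alpha)$; the additivity identities $d(\sigma)=d(s_\alpha)+d(s_\alpha\sigma)$ and $d(\rho) = d(s_\beta)+d(s_\beta\rho)$, valid precisely under the stated length hypotheses, show that the candidate target also has dimension $\dim Y + d(s_\alpha)$. Since the action morphism from the contracted product is surjective onto the irreducible closed target, dimension equality forces equality of images. The "otherwise" case is then immediate, as neither the base nor the fiber grows. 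For the birationality assertion, the dimension match makes the action morphism generically finite, and the explicit open parameterization from \Cref{structureoforbit} shows that a generic point of the target admits a unique preimage in the contracted product.

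The main obstacle will be the fiber-level analysis in the case $s_\alpha\sigma=\sigma s_\beta$: one must carry the rank-1 action of $P_\alpha$ through the contracted-product description of $\overline{X_J}$, correctly convert the external reflection $s_\alpha$ (lying outside $W_J$) into the internal one $s_\beta\in W_J$ via conjugation by $\dot\sigma$, and then reassemble the enlarged orbit inside $\overline{L_J/Z_J}$ into a genuine $(P\times_k P^-)$-orbit of $\overline{X_J}$ matching the stated formula. The birationality claim in particular requires verifying that the extra $P_\alpha$-direction does not generically coincide with a direction already contained in $P\times_k P^-$, which again relies on the explicit parameterization of \Cref{structureoforbit}.
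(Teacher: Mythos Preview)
Your strategy via $\pi_J$ and a dimension count genuinely differs from the paper's. The paper never separates base and fiber; instead it identifies the open piece $(Ps_\alpha P\times_kP^-)\times_k^{P\times_kP^-}O\cong (U_{(\alpha)}\dot s_\alpha,1)\times_k O$ and then uses the root-group identity $U_{(\alpha)}\dot s_\alpha\, U_{\sigma\rho}=U_{s_\alpha\sigma\rho}\dot s_\alpha$ (valid exactly when $l(s_\alpha\sigma\rho)<l(\sigma\rho)$) together with \Cref{structureoforbit} to recognise this as the parametrisation of the target orbit. That single computation yields the identification of $(P_\alpha\times_kP^-)\cdot Y$ and birationality simultaneously. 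Your containment-plus-dimension argument is a legitimate substitute in Cases~1 and~2: $(P_\alpha\times_kP^-)\cdot Y$ is closed and irreducible, it contains the orbit through $(\dot s_\alpha\dot\sigma\dot\rho,\dot\tau)\cdot b_J$, and \Cref{dimcomputation} with $d(s_\alpha)=d(s_\beta)$ forces equality.

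There is, however, a real gap in your ``otherwise'' case. Your dichotomy (``base grows'' versus ``base stable, so analyse the fiber via $s_\beta=\dot\sigma^{-1}s_\alpha\dot\sigma\in W_J$'') silently assumes that whenever the base is $P_\alpha$-stable one has $s_\alpha\sigma=\sigma s_\beta$ for some $\beta\in J$. This is false: when $s_\alpha\sigma\in W^J$ with $l(s_\alpha\sigma)>l(\sigma)$ the base $\overline{P\sigma P_J^-/P_J^-}$ is still $P_\alpha$-stable, yet $\sigma^{-1}(\alpha)\in\Phi^+\setminus\Phi_J$ is not a simple root of $L_J$, and your conjugation argument does not apply. (What is true in that subcase is that $(U_{(-\alpha)},1)\subset\Stab((\dot\sigma\dot\rho,\dot\tau)\cdot b_J)$ because $\rho^{-1}\sigma^{-1}U_{(-\alpha)}\sigma\rho\subset\mathscr{R}_u(P_J^-)$, but you have not said this.) Even in the subcase where $s_\beta$ does exist, you have only argued about a single fiber, not that $Y$ itself is stable. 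The paper avoids all of this by applying the already-proved Cases~1--2 to the closure of the orbit through $(s_\alpha\sigma\rho,\tau)\cdot b_J$: since $l(\sigma\rho)<l(s_\alpha\sigma\rho)$, that closure has $(P_\alpha\times_kP^-)$-saturation equal to $Y$, whence $Y$ is $(P_\alpha\times_kP^-)$-stable.

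Your birationality sketch is also incomplete. Equal dimensions give generic finiteness, but ruling out degree $>1$ requires exhibiting an open set over which the action map is an isomorphism; making ``unique preimage via \Cref{structureoforbit}'' precise amounts exactly to the identity $U_{(\alpha)}\dot s_\alpha\, U_{\sigma\rho}=U_{s_\alpha\sigma\rho}\dot s_\alpha$ that the paper writes down.
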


\begin{proof}
   We prove the statement for $P_{\alpha}\times_kP^-$. The proof for the opposite case is similar. 
   By relative Bruhat decomposition, c.f., \Cref{RelativeBruhatforgroup}, we have $P\cdot s_{\alpha}\cdot P=U_{(\alpha)}\cdot s_{\alpha}\cdot P$ is open in $P_{\alpha}$. Let $\dot{s}_{\alpha}, \dot{\sigma}, \dot{\tau}, \dot{\rho}$ be representatives of $s_{\alpha}, \sigma, \tau, \rho$ in $N_G(S)(k)$ respectively. Then, by applying \Cref{structureoforbit} to $O$, we can write
   \begin{equation}\label{1}
       ((Ps_{\alpha}P)\times_kP^{-})\times_k^{P\times_kP^-}O\simeq (U_{(\alpha)}\dot{s}_{\alpha},1) \times_k ((U_{\sigma\rho}\times_k \Big( P^-  \bigcap \tau P_J^-\tau^{-1}\Big))\cdot (\dot{\sigma} \dot{\rho}, \dot{\tau})\cdot b_J).
   \end{equation}
   When $l(s_{\alpha}\sigma\rho)<l(\sigma\rho)$, we have 
   $\Phi^+\bigcap (s_{\alpha}\sigma\rho\Phi^+)=(\alpha)\coprod s_{\alpha}(\Phi^+\bigcap (\sigma\rho)\Phi^+ )$ which implies 
   \[U_{(\alpha)}\dot{s}_{\alpha}U_{\sigma\rho}= U_{s_{\alpha}\sigma\rho}\dot{s}_{\alpha}.\]
   Composing with the action map, the right hand side of (\ref{1}) is mapped to 
   \begin{align*}
       &((U_{{\alpha}}\dot{s}_{\alpha}U_{\sigma\rho}) \times_k \Big( P^-  \bigcap \tau P_J^-\tau^{-1}\Big))\cdot (\dot{\sigma} \dot{\rho}, \dot{\tau})\cdot b_J\\
       =& (U_{s_{\alpha}\sigma\rho}\times_k \Big( P^-  \bigcap \tau P_J^-\tau^{-1}\Big))\cdot(\dot{s}_{\alpha}\dot{\sigma} \dot{\rho}, \dot{\tau})\cdot b_J\\
       =&(P\times_kP^-)\cdot(s_{\alpha}\sigma\rho,\tau)\cdot b_J
   \end{align*}
   which is an isomorphism by \Cref{structureoforbit}. Hence $(P_{\alpha}\times_kP^-)\cdot Y=\overline{(P\times_kP^-)\cdot (s_{\alpha}\sigma\rho,\tau)\cdot b_J}$ and the action map is birational. If $l(s_{\alpha}\sigma)<l(\sigma)$ then $s_{\alpha}\sigma\in W^J$ hence we are in case $1$. If $l(s_{\alpha}\sigma)>l(\sigma)$, because $l(s_{\alpha}\sigma\rho)<l(\sigma\rho)$, we have $s_{\alpha}\sigma=\sigma t$ for some $t\in W_J$. Since $l(s_{\alpha}w)=l(w)+1$, $t=s_{\beta}$ for some $\beta\in J$ and it satisfies $l(s_{\beta}\rho)<l(\rho)$ which gives case $2$.  

   It remains to consider case $3$, which occurs when  $l(s_{\alpha}\sigma\rho)>l(\sigma\rho)$ according to \Cref{reflectionandminlengthreps}. By applying the previous two cases to the orbit closure $\overline{(P\times_kP^-)\cdot(s_{\alpha}\sigma\rho,\tau)\cdot b_J} $, we get
   \[Y=(P_{\alpha}\times_kP^-)\cdot (\overline{(P\times_kP^-)\cdot(s_{\alpha}\sigma\rho,\tau)\cdot b_J})\] 
   hence $(P_{\alpha}\times_kP^-)\cdot Y=Y$.
\end{proof}

\begin{remark}
    The proof of \Cref{actrk1para} indicates that $(P_{\alpha}\times_kP^-)\cdot Y\neq Y$ happens precisely when $l(s_{\alpha}\sigma\rho)<l(\sigma\rho)$. If this is the case, $(P_{\alpha}\times_kP^-)\cdot Y=\overline{(P\times_k P^-)\cdot (s_{\alpha}\sigma\rho,\tau)\cdot b_{J}}$ always holds. We further divide it into two subcases because we want to keep track of how $s_{\alpha}\sigma\rho$ decomposes as a product of elements in $W^J$ and $W_J$.
\end{remark}

For any $G$-variety $Z$, we write 
$$\mathcal{P}(Z)\coloneq\{\text{closed}\; P\text{-stable subvarieties of}\; Z\},$$  
 $$\mathcal{P}_k(Z)\coloneq\{Y \in \mathcal{P}(Z)\vert \ \overline{P\cdot(Y(k))}=Y\},$$ 
 where $\overline{P\cdot(Y(k))}$ means the Zariski closure of $P\cdot(Y(k))$ in $Z$. Note that when $k$ is infinite, $\mathcal{P}_k(Z)$ is simply the set which consists of all $k$-dense closed $P$-stable subvarieties. Also note that $\mathcal{P}(Z)=\mathcal{P}_k(Z)$ when $k$ is an algebraically closed field. By abuse of notation, for any $G\times_k G$-stable subvariety $Z\subset \overline{G}$, we will use $\mathcal{P}(Z),\mathcal{P}_{k}(Z)$ to denote the sets of corresponding $P\times_kP^-$-stable subvarieties. Then by \Cref{borbitproposition} and \Cref{kptsoforbit}, if $Z=\overline{X_J}$ for $J\subset \Delta$, then $\mathcal{P}_k(\overline{X_J})$ consists of elements of the form $\overline{(P\times_kP^{-})\cdot(\sigma\rho,\tau)\cdot b_I}$ for some $I\subset J$, $\sigma,\tau\in W^I$ and $\rho\in W_I$.

\begin{definition}\label{kcancell}
    For a $G$-variety $Z$, we say that the action of $G$ is \textbf{$k$-cancellative} if for $Y_1, Y_2\subset \mathcal{P}_k(Z)$ and $\alpha\in \Delta$ such that $Y_1\neq Y_2$, $P_{\alpha}\cdot Y_1\neq Y_1$ and $P_{\alpha}\cdot Y_2 \neq Y_2$, then $P_\alpha \cdot Y_1 \neq P_{\alpha}\cdot Y_2$.
\end{definition}

When $k$ is algebraically closed (of characteristic zero), then the above definition is the same as \cite[1.2 Definition]{behaviouratinfinityofbruhat}. First we prove a consequence of cancellative action when the base field is algebraically closed.

\begin{lemma}\label{inclusionlemma}
    Suppose that $k$ is algebraically closed (hence $P=B$ is a Borel subgroup). Let $Z$ be a cancellative $G$-variety and let $Y_0\in \mathcal{P}(Z)$. Let $Q\supset B$ be a parabolic subgroup such that $Q\times^{B}Y_0\to Q\cdot Y_0$ is generically finite. Then for any $Y\in \mathcal{P}(Z)$ such that $Q\cdot Y=Q\cdot Y_0$, there exists $w\in W$ such $Y= \overline{B\cdot w\cdot Y_0}$. In particular, $Y_0\subset Y$.
\end{lemma}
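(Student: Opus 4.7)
The plan is to induct on the codimension $c \coloneq \dim(Q \cdot Y_0) - \dim Y \geq 0$, where the inequality follows from the inclusion $Y \subseteq Q \cdot Y = Q \cdot Y_0$. In the base case $c = 0$, irreducibility of $Q \cdot Y_0$ (as the image of $Q \times Y_0$) together with $Y \subseteq Q \cdot Y_0$ of equal dimension forces $Y = Q \cdot Y_0$. Generic finiteness of $Q \times^B Y_0 \to Q \cdot Y_0$ gives $\dim(Q \cdot Y_0) = l(w_Q) + \dim Y_0$, and restricting the action morphism to the dense Bruhat cell $B w_Q B \subseteq Q$ shows that $\overline{B w_Q Y_0}$ attains this full dimension; hence $Q \cdot Y_0 = \overline{B w_Q Y_0}$ and one takes $w = w_Q$.

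For the inductive step $c > 0$, $Y$ is properly contained in $Q \cdot Y$, so $Y$ is not $Q$-stable, and there exists a simple root $\alpha$ with $P_\alpha \subseteq Q$ and $P_\alpha \cdot Y \neq Y$. Put $Y' \coloneq P_\alpha \cdot Y$. By the rank-one action argument (cf.\ \Cref{actrk1para}), $Y' \in \mathcal{P}(Z)$, the map $P_\alpha \times^B Y \to Y'$ is birational so $\dim Y' = \dim Y + 1$, and $Q \cdot Y' = Q \cdot Y = Q \cdot Y_0$. The inductive hypothesis supplies $w' \in W$ with $Y' = \overline{B w' Y_0}$. Now the $P_\alpha$-stability of $Y'$, combined with the identity $P_\alpha \cdot \overline{B w' Y_0} = \overline{B w' Y_0 \cup B s_\alpha w' Y_0}$, forces $s_\alpha w' < w'$. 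Setting $w'' \coloneq s_\alpha w'$ and $Y'' \coloneq \overline{B w'' Y_0}$, direct Bruhat-cell computations give $P_\alpha \cdot Y'' = Y'$ and $\dim Y'' = \dim Y' - 1$; in particular $Y''$ is not $P_\alpha$-stable.

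Both $Y$ and $Y''$ lie in $\mathcal{P}(Z)$, neither is $P_\alpha$-stable, and $P_\alpha \cdot Y = Y' = P_\alpha \cdot Y''$. The cancellation property (\Cref{kcancell}) immediately yields $Y = Y''$, so $Y = \overline{B w'' Y_0}$, closing the induction. For the final assertion $Y_0 \subseteq Y$, I would argue by induction on $l(w)$: the base case $w = e$ is trivial since $Y_0$ is $B$-stable, and if $Y_0 \subseteq \overline{B v Y_0}$ and $s_\alpha v > v$, then $Y_0 \subseteq \overline{B v Y_0} \subseteq P_\alpha \cdot \overline{B v Y_0} = \overline{B s_\alpha v Y_0}$.

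The main obstacle I anticipate is the dimension bookkeeping, specifically establishing $\dim \overline{B v Y_0} = l(v) + \dim Y_0$ for the $v$'s arising in the induction, so that $\dim Y'' < \dim Y'$ and $Y''$ is genuinely non-$P_\alpha$-stable, thereby enabling the cancellation step. This dimension equality does not follow tautologically from generic finiteness at the top stratum, and would likely be derived as a preliminary consequence of generic finiteness propagated through the Bruhat stratification of $Q$, or proved inductively in tandem with the main argument.
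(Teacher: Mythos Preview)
Your approach is essentially the paper's, repackaged as a single induction: both arguments descend from $Q\cdot Y_0$ to $Y$ by successively applying cancellativity along rank-one parabolics. The paper organises it in two phases---first climb from $Y$ to $Q\cdot Y$ along some $w\in W_I$ (exactly your inductive construction of $\alpha$'s), then choose a reduced expression $w_{0,I}=s_{\alpha_1}\cdots s_{\alpha_n}$ extending $w=s_{\alpha_1}\cdots s_{\alpha_k}$ and peel off $s_{\alpha_1},\ldots,s_{\alpha_k}$ simultaneously from $Y$ and from $\overline{B\,w^{-1}w_{0,I}\,Y_0}$, using cancellativity at each step. The advantage of this layout is that generic finiteness of the Bott--Samelson maps $P_{\alpha_1}\times^B\cdots\times^B P_{\alpha_k}\times^B(\cdot)\to Q\cdot Y_0$ is established once at the outset, so each $P_{\alpha_i}$-step is automatically strict and no separate ``$s_\alpha w'<w'$'' verification is needed.

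Your anticipated obstacle is real, and your first proposed fix is the right one. The identity $\dim\overline{BvY_0}=l(v)+\dim Y_0$ holds for \emph{every} $v\in W_I$: factor $w_{0,I}=(w_{0,I}v^{-1})\cdot v$ with additive lengths, so that the hypothesised generically finite map $Q\times^B Y_0\to Q\cdot Y_0$ factors through $\overline{B(w_{0,I}v^{-1})B}\times^B\overline{BvY_0}$, forcing $\overline{BvB}\times^B Y_0\to\overline{BvY_0}$ to be generically finite as well. Two remarks: (i) you need this formula for $s_\alpha w'$ \emph{before} knowing $s_\alpha w'<w'$, since in the hypothetical case $s_\alpha w'>w'$ it is precisely $\dim\overline{B\,s_\alpha w'\,Y_0}=l(w')+1+\dim Y_0>\dim Y'$ that contradicts $P_\alpha$-stability of $Y'$---so prove it as a preliminary lemma rather than ``in tandem''; (ii) strengthen the inductive hypothesis to $w\in W_I$ (not just $w\in W$), which is preserved since $\alpha\in I$. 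Finally, your appeal to \Cref{actrk1para} for birationality of $P_\alpha\times^B Y\to P_\alpha Y$ is misplaced (that proposition is specific to $\overline{G}$); the general fact you need is simply that $\dim(P_\alpha/B)=1$, so $P_\alpha Y\neq Y$ forces $\dim(P_\alpha Y)=\dim Y+1$.
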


\begin{proof}
    Suppose that $Q$ corresponds to $I\subset \Delta$, then we have $Q=\overline{B\cdot w_{0,I}\cdot B}$. First, we show that there exists $w\in W_I$ such that $\overline{B\cdot w\cdot Y}=Q\cdot Y$ and $\overline{B\cdot w\cdot B}\times^BY\to \overline{B\cdot w\cdot Y}$ is generically finite. If $Y=Q\cdot Y$ then we can take $w=1$. When $Y\neq Q\cdot Y$, there exists $\alpha\in I$ such that $P_{\alpha}\cdot Y\neq Y$. Since we assume that $k$ is algebraically closed, in particular $G$ is split, we have $\dim(P_{\alpha})=\dim(B)+1$, which implies $\dim(P_{\alpha}Y)=\dim(Y)+1$ and $P_{\alpha}\times^BY\to P_{\alpha}\cdot Y$ must be generically finite. Note that we have $Q\cdot P_{\alpha} \cdot Y=Q\cdot Y$. By induction on $\mathrm{codim}_{QY}(Y)$, there exists $w_1\in W_I$ such that $Q\cdot Y=\overline{B\cdot w_1\cdot P_{\alpha}\cdot Y}$ and $\overline{B\cdot w_1\cdot B}\times^BP_{\alpha}\cdot Y\to Q\cdot Y$ is generically finite hence taking $w=w_1s_{\alpha_1}$ satisfies the conditions.
    
    Since we have $l(w_{0,I})=l(w)+l(w^{-1}w_{0,I})$, there exists a reduced expression $w_{0,I}=s_{\alpha_1}\ldots s_{\alpha_n}$, where $\alpha_i\in I$ for any $i$, such that $w=s_{\alpha_1}\dots s_{\alpha_t}$ is a reduced expression of $w$ for some $t$. Then the action maps \[P_{\alpha_1}\times^B\ldots\times^BP_{\alpha_t}\times^B\overline{B\cdot w^{-1}w_{0,I}\cdot Y_0}\to Q\cdot Y_0\]
    \[P_{\alpha_1}\times^B\ldots\times^BP_{\alpha_t}\times^BY\to Q\cdot Y\]
    are generically finite. For $1\leq i\leq t$, let $Y_i=P_{\alpha_i}\ldots P_{\alpha_t}\cdot\overline{B\cdot w^{-1}w_{0,I}\cdot Y_0}$ and let $Y_i^{\prime}=P_{\alpha_i}\ldots P_{\alpha_t}\cdot Y$. We prove inductively that $Y_i=Y_i^{\prime}$ for any $i$. If $i=1$ then both $Y_1^{\prime}$ and $Y_1$ equal to $Q\cdot Y$ hence the claim holds. Suppose that we already know $Y_{i}^{\prime}=Y_{i}$. The action maps 
    \[P_{i}\times^BY_{i+1}\to Y_i \text{ and } P_{i}\times^BY_{i+1}^{\prime}\to Y_i^{\prime}\] are generically finite. If $Y_{i+1}\neq Y_{i+1}^{\prime}$, then we will have $Y_i\neq Y_i^{\prime}$ because the action of $G$ is cancellative, hence contradicting our inductive hypothesis.
    Hence $Y_i=Y_i^{\prime}$ for any $i$. Taking $i=t+1$, we conclude that $Y=\overline{B\cdot w^{-1}w_{0,I}\cdot Y_0}$. In particular, we have $Y_0\subset Y$.
\end{proof}

\begin{proposition}\label{existkdensesub}
Let $\alpha\in \Delta$ be a simple root and let $Y\in \mathcal{P}_k(\overline{G})$ such that $(P_{\alpha}\times_kP^-)\cdot Y=Y$. Then
    \begin{enumerate}
        \item There exists a unique $Y_0\in \mathcal{P}_k(\overline{G})$ such that $(P_{\alpha}\times_kP^-)\cdot Y_0=Y$ and $Y_0\neq Y$. In particular, the action of $G\times_kG$ on $\overline{G}$ is $k$-cancellative.
        \item Suppose that $Y^{\prime}\in \mathcal{P}(\overline{G})$ satisfies $(P_{\alpha}\times_kP^-)\cdot Y^{\prime}\supset Y$, then $Y_0\subset Y^{\prime}$.
    \end{enumerate}
Similar statement holds if we replace $P_{\alpha}\times_kP^-$ by $P\times_kP_{\alpha}^-$.

\end{proposition}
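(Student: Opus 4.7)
My plan proceeds in three stages: establishing existence and uniqueness in part~(1), deducing $k$-cancellativity, and then using a descent argument to prove part~(2).

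For part~(1), I would write $Y = \overline{(P \times_k P^-) \cdot (\sigma\rho, \tau) \cdot b_I}$ using \Cref{borbitproposition} and \Cref{kptsoforbit}. The remark after \Cref{actrk1para} shows the hypothesis $(P_\alpha \times_k P^-) \cdot Y = Y$ is equivalent to $l(s_\alpha \sigma\rho) > l(\sigma\rho)$. I then take $Y_0 := \overline{(P \times_k P^-) \cdot (s_\alpha \sigma\rho, \tau) \cdot b_I}$, where $s_\alpha \sigma\rho$ is decomposed uniquely as a product of an element of $W^I$ and an element of $W_I$; according to whether $l(s_\alpha \sigma) < l(\sigma)$ or $s_\alpha \sigma = \sigma s_\beta$ for some $\beta \in I$, one of the first two cases of \Cref{actrk1para} applies and gives $(P_\alpha \times_k P^-) \cdot Y_0 = Y$. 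The strict length inequality, together with \Cref{theoremofgeometrization}, ensures $Y_0 \neq Y$.

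For uniqueness, any candidate $Y_0 \in \mathcal{P}_k(\overline{G})$ decomposes into $k$-irreducible components, each $(P \times_k P^-)$-stable by connectedness; irreducibility of $Y$ together with properness of the saturation map forces some component $Y_0^\circ$ to satisfy $(P_\alpha \times_k P^-) \cdot Y_0^\circ = Y$. Writing $Y_0^\circ = \overline{(P \times_k P^-) \cdot (\sigma_0 \rho_0, \tau_0) \cdot b_{I_0}}$ and reapplying \Cref{actrk1para}, the condition $Y_0^\circ \neq Y$ puts us into case~(1) or~(2), so $Y$ is determined by $Y_0^\circ$ as an explicit orbit closure; \Cref{theoremofgeometrization} then matches the parameters, forcing $I_0 = I$, $\tau_0 = \tau$, and $s_\alpha \sigma_0 \rho_0 = \sigma\rho$, uniquely recovering the orbit closure above. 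The $k$-cancellativity is then immediate: if $P_\alpha \cdot Y_1 = P_\alpha \cdot Y_2 = Z$ with $Y_1 \neq Y_2$ and both $Y_i \neq Z$, then $Z$ is $P_\alpha$-stable and uniqueness applied to $Z$ forces $Y_1 = Y_2$, a contradiction.

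For part~(2), the key is to replace $Y'$ by $Y^* := Y \cap Y'$, still $(P \times_k P^-)$-stable and closed. I first verify $(P_\alpha \times_k P^-) \cdot Y^* = Y$ by using $(P_\alpha \times_k P^-)$-stability of $Y$: any $y = g \cdot y' \in Y$ with $y' \in Y'$ satisfies $y' = g^{-1} y \in Y$, hence $y' \in Y^*$. After base change to $\overline{k}$, I decompose $Y^*_{\overline{k}}$ into irreducible components, each $(P \times_k P^-)_{\overline{k}}$-stable, and irreducibility of $Y_{\overline{k}}$ (which is the closure of a single $(P \times_k P^-)_{\overline{k}}$-orbit since the orbit data is $k$-rational) forces some component $Z$ to satisfy $(P_\alpha \times_k P^-)_{\overline{k}} \cdot Z = Y_{\overline{k}}$. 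Applying uniqueness from part~(1) over $\overline{k}$, whose ingredients \Cref{actrk1para} and \Cref{theoremofgeometrization} hold over any base field, either $Z = Y_{\overline{k}}$, giving $Y \subset Y'$ and thus $Y_0 \subset Y \subset Y'$ via \Cref{actrk1para}, or $Z = Y_{0, \overline{k}}$. In either case, $Y_{0, \overline{k}} \subset Y'_{\overline{k}}$ descends to $Y_0 \subset Y'$ over $k$. The statement for $P \times_k P_\alpha^-$ is entirely symmetric, using the opposite cases of \Cref{actrk1para}. The main obstacle I expect lies in this descent: an irreducible component of $Y^*_{\overline{k}}$ need not descend to $k$, so one cannot apply part~(1) over $k$ to $Y^*$ directly, and care is needed to verify that the uniqueness argument carries over when $P_{\overline{k}}$ is no longer a minimal parabolic of $G_{\overline{k}}$.
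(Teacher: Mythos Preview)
Your treatment of part~(1) matches the paper's: both construct $Y_0=\overline{(P\times_kP^-)\cdot(s_\alpha\sigma\rho,\tau)\cdot b_I}$ via \Cref{actrk1para}. You are more explicit than the paper about uniqueness, and your argument there is sound (once one notes, as you implicitly do, that irreducible elements of $\mathcal{P}_k(\overline{G})$ are exactly orbit closures with a $k$-point).

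The gap you flag in part~(2) is real, and it is precisely where your approach diverges from the paper's. Over $\bar k$, the minimal parabolic is the Borel $B$, and $(P_\alpha)_{\bar k}$ corresponds not to a single absolute simple root but to a subset of $\widetilde\Delta$ (namely $\widetilde\Delta_0$ together with all absolute roots restricting to $\alpha$). So part~(1) over $\bar k$ only furnishes uniqueness for each $B_\beta\times_{\bar k}B^-$ with $\beta\in\widetilde\Delta$, not for $(P_\alpha\times_kP^-)_{\bar k}$. Concretely, when $\dim((P_\alpha)_{\bar k}/P_{\bar k})\geq 2$ there are several $(P\times_kP^-)_{\bar k}$-stable irreducible closed subvarieties $Z\subsetneq Y_{\bar k}$ with $(P_\alpha\times_kP^-)_{\bar k}\cdot Z=Y_{\bar k}$, so the dichotomy ``$Z=Y_{\bar k}$ or $Z=(Y_0)_{\bar k}$'' fails. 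Your remark that ``\Cref{actrk1para} and \Cref{theoremofgeometrization} hold over any base field'' is true but does not help: \Cref{actrk1para} is stated for $P_\alpha$ with $\alpha$ a \emph{single} simple root relative to the \emph{minimal} parabolic, and that hypothesis is lost after base change.

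The paper circumvents this by proving a separate lemma (\Cref{inclusionlemma}) valid over algebraically closed fields for an \emph{arbitrary} parabolic $Q\supset B$: if the $G$-action is cancellative and $Q\times^B Y_0\to Q\cdot Y_0$ is generically finite, then every $Y\in\mathcal P(Z)$ with $Q\cdot Y=Q\cdot Y_0$ contains $Y_0$. The proof peels off simple reflections one at a time inside $W_Q$, using cancellativity at each step. This is the missing ingredient in your plan: you would need an inductive argument of this type, not a one-shot application of part~(1), to handle $(P_\alpha\times_kP^-)_{\bar k}$.
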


\begin{proof}
    We consider the $P_{\alpha}\times_kP^-$ case, the proof for opposite case is similar. Suppose that $(G\times_kG)\cdot Y=\overline{X_I}$ for a subset $I \subset \Delta$ then we may write $Y=\overline{(P\times_k P^-)\cdot (\sigma\rho, \tau)\cdot b_{I}}$ for unique $\sigma, \tau\in W^I$ and $\rho\in W_I$. Because $Y$ is stabilized by $P_{\alpha}\times_kP^-$, by Proposition \ref{actrk1para}, we have 
    \begin{itemize}
        \item either $l(s_{\alpha}\sigma)>l(\sigma)$ and $s_{\alpha}\sigma\in W^I$
        \item or $s_{a}\sigma=\sigma s_{\beta}$ for some $\beta \in I$ and $l(s_{\beta}\rho)>l(\rho)$.
    \end{itemize}
     We take $Y_0=\overline{(P\times_k P^-)\cdot (s_{\alpha}\sigma\rho,\sigma_2)\cdot b_{I}}$ in the first case and take $Y_0=\overline{(P\times_k P^-)\cdot (\sigma s_{\beta}\rho,\tau)\cdot b_{I}}$ in the second case. Then in both case we have $Y_0\neq Y$  and $(P_{\alpha}\times_kP^-)\cdot Y_0=Y$. Moreover, the map $(P_{\alpha}\times_kP^-)\times^{P\times_kP^-}_k Y_0\to Y$ is birational.

     For (2), let $Y^{\prime}\subset \mathcal{P}(\overline{G})$. To show $Y_0\subset Y^{\prime}$, it is enough to prove this after base change to $\bar{k}$. By  (1), the action of $(G\times_k G)_{\bar{k}}$ on $\overline{G
     }_{\bar{k}}$ is cancellative. Let $D\subset Y^{\prime}_{\bar{k}}$ be a closed subvariety such that $(P_{\alpha}\times_kP^-)_{\bar{k}}\cdot D=Y_{\bar{k}}$. Then we may apply Lemma \ref{inclusionlemma} to $(Y_0)_{\bar{k}},\  (P_{\alpha}\times_kP^-)_{\bar{k}},\ D$ and the inclusion follows.
\end{proof}

\begin{remark}
    The statement of the second part of Proposition \ref{existkdensesub} implies that, for any $Y\subset \mathcal{P}(\overline{G})$, the action map $(P_{\alpha}\times_kP^-)\times_k^{P\times_kP^-}Y\to (P_{\alpha}\times_kP^-)\cdot Y$ induces a surjection 
    \[\Big((P_{\alpha}\times_kP^-)\times_k^{P\times_kP^-}Y\Big)(k)\to ((P_{\alpha}\times_kP^-)\cdot Y)(k).\]
    This can fail for general $G$-variety. 
    
    For example, one can take $G=\GL_{2,k}$ and let $Z=G/T$ where $T=\mathrm{Res}_{k^{\prime}/k}(\mathbb{G}_{m,k})$ for some quadratic extension $k^{\prime}/k$. Then the $B_{\bar{k}}$-orbits on $Z_{\bar{k}}$ are in bijection with $T_{\bar{k}}$-orbits on $G_{\bar{k}}/B_{\bar{k}}\simeq \mathbb{P}_{1,\bar{k}}$, hence there are three of them: one open orbit and the other two orbits are closed. Note that the action of $G_{\bar{k}}$ on $Z_{\bar{k}}$ is \emph{not} cancellative. The two closed orbits over $\bar{k}$ are permuted by the Galois action. Hence, over $k$, we have $\mathcal{P}(Z)=\{Z,Y\}$ where $Y$ is the unique closed $B$-stable subvariety defined over $k$ whose base change to $\bar{k}$ is the union of two closed $B_{\bar{k}}$-orbits. Then we have $G\cdot Y=Z\in \mathcal{P}_k(Z)$  and the map $G\times^B Y\to Z$ is generically finite. However, $Y(k)=\emptyset$.
\end{remark}

\subsection{Case 1: comparison in a $G\times_k G$-orbit}
Now we are ready to describe the inclusion relation between different orbit closures. In this part, we treat the case where two $P\times_kP^-$-orbits lie in the same $G\times_kG$-orbit of $\overline{G}$.

\begin{theorem}\label{closureinsamegorbit}
    Let $J\subset \Delta$ be a set of simple roots and let $(\sigma_i\rho_i,\tau_i)$, $i=1,2$ be pairs of elements in $W\times W$ where $\sigma_i,\tau_i\in W^J$ and $\rho_i \in W_J$ for $i=1,2$. Let 
    \[Y_i\coloneq\overline{(P\times_k P^-)\cdot(\sigma_i\rho_i,\tau_i)\cdot b_{J}}\]
    be the corresponding $P\times_k P^-$-orbit closures in $\overline{X_J}$. Then 
    $Y_1\subset Y_2$ if and only if there exists $u\in W_J$ such that 
    \[\sigma_1\rho_1 u\geq \sigma_2\rho_2 \text{\ and\ } \tau_1\geq \tau_2 u^{-1}\]
\end{theorem}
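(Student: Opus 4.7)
The plan is to prove the two directions separately via inductions that repeatedly use \Cref{actrk1para} (minimal parabolic actions) together with \Cref{existkdensesub} (cancellative property) and \Cref{propositionofbasepoint} (diagonal $L_J$-invariance of $b_J$). The origin of the twist $u\in W_J$ in the statement is precisely the identity $(\sigma\rho,\tau)\cdot b_J=(\sigma\rho u,\tau u)\cdot b_J$ valid for any $u\in W_J$, which provides the flexibility needed when comparing the non-canonical representatives of $Y_1$ and $Y_2$.

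For the if direction, I would fix $u\in W_J$ satisfying the two Bruhat inequalities and induct on the integer
\[N=\bigl(l(\sigma_1\rho_1 u)-l(\sigma_2\rho_2)\bigr)+\bigl(l(\tau_1)-l(\tau_2 u^{-1})\bigr)\geq 0.\]
When $N=0$, both inequalities become equalities, so $\sigma_1\rho_1 u=\sigma_2\rho_2$ and $\tau_1 u=\tau_2$, and the diagonal $L_J$-invariance gives
\[(\sigma_1\rho_1,\tau_1)\cdot b_J=(\sigma_1\rho_1 u,\tau_1 u)\cdot b_J=(\sigma_2\rho_2,\tau_2)\cdot b_J,\]
forcing $Y_1=Y_2$. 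For $N>0$, at least one strict inequality holds, and by the lifting property of the Bruhat order I would select a simple reflection $s_\alpha\in W$ that reduces the length on the strict side while preserving both Bruhat inequalities (possibly after updating the twist $u$). Applying \Cref{actrk1para} with this $s_\alpha$ replaces $Y_1$ by a strictly larger orbit closure $Y_1'\supsetneq Y_1$; re-expressing its $W$-data in canonical $W^J\cdot W_J\cdot W^J$-form according to the case distinction of \Cref{actrk1para} and updating $u$ accordingly, I would verify that the hypothesis still holds for $Y_1'\subset Y_2$ with $N'<N$. The inductive hypothesis then yields $Y_1'\subset Y_2$, hence $Y_1\subset Y_2$.

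For the only if direction, suppose $Y_1\subset Y_2$. Pushing forward along the $(G\times_k G)$-equivariant projection $\pi_J\colon\overline{X_J}\to G/P_J^-\times_k G/P_J$ (\Cref{borbitproposition}) and applying the Bruhat closure relation on partial flag varieties (a parabolic form of \Cref{RelativeBruhatforgroup}(3)) immediately yields $\sigma_1\geq\sigma_2$ and $\tau_1\geq\tau_2$ in the Bruhat order on $W^J$. To extract the twist $u$, I would restrict to the $\pi_J$-fiber over $(\dot\sigma_1 P_J^-,\dot\tau_1 P_J)$, which by \Cref{splitproperties}(3) and Galois descent is isomorphic to the wonderful compactification $\overline{L_J/Z_J}$ of the adjoint reductive group $L_J/Z_J$. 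The intersections $Y_i\cap(\text{fiber})$ then become orbit closures under a minimal parabolic pair of $L_J/Z_J$; the nestedness of these intersections, combined with the relative Bruhat decomposition for the smaller group $L_J/Z_J$ (\Cref{RelativeBruhatforgroup}(3) at the $W_J$-level), exhibits the element $u\in W_J$ satisfying the required inequalities.

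The main obstacle will be the bookkeeping in the inductive step of the if direction: after applying \Cref{actrk1para} with a simple reflection $s_\alpha$ — especially in Case 2, where a reflection on the left of $\sigma_1$ gets traded for a reflection $s_\beta$ on the right of $\rho_1$ — the enlarged orbit closure's $W$-data must be put back into canonical $(W^J,W_J,W^J)$-form and the twist $u$ adjusted compatibly, so that both Bruhat inequalities are preserved and the strict decrease $N'<N$ is genuinely achieved. Verifying systematically that the simple reflection provided by the lifting lemma always falls into Case 1 or Case 2 of \Cref{actrk1para} is the technical core of the argument.
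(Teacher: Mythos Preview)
Your ``if'' direction takes a different route from the paper but is plausible. The paper instead fixes a reduced word $s_1\cdots s_n$ for $(\sigma_2\rho_2 w_0,\tau_2 w_{0,J} w_0)$ in $W\times W$ and proves directly (by induction on $n$, using \Cref{existkdensesub}) that any $Y\in\mathcal P_k(\overline{X_J})$ with $(G\times_k G)\cdot Y=\overline{X_J}$ lies in $Y_2$ if and only if $Y$ is obtained from the minimal orbit closure $\overline{(P\times_k P^-)\cdot(w_0,w_0 w_{0,J})\cdot b_J}$ by applying a subsequence of the rank-one parabolics $Q_i$; both directions of the theorem then fall out by translating subword~$\leftrightarrow$~Bruhat inequality. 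Your descending induction can presumably be made to work, but beware the step you flag yourself: choosing $s_\alpha$ from a Bruhat chain $\sigma_1\rho_1 u>s_\alpha\sigma_1\rho_1 u\geq\sigma_2\rho_2$ gives $l(s_\alpha\sigma_1\rho_1 u)<l(\sigma_1\rho_1 u)$, not $l(s_\alpha\sigma_1\rho_1)<l(\sigma_1\rho_1)$, and only the latter lands you in Case~1 or~2 of \Cref{actrk1para}. You must update $u$ simultaneously and recheck both inequalities each time; the paper's subword formulation sidesteps this bookkeeping entirely.

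Your ``only if'' direction has a genuine gap. The fiber $\pi_J^{-1}(\dot\sigma_1 P_J^-,\dot\tau_1 P_J)\cong\overline{L_J/Z_J}$ does meet $Y_1$ in the closure of a single Levi Bruhat cell (this is essentially the computation of $F$ in the proof of \Cref{borbitproposition}). But when $(\sigma_1,\tau_1)\neq(\sigma_2,\tau_2)$, that fiber sits over a \emph{boundary} point of the open Schubert cell of $\pi_J(Y_2)$, and there is no reason for $Y_2\cap\pi_J^{-1}(\dot\sigma_1 P_J^-,\dot\tau_1 P_J)$ to be a single Levi orbit closure; in general it is a union of such, and deciding \emph{which} $W_J$-parameters occur is precisely the content of the theorem you are trying to prove. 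So the proposed reduction is circular. The paper avoids this: once the subword characterization is in hand, one reads off $x\leq\sigma_2\rho_2 w_0$ and $y\leq\tau_2 w_{0,J} w_0$ from the subsequence, identifies $(xw_0,yw_0 w_{0,J})=(\sigma_1\rho_1 u,\tau_1 u)$ for a unique $u\in W_J$ via \Cref{borbitproposition}, and then converts $\tau_1 u w_{0,J}\geq\tau_2 w_{0,J}$ into $\tau_1\geq\tau_2 u^{-1}$ using the elementary fact that $a\geq b$ and $l(sb)<l(b)$ imply $sa\geq sb$.
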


\begin{proof}
    Let $(\sigma_2\rho_2w_0,\tau_2w_{0,J}w_0)=s_1\ldots s_n$ be a reduced expression in $W\times W$. For each $1\leq i\leq n$, denote $Q_i\coloneq P_{\alpha_i}\times_k P^-$ if $s_i=(s_{\alpha_i},1)$ and $Q_i\coloneq P\times_kP_{\alpha_i}^-$ if $s_i=(1,s_{\alpha_i})$, where $\alpha_i\in \Delta$ is a simple root. Then, by \Cref{actrk1para}, we have 
    \[Y_2=Q_1\ldots Q_n\cdot\overline{(P\times_kP^-)\cdot (w_0,w_0w_{0,J})\cdot b_J}.\]
    and the action map \[Q_1\times_k^{P\times_kP^-}\ldots\times_k^{P\times_kP^-}Q_n\times_k^{P\times_kP^-} \overline{(P\times_kP^-)\cdot (w_0,w_0w_{0,J})\cdot b_J}\to Y_2\]
    is birational.
    
    First we claim that, for $Y\in \mathcal{P}_k(\overline{X_J})$ such that $(G\times_kG)\cdot Y=\overline{X_J}$, we have $Y\subset Y_2$ if and only if there exists a subsequence $1\leq i_1<\ldots<i_t\leq n$ (here we allow empty sequence) such that 
    \[Y=Q_{i_1}\ldots Q_{i_t}\cdot \overline{(P\times_kP^-)\cdot (w_0,w_0w_{0,J})\cdot b_J}.\]
    For the "if" part, notice that $Q_i\cdot Z\supset Z$ for any $Z\subset \overline{G}$, hence the result follows from induction on $k$. So it remains to show the “only if” part.
    
    We prove the "only if" part by induction on $n$. When $n=0$ then the claim holds because, by \Cref{dimcomputation},  $\overline{(P\times_kP^-)\cdot(w_0,w_0w_{0,J})\cdot b_J}$ has the smallest dimension among all  $Y\in \mathcal{P}_k(\overline{X_J})$ such that $(G\times_k G)\cdot Y=\overline{X_J}$. When $n>0$, let 
    \[Y_2^{\prime}=Q_2\ldots Q_n\cdot \overline{(P\times_kP^-)\cdot (w_0,w_0w_{0,J})\cdot b_J}.\] Then we have $Y_2^{\prime}\neq Y_2 $  and $Q_1\cdot Y_2^{\prime}=Y_2$. Suppose that $Y\in \mathcal{P}_k(\overline{X_J})$ is an element such that $(G\times_kG)\cdot Y=\overline{X_J}$ and $Y\subset Y_2$. If $Q_1\cdot Y\neq Y$, then by \Cref{existkdensesub}, $Y\subset Y_2^{\prime}$. Therefore, the claim follows from the inductive hypothesis in this case. If $Q_1\cdot Y= Y$, let $Y_0$ be the unique element in $\mathcal{P}_k(\overline{G})$ such that $Q_1\cdot Y_0=Y$ and $Y_0\neq Y$. Then $Y_0\subset Y_2^{\prime}$ and  $(G\times_kG)\cdot Y_0=(G\times_k G)\cdot Y=\overline{X_J}$. Hence, we can apply the inductive hypothesis to $Y_0$ to deduce the claim.
    
     Suppose that $Y_1\subset Y_2$ and let $s_{i_1},\ldots,s_{i_t}$ be a sequence of simple reflections from the previous claim.  Choose a sequence such that $k$ is minimal, then the action map
    \[Q_{i_1}\times_k^{P\times_kP^-}\ldots \times_k^{P\times_kP^-}Q_{i_t}\times_k^{P\times_{k}P^-} \overline{(P\times_kP^-)\cdot (w_0,w_0w_{0,J})\cdot b_J}\to Y_2\]
    is birational by \Cref{actrk1para}. Let $(x,y)\coloneq s_{i_1}\ldots s_{i_t}\in W\times W$, which is a reduced expression by the minimality of $t$. Therefore, we have $x\leq \sigma_2\rho_2w_0,\  y\leq \tau_2w_{0,J}w_0$ and $Y_{1}=\overline{(P\times_k P^-)\cdot (xw_0,yw_{0}w_{0,J})\cdot b_J}$ by \Cref{actrk1para}. By the uniqueness part of \Cref{borbitproposition}, there exists $u\in W_J$ such that $xw_0=\sigma_1\rho u$ and $yw_{0}w_{0,I}=\tau_1u$. Then we have $\sigma_1\rho_1u=xw_0\geq \sigma_2\rho_2$ and $\tau_1uw_{0,J}=yw_0\geq \tau_2w_{0,J}$.
    
    To show the "if" part of \Cref{closureinsamegorbit} holds, it remains to show that the latter inequality is equivalent to $\tau_1\geq \tau_2u^{-1}$. Notice that for any $a,b\in W$ such that $a\geq b$, if $s\in W$ is a simple reflection that satisfies $l(sb)<l(b)$, then we have $sa\geq sb$. Indeed, if $l(sa)>l(a)$ then we have $sa\geq a\geq sb$. If $l(sa)<l(a)$, choose a reduced expression $a=s_1\ldots s_n$ with $s_1=s$. Since $b\leq a$, there is a reduced subword such that $b=s_{i_1}\ldots s_{i_t}$. If $i_1=1$ then we have $sb=s_{i_2}\ldots s_{i_t}\leq s_2\ldots s_n=sa$. If $i_1\neq 1$, then we have  $b\leq sa$ hence $sb\leq sa$ as well. 
    Now we fix a reduced expression $uw_{0,J}=s_1\ldots s_t$. Then apply the previous claim $t$-times implies $\tau_1\geq \tau_2u^{-1}$.

    The "only if" part of \Cref{closureinsamegorbit} follows by reversing the above process.
\end{proof}

\subsection{Case 2: comparison in two $G\times_k G$-orbits}
In this part, we treat the case where two $P\times_kP^-$-orbits lie in two different $G\times_kG$-orbits of $\overline{G}$ respectively.

For any $P\times_kP^-$-stable closed subscheme $D\subset \overline{G}$, we denote $$\mathcal{C}(D) \coloneq \bigcup_{Y\in\mathcal{P}_k(\overline{ G}), Y\subset D }Y.$$ 
Note that $\mathcal{C}(D)$ is just $\overline{D(k)}$ when $k$ is an infinite field. We introduce this notation in order to treat both finite fields and infinite fields cases uniformly.

Let $I\subset \Delta$ and let $Z=\overline{X_I}$ be the corresponding orbit closure. By definition, any $Y_0\in \mathcal{P}_k(Z)$ such that $Y_0\subset Y$ satisfies $Y_0\subset \mathcal{C}(Y\bigcap Z)$. To understand the inclusion relation between orbit closures, in view of \Cref{closureinsamegorbit}, it is enough to describe the irreducible components of $\mathcal{C}(Y\bigcap Z)$. When $k$ is algebraically closed, such a description is first obtained in \cite[2.1 Theorem]{behaviouratinfinityofbruhat}. Our next goal is to generalize Brion's result to arbitrary base fields.

\begin{theorem}\label{irrecompofintersection}
    Suppose that $Y=\overline{(P\times_k P^-)\cdot (\sigma\rho,\tau)\cdot b_{J}}$  where $\sigma,\tau \in W^J $ and  $\rho\in W_J$. Let $I\subset \Delta$ be a subset and let $Z=\overline{X_I}$ be the corresponding $G\times_k G$-orbit closure. We have  
    \begin{enumerate}
        \item If $Y\bigcap X_I\neq \emptyset$, then $I\subset J$.
        \item Suppose that $I \subset J \subset \Delta$. Then we have a decomposition into irreducible components: 
    \[\mathcal{C}\Big(Y\bigcap Z\Big)=\bigcup \overline{(P\times_kP^-)\cdot(\sigma\rho v,\tau v)\cdot b_I} \]
    where the union is over all $v\in W_J\bigcap W^I$ satisfying $l(\sigma\rho)=l(\sigma\rho v)+l(v)$. 
    
    Moreover, each $\overline{(P\times_kP^-)\cdot (\sigma\rho v,\tau v)\cdot b_I}$ that appears in the above decomposition is an irreducible component of $Y\bigcap Z$.
    \end{enumerate}

\end{theorem}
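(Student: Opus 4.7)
My plan is to prove (1) directly from the stratification of $\overline{G}$, and (2) by establishing the two set-theoretic inclusions separately, then deducing the irreducible component claim via dimension counting.

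For (1), since $Y$ is the Zariski closure of a $(P \times_k P^-)$-orbit passing through a point of $X_J$, we have $Y \subset \overline{X_J}$. A non-empty intersection $Y \cap X_I$ therefore forces $X_I \cap \overline{X_J} \neq \emptyset$, so by the $(G \times G)$-orbit stratification one has $X_I \subset \overline{X_J}$. Applying \Cref{splitproperties}(2), descended to $k$ via \Cref{galoisdescent}, yields $I \subset J$.

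For the inclusion $\supset$ in (2), fix an admissible $v$ (so $v \in W_J \cap W^I$ with $l(\sigma\rho) = l(\sigma\rho v) + l(v)$) and construct an explicit cocharacter-based degeneration. Choose $\mu \in X_{\ast}(S)$ adapted to the pair $(I, J)$, so that $\mu(t)$ degenerates between $b_J$ and $b_I$ along $\widetilde{S}$, and consider the curve $t \mapsto (\sigma\rho \cdot v\mu(t)v^{-1}, \tau) \cdot b_J$. Since $v\mu(t)v^{-1} \in S \subset P$, this curve lies in the $(P \times P^-)$-orbit of $(\sigma\rho, \tau) \cdot b_J$ for $t \ne 0$; using that $v \in W_J$ lifts to $\Diag(L_J) \subset \Stab(b_J)$, the limit at $t = 0$ can be computed to equal $(\sigma\rho v, \tau v) \cdot b_I$. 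The length condition $l(\sigma\rho) = l(\sigma\rho v) + l(v)$ is precisely what ensures the conjugated cocharacter stays in the correct Bruhat position so that the limit does not overshoot into a different cell. Taking closure yields the desired inclusion.

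For the inclusion $\subset$, I induct on the Bruhat codimension $d(\sigma) + d(\tau) + d(\rho)$ using \Cref{actrk1para}. The base case $(\sigma, \rho, \tau) = (1, 1, 1)$ has $Y = \overline{X_J}$, in which $\mathcal{C}(Y \cap \overline{X_I}) = \overline{X_I} = \overline{(P \times_k P^-) \cdot b_I}$ and only $v = 1$ is admissible, matching the formula. For the inductive step, pick a simple root $\alpha \in \Delta$ such that $(P_\alpha \times_k P^-) \cdot Y \supsetneq Y$ or $(P \times_k P_\alpha^-) \cdot Y \supsetneq Y$ (such $\alpha$ exists whenever the Bruhat data is non-trivial), producing a strictly larger orbit closure $Y'$ with strictly smaller Bruhat length. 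The inductive hypothesis describes $\mathcal{C}(Y' \cap \overline{X_I})$, and \Cref{existkdensesub}(2) transfers this description back: any $k$-dense $(P \times_k P^-)$-orbit closure in $\mathcal{C}(Y \cap \overline{X_I})$ enlarges under the rank-one parabolic into $\mathcal{C}(Y' \cap \overline{X_I})$, and a combinatorial analysis of how the admissible $v$-set transforms under multiplication by $s_\alpha$ determines which orbits appear in $\mathcal{C}(Y \cap \overline{X_I})$. Finally, to verify that each listed orbit closure is an irreducible component, I compute $\mathrm{codim}_{\overline{X_I}}$ of each $\overline{(P \times_k P^-) \cdot (\sigma\rho v, \tau v) \cdot b_I}$ via \Cref{dimcomputation} applied to the canonical form $(\sigma'\rho', \tau')$ with $\sigma', \tau' \in W^I$ and $\rho' \in W_I$; the length condition on $v$ implies each attains the maximal possible dimension, while the uniqueness of canonical forms from \Cref{borbitproposition} ensures distinct $v$'s give distinct components.

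The main obstacle is the combinatorial bookkeeping in the inductive step of $\subset$: tracking how the constraint $l(\sigma\rho) = l(\sigma\rho v) + l(v)$ evolves when $\sigma$ or $\tau$ is multiplied by a simple reflection requires a careful case analysis, especially the subcase $s_\alpha \sigma = \sigma s_\beta$ of \Cref{actrk1para} where the reflection crosses between $W^I$ and $W_I$. A secondary challenge is the non-algebraically closed base field, which forces systematic use of \Cref{existkdensesub} at each inductive step to guarantee $k$-density of the orbit closures being manipulated, rather than freely working with $\bar{k}$-points as in Brion's original proof.
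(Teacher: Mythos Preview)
Your outline for (1) and the overall inductive skeleton for (2) match the paper's, and your degeneration idea for the inclusion $\supset$ is a legitimate shortcut: in fact it is exactly the curve the paper uses later in \Cref{6.4} for the topological closure, and it works just as well for the Zariski closure. Note, however, that the length condition on $v$ plays no role in that containment. The curve $t\mapsto(\dot\sigma\dot\rho\dot v\,\mu(t),\dot\tau\dot v)\cdot b_J$ lies in $O$ for every $t\neq 0$ and specializes to $(\sigma\rho v,\tau v)\cdot b_I$ for \emph{any} $v\in W_J\cap W^I$; the length condition only becomes relevant when you ask whether the resulting orbit closure is maximal.

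There are two genuine gaps in the rest of your plan.

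First, in the inductive step for $\subset$ you choose $\alpha$ so that $(P_\alpha\times_kP^-)\cdot Y\neq Y$. But given an arbitrary irreducible component $C'$ of $\mathcal{C}(Y\cap Z)$, there is no reason that this particular $\alpha$ also moves $C'$; if $(P_\alpha\times_kP^-)\cdot C'=C'$ your transfer step via \Cref{existkdensesub}(2) gives nothing. The paper instead first proves that $C'$ is not $G\times_kG$-stable (which itself requires an auxiliary induction on $Z$ to rule out $(G\times_kG)\cdot C'\subsetneq Z$), then chooses $\alpha$ that moves $C'$, and only afterwards checks that this $\alpha$ also moves $Y$. The two choices are not interchangeable.

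Second, and more seriously, you never establish that $Y$ and $Z$ intersect properly in $\overline{X_J}$ (\Cref{properintersection}). Your dimension computation via \Cref{dimcomputation} correctly shows that each $C_v$ has $\mathrm{codim}_Z(C_v)=\mathrm{codim}_{\overline{X_J}}(Y)$, but to conclude that $C_v$ is an irreducible component of $Y\cap Z$ (rather than merely of $\mathcal{C}(Y\cap Z)$) you need to know that no component of $Y\cap Z$ has larger dimension. This is a nontrivial geometric fact, proved in the paper by base-changing to $\bar k$ and arguing with colors of the spherical variety; it is also what the paper uses to show $C'$ is not $G\times_kG$-stable in the step above. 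Without it, the ``moreover'' clause of the theorem does not follow from your argument.
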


First, we prove the (1) part of \Cref{irrecompofintersection}.

\begin{lemma}
    If $I\subset \Delta$ is a subset such that $I \not \subset J$. Then $Y\bigcap X_I=\emptyset$.
\end{lemma}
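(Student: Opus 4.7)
The plan is to reduce to the split case over $k_s$ and appeal to the orbit structure in \Cref{splitproperties}~(2). First, I would note that since $(\sigma\rho,\tau)$ is represented in $N_G(S)$, the orbit $(P\times_k P^-)\cdot(\sigma\rho,\tau)\cdot b_J$ is contained in $(G\times_k G)\cdot b_J = X_J$, hence $Y \subset \overline{X_J}$. It therefore suffices to prove $\overline{X_J}\bigcap X_I=\emptyset$ whenever $I\not\subset J$.

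The crux of the argument is to translate the rational base points $b_K$ (for $K\subset \Delta$) into the split base points $z_{\widetilde{K}}$ (for $\widetilde{K}\subset \widetilde{\Delta}$) appearing in \Cref{splitproperties}~(2). For $K\subset \Delta$, set $\widetilde{I}(K)\coloneq \{r\in \widetilde{\Delta}\setminus \widetilde{\Delta}_0\mid r|_S\in K\}$. Using the defining equations of $\widetilde{S}_{k_s}$ inside $\prod_{\widetilde{\Delta}\setminus\widetilde{\Delta}_0}\mathbb{A}_{1,k_s}\times\prod_{\widetilde{\Delta}_0}\mathbb{G}_{m,k_s}$ (namely that coordinates in $\widetilde{\Delta}_0$ equal $1$ and coordinates in the same Galois orbit of $\widetilde{\Delta}\setminus\widetilde{\Delta}_0$ are identified), one checks directly that $b_K$ has coordinate $0$ precisely at the indices in $\widetilde{I}(K)$ and coordinate $1$ elsewhere. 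Thus, after base change to $k_s$, the point $b_K$ is identified with $z_{\widetilde{I}(K)}$, and consequently $(X_K)_{k_s}=X_{\widetilde{I}(K),k_s}$ in the notation of \Cref{splitproperties}~(2).

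The assignment $K\mapsto \widetilde{I}(K)$ is clearly inclusion-preserving and injective, so $I\not\subset J$ forces $\widetilde{I}(I)\not\subset \widetilde{I}(J)$. By \Cref{splitproperties}~(2) this gives $X_{\widetilde{I}(I),k_s}\not\subset \overline{X_{\widetilde{I}(J),k_s}}$, and since $X_{\widetilde{I}(I),k_s}$ is a single $(G_{k_s}\times_{k_s}G_{k_s})$-orbit while $\overline{X_{\widetilde{I}(J),k_s}}$ is $(G_{k_s}\times_{k_s}G_{k_s})$-stable, the intersection $\overline{X_{\widetilde{I}(J),k_s}}\bigcap X_{\widetilde{I}(I),k_s}$ is empty. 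Emptiness descends from $k_s$ back to $k$, so $\overline{X_J}\bigcap X_I=\emptyset$, which combined with $Y\subset\overline{X_J}$ yields $Y\bigcap X_I=\emptyset$. I do not anticipate any real obstacle here: once the identification of $b_K$ with $z_{\widetilde{I}(K)}$ is recorded, the statement is an immediate consequence of the split orbit closure relations.
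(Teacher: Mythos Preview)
Your proof is correct and follows essentially the same route as the paper: both reduce to showing $\overline{X_J}\cap X_I=\emptyset$ and then use that $X_I$ is a single $(G\times_k G)$-orbit while $\overline{X_J}$ is $(G\times_k G)$-stable, together with the closure relation $X_I\subset\overline{X_J}\Leftrightarrow I\subset J$ inherited from the split case. The only difference is that you spell out the identification $b_K\leftrightarrow z_{\widetilde{I}(K)}$ and the injectivity of $K\mapsto\widetilde{I}(K)$ explicitly, whereas the paper invokes the closure relation over $k$ directly (leaving the descent implicit).
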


\begin{proof}
Suppose that $Y\bigcap X_I\neq \emptyset$. Since $Y\subset \overline{X_J}$, this implies $\overline{X_J}\bigcap X_I\neq \emptyset$. Since $\overline{X_J}\bigcap X_I$ is $G\times_k G$-stable, we must have $X_I\subset \overline{ X_J}$, a contradiction.
\end{proof}
 
Next, we prove a lemma about how irreducible components of $\mathcal{C}(Y\bigcap Z)$ behave under the action of parabolic subgroups, which will be used in the proof of (2) of \Cref{irrecompofintersection}.

\begin{lemma}\label{remainirrecomp}
    Retain the notation of Theorem \ref{irrecompofintersection}. Suppose that $C\subset  \mathcal{C}(Y\bigcap Z)$ is an irreducible component and $\alpha\in \Delta$ is a simple root such that $(P_{\alpha}\times_k P^-)\cdot C\neq C$. Then $(P_{\alpha}\times_kP^{-})\cdot C$ is an irreducible component of $\mathcal{C}((P_{\alpha}\times_kP^{-}\cdot Y)\bigcap Z)$. The similar statement holds if we replace $P_{\alpha}\times_kP^-$ by $P\times_kP_{\alpha}^-$.
\end{lemma}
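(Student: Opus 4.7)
The idea is to transport the irreducible component $C$ via the birational action map from \Cref{actrk1para} together with the cancellativity/predecessor information in \Cref{existkdensesub}. Set $D \coloneq (P_{\alpha} \times_k P^-) \cdot C$; I will prove that $D$ is an irreducible component of $\mathcal{C}((P_{\alpha} \times_k P^-) \cdot Y \bigcap Z)$.

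\textbf{Step 1 (containment and $k$-density).} I first show that $D$ is irreducible, closed, and $(P_{\alpha} \times_k P^-)$-stable; this is immediate because $C$ is irreducible closed $P \times_k P^-$-stable and $P_{\alpha}$ is a connected parabolic. The inclusion $D \subseteq (P_{\alpha} \times_k P^-) \cdot Y$ is clear, and $D \subseteq Z$ follows from $C \subseteq Z$ together with the $(G \times_k G)$-stability of $Z$. To place $D$ in $\mathcal{P}_k(\overline{G})$, I use that $s_{\alpha}$ admits a representative in $N_G(S)(k)$ (as $S$ is $k$-split), so $P_{\alpha}(k)$ meets both strata of the Bruhat decomposition $P_{\alpha} = P \sqcup (U_{(\alpha)} s_{\alpha} P)$; combined with $\overline{(P \times_k P^-) \cdot C(k)} = C$, this yields $\overline{(P \times_k P^-) \cdot D(k)} = D$, so $D \subseteq \mathcal{C}((P_{\alpha} \times_k P^-) \cdot Y \bigcap Z)$.

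\textbf{Step 2 (reduction via the predecessor).} Let $D'$ be any irreducible element of $\mathcal{P}_k(\overline{G})$, $P \times_k P^-$-stable, with $D \subseteq D' \subseteq (P_{\alpha} \times_k P^-) \cdot Y \bigcap Z$. By Step 1 applied to $D'$ in place of $C$, the enlargement $(P_{\alpha} \times_k P^-) \cdot D'$ is still in $\mathcal{P}_k$ and in the intersection, so without loss of generality $(P_{\alpha} \times_k P^-) \cdot D' = D'$. \Cref{existkdensesub}(1) then provides a unique predecessor $D'_0 \in \mathcal{P}_k(\overline{G})$ with $D'_0 \subsetneq D'$ and $(P_{\alpha} \times_k P^-) \cdot D'_0 = D'$. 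Since $D' \subseteq (P_{\alpha} \times_k P^-) \cdot Y$, applying \Cref{existkdensesub}(2) with $Y' = Y$ yields $D'_0 \subseteq Y$; combined with $D'_0 \subseteq D' \subseteq Z$, I conclude $D'_0 \subseteq \mathcal{C}(Y \bigcap Z)$. As $D'_0$ is irreducible, it lies inside some irreducible component $C^{\ast}$ of $\mathcal{C}(Y \bigcap Z)$.

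\textbf{Step 3 (conclusion and symmetry).} If $C^{\ast} = C$, then $D' = (P_{\alpha} \times_k P^-) \cdot D'_0 \subseteq (P_{\alpha} \times_k P^-) \cdot C = D$, hence $D = D'$. Otherwise $C \neq C^{\ast}$: if $C^{\ast}$ were $(P_{\alpha} \times_k P^-)$-stable then the chain $C \subseteq D \subseteq D' \subseteq (P_{\alpha} \times_k P^-) \cdot C^{\ast} = C^{\ast}$ would force $C = C^{\ast}$ by the maximality of irreducible components of $\mathcal{C}(Y \bigcap Z)$, contradicting $C \neq C^{\ast}$; hence $C^{\ast}$ is also not $(P_{\alpha} \times_k P^-)$-stable. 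The $k$-cancellativity from \Cref{existkdensesub}(1) then gives $(P_{\alpha} \times_k P^-) \cdot C \neq (P_{\alpha} \times_k P^-) \cdot C^{\ast}$; tracking the generic point of $C$ through the birational description of the action supplied by \Cref{actrk1para}, namely via the explicit factorization $P_{\alpha} = P \sqcup (U_{(\alpha)} s_{\alpha} P)$, rules out the possibility $D \subsetneq (P_{\alpha} \times_k P^-) \cdot C^{\ast}$, contradicting $C \neq C^{\ast}$. The analogous statement for $P \times_k P^-_{\alpha}$ follows by the evident left/right symmetry. The main obstacle is precisely this last geometric exclusion when $C \neq C^{\ast}$, where the predecessor $D'_0$ could a priori sit in a different irreducible component than $C$; all other steps are formal applications of the results of the preceding subsection.
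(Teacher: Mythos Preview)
Your Steps 1 and 2 are essentially the paper's argument: set $D=(P_{\alpha}\times_kP^-)\cdot C$, pick an irreducible component $D'$ of $\mathcal{C}\big((P_{\alpha}\times_kP^-)\cdot Y\cap Z\big)$ containing $D$, observe it is $(P_{\alpha}\times_kP^-)$-stable, take its unique predecessor $D'_0\in\mathcal{P}_k(\overline{G})$ from \Cref{existkdensesub}(1), and use \Cref{existkdensesub}(2) (with $Y'=Y$, or equivalently $Y'=Y\cap Z$ since $Z$ is $G\times_kG$-stable) to get $D'_0\subset Y\cap Z$, hence $D'_0\subset\mathcal{C}(Y\cap Z)$.

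The gap is in Step 3, and it is precisely the point you flag as ``the main obstacle''. Your ``tracking the generic point of $C$ through the birational description'' is not an argument: the action map $(P_{\alpha}\times_kP^-)\times_k^{P\times_kP^-}C^{\ast}\to (P_{\alpha}\times_kP^-)\cdot C^{\ast}$ is only birational, not an isomorphism, so pulling back the inclusion $D\subset (P_{\alpha}\times_kP^-)\cdot C^{\ast}$ gives you no control over where the generic point of $C$ lands. The case split on whether $C^{\ast}$ is $(P_{\alpha}\times_kP^-)$-stable and the appeal to $k$-cancellativity are correct but do not by themselves exclude $D\subsetneq(P_{\alpha}\times_kP^-)\cdot C^{\ast}$.

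The fix is a second application of \Cref{existkdensesub}(2), exactly as in the paper. Apply it to the $(P_{\alpha}\times_kP^-)$-stable variety $D=(P_{\alpha}\times_kP^-)\cdot C$, whose predecessor is $C$, with $Y'=D'_0$: since $(P_{\alpha}\times_kP^-)\cdot D'_0=D'\supset D$, part (2) gives $C\subset D'_0$. As $C$ is an irreducible component of $\mathcal{C}(Y\cap Z)$ and $D'_0\subset\mathcal{C}(Y\cap Z)$ is irreducible, this forces $C=D'_0$, whence $D'=(P_{\alpha}\times_kP^-)\cdot D'_0=(P_{\alpha}\times_kP^-)\cdot C=D$. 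No case analysis on $C^{\ast}$ is needed; the entire Step~3 collapses to this one line.
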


\begin{proof}
    Because $C$ is an irreducible component of $\mathcal{C}(Y\bigcap Z)$, $C\in \mathcal{P}_k(\overline{G})$ and hence $(P_{\alpha}\times_k P^-)\cdot C\in \mathcal{P}_k(\overline{G})$ as well. Because we assume $(P_{\alpha}\times_k P^-)\cdot C\neq C$, we have  $(P_{\alpha}\times_k P^-)\cdot C\not\subset Y\bigcap Z$ as $C$ is an irreducible component of $\mathcal{C}(Y\bigcap Z)$. In particular, we have $(P_{\alpha}\times_k P^-)\cdot Y\neq Y$ and it has a smaller codimension in $\overline{X_J}$.

    Let  $C^{\prime}$ be an irreducible component of   $\mathcal{C}((P_{\alpha}\times_k P^-)\cdot Y)\bigcap Z)$ such that $C^{\prime}\supset(P_{\alpha}\times_k P^-)\cdot C$. Since both $Z$ and $(P_{\alpha}\times_k P^-)\cdot Y$ are $P_{\alpha}\times_k P^-$-stable, $C^{\prime}$ must also be $P_{\alpha}\times_k P^-$-stable. By Proposition \ref{existkdensesub}, there exists a unique $C_0\in\mathcal{P}_k(\overline{G})$ such that $(P_{\alpha}\times_k P^-)\cdot C_0\neq C_0$ and $(P_{\alpha}\times_k P^-)\cdot C_0=C^{\prime}$. Since $(P_{\alpha}\times_k P^-)\cdot C\subset C^{\prime}=(P_{\alpha}\times_k P^-)\cdot C_0$, we also have $C\subset C_0$. Because 
    \[(P_{\alpha}\times_k P^-)\times_k^{P\times_kP^-}\Big(Y\bigcap Z\Big)\to (P_{\alpha}\times_k P^-)\cdot \Big(Y\bigcap Z\Big)= ( (P_{\alpha}\times_k P^-)\cdot Y)\bigcap Z\]
    is surjective, we have $C_0\subset Y\bigcap Z$ by the second part of Proposition \ref{existkdensesub}. Therefore, we must have $C_0=C$ and $(P_{\alpha}\times_k P^-)\cdot C=C^{\prime}$ is an irreducible component of $\mathcal{C}((P_{\alpha}\times_kP^{-})\cdot  Y)\bigcap Z)$.
\end{proof}

During the proof of (2) Theorem \ref{irrecompofintersection}, we will also need the following result concerning the properness of intersection from \cite[1.4 Theorem(ii)]{behaviouratinfinityofbruhat}. In \emph{loc.cit.}, Brion assumes that the base field is $\mathbb{C}$. For self-completeness, we reproduce his proof here in the wonderful compactification case.

\begin{lemma}\label{properintersection}
    For $Y,Z$ as in Theorem \ref{irrecompofintersection}, $Y$ intersects $Z$ properly in $\overline{X_J}$. 
\end{lemma}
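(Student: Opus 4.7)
My plan is to reduce to the algebraically closed case and then exploit the normal crossings structure of the boundary divisor. The proper intersection condition descends from $\bar{k}$ to $k$ because the dimension of each irreducible component of $Y\cap Z$ is preserved under base change: a component over $k$ becomes a union of components of the same dimension over $\bar{k}$. Hence it suffices to work over $\bar{k}$ with $G$ split.

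Over $\bar{k}$, I would use the classical fact that $\overline{G}\setminus G$ is a strict normal crossings divisor whose irreducible components are the smooth divisors $D_\alpha\coloneq \overline{X_{\{\alpha\}}}$, and that inside $\overline{X_J}$ the subvariety $\overline{X_I}$ is the scheme-theoretic transverse intersection of the smooth Cartier divisors $D_\alpha\cap\overline{X_J}$ for $\alpha$ in the appropriate set-difference between $I$ and $J$. Enumerating these simple roots as $\alpha_1,\ldots,\alpha_k$, where $k$ is the codimension of $\overline{X_I}$ in $\overline{X_J}$, and setting $Y_i\coloneq Y\cap\overline{X_{J\cup\{\alpha_1\}}}\cap\cdots\cap\overline{X_{J\cup\{\alpha_i\}}}$, I would prove by induction on $i$ that $Y_i$ is equidimensional of codimension $i$ in $Y$, with every irreducible component a $P\times_k P^-$-orbit closure whose dense orbit lies precisely in the $G\times_k G$-stratum $X_{J\cup\{\alpha_1,\ldots,\alpha_i\}}$. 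Equidimensionality at $i=k$ yields the proper intersection sought; the stratum refinement feeds the next inductive step, since it guarantees that no irreducible component of $Y_i$ is contained in the next Cartier divisor, and so intersecting with it drops the dimension by exactly one via the Cartier property.

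The main obstacle lies in justifying the stratum refinement at each inductive step. By \Cref{dimcomputation}, if an irreducible component of $Y_i$ has dense $P\times_k P^-$-orbit in $X_{J'}$ with orbit data $(\sigma',\tau',\rho')$, then the codimension-$i$ condition translates into the numerical identity
\[
(|J'|-|J|)+d(\sigma')+d(\tau')+d(\rho')=i+d(\sigma)+d(\tau)+d(\rho).
\]
A priori one has only $|J'|\geq |J|+i$, so ``deeper'' components with $|J'|>|J|+i$ compensated by strictly smaller values of $d(\sigma')+d(\tau')+d(\rho')$ are combinatorially possible. The heart of Brion's argument in \cite[1.4 Theorem~(ii)]{behaviouratinfinityofbruhat}, which I would reproduce here, is to show that any such deeper component is in fact contained in the closure of an equidimensional component with dense orbit precisely in $X_{J\cup\{\alpha_1,\ldots,\alpha_i\}}$, hence is not itself an irreducible component of $Y_i$. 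Carrying this out requires the closure-relation analysis for $P\times_k P^-$-orbits across different $G\times_k G$-strata; the argument is characteristic-free and carries through verbatim over any algebraically closed base field, which combined with the base-change reduction completes the proof.
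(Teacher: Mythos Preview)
Your proposal has a circularity problem. The ``closure-relation analysis for $P\times_k P^-$-orbits across different $G\times_k G$-strata'' that you invoke in the last step is precisely \Cref{irrecompofintersection}, and the paper's proof of that theorem uses the present lemma in an essential way: it is exactly what certifies that the candidate subvarieties $\overline{(P\times_kP^-)\cdot(\sigma\rho v,\tau v)\cdot b_I}$ have the correct codimension and hence are genuine irreducible components of $Y\cap Z$. So you cannot appeal to the cross-strata closure relations to rule out ``deeper'' components without begging the question.

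Your description of Brion's argument is also not accurate; he does not argue by showing deeper components lie in closures of shallower ones. The actual proof (reproduced in the paper) runs as follows. Write $Z=Z_1\cap\cdots\cap Z_c$ with each $Z_j$ a $G\times G$-stable divisor in $\overline{X_J}$, and take the largest $i$ for which $Y\cap Z_1\cap\cdots\cap Z_i$ is proper. If $i<c$, some component $C$ of this intersection already lies in $Z_{i+1}$. Choose $(w_1,w_2)\in W\times W$ so that $\overline{(B\times B^-)\cdot(w_1,w_2)\cdot C}=(G\times G)\cdot C$ with birational action map. A dimension count then forces $\overline{(B\times B^-)\cdot(w_1,w_2)\cdot Y}$ to have dimension strictly less than $\dim\overline{X_J}$, hence to lie in some $B\times B^-$-stable divisor $Y'$ that is not $G\times G$-stable (a \emph{color}). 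But every color is contained in the complement of the open cell $(\mathscr{R}_u(B),\mathscr{R}_u(B^-))\cdot(\widetilde{T_{\overline{k}}}\cap\overline{X_J})$, while every $G\times G$-orbit meets that cell at its base point; so $Y'$ cannot contain $(G\times G)\cdot C$, a contradiction. This argument is self-contained and needs no prior knowledge of orbit closures across strata, which is why it can be placed before \Cref{irrecompofintersection}.
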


\begin{proof}
    Since $Y_{\bar{k}}$ is a $(B\times_{\bar{k}} B^{-})$-orbit closure, it is enough to prove the lemma when $k=\bar{k}$. Suppose that $Z=Z_1\bigcap\ldots\bigcap Z_c$ where $Z_i$ are codimension one $(G\times_kG)_{\bar{k}}$-orbits in $(G\times_kG)_{\bar{k}}\cdot Y$ and $c=\mathrm{codim}_{\overline{X_J}}(Z)$. Let $i$ be the largest index such that the intersection $Y\bigcap Z_1 \bigcap\ldots\bigcap Z_i$ is proper. If $i\neq c$, then there exists an irreducible component $C\subset Y\bigcap Z_1\ldots \bigcap Z_i$ such that $C\subset Z_{i+1}$. Let $(w_1,w_2)\in W\times W$ such that $\overline{(B\times_{\bar{k}}B^-)\cdot (w_1,w_2)\cdot C}=(G\times_kG)_{\bar{k}}\cdot C$ and the action map
    \[\overline{(B\times_{\bar{k}}B^-)\cdot (w_1,w_2)\cdot (B\times_{\bar{k}}B^{-})}\times_{\bar{k}}^{B\times B^-}C\to (G\times_kG)_{\bar{k}}\cdot C\]
    is birational hence $\dim(C)+l(w_1)+l(w_2)=\dim((G\times_kG)_{\bar{k}}\cdot C)$. Because we assume that the intersection $Y\bigcap Z_1\bigcap \ldots \bigcap Z_i$ is proper, we have $\dim(C)=\dim(Y)-i$. Since $C\subset Z_{i+1}$, we have $(G\times_kG)_{\bar{k}}\cdot C\subset Z_1\bigcap \ldots \bigcap Z_{i+1}$ thus $\dim((G\times_kG)_{\bar{k}}\cdot C)\leq \dim(X_J)-i-1$. Hence, we have 
    \[\dim(\overline{(B\times_{\bar{k}}B^-)\cdot (w_1,w_2)\cdot Y})\leq l(w_1)+l(w_2)+\dim(Y)=l(w_1)+l(w_2)+\dim(C)-i\leq \dim(X_J)-1.\]
    Therefore, $\overline{B\times_{\bar{k}}B^-\cdot (w_1,w_2)\cdot Y}$ is not $(G\times G)_{\bar{k}}$-stable. Then there exists an irreducible $B\times_{\bar{k}}B^-$-divisor $Y^{\prime}\subset \overline{X_J}$ such that $Y^{\prime}\supset \overline{(B\times_{\bar{k}}B^-)\cdot (w_1,w_2)\cdot Y}$ and that $Y^{\prime}$ is not $(G\times_kG)_{\bar{k}}$-stable (i.e., $Y^{\prime}$ is a color). But any such divisor lies in the complement of $(\mathscr{R}_u(B) \times_{\bar{k}} \mathscr{R}_u(B^-)) \cdot \Big(\widetilde{T_{\overline{k}}} \bigcap \overline{X_J}\Big) \subset \overline{G}_{\bar{k}}$. On the other hand, any $(G\times_k G)_{\bar{k}}$-orbit contains a base point $b\in \widetilde{T_{\overline{k}}}$ by \Cref{propositionofbasepoint}. Therefore, such $Y^{\prime}$ cannot contain $(G\times_kG)_{\bar{k}}$-orbit, hence a contradiction. Therefore $i=c$.
\end{proof}

\begin{proof}[Proof of \Cref{irrecompofintersection}]
    Retain the notation of \Cref{irrecompofintersection}. If $Y=\overline{X_J}$, then we have \[\overline{X_J}=\overline{(P\times_k P^-)\cdot (1,1)\cdot b_J}\text{ and }Y\bigcap Z=Z=\overline{(P\times_k P^-)\cdot (1,1)\cdot b_I}.\] Hence the theorem holds in this case. 

    Now assume $\mathrm{codim}_{\overline{X_J}}(Y)>0$. By induction, we may assume that the statement of \Cref{irrecompofintersection} holds for $Y^{\prime}, Z^{\prime}$ when either of the following condition holds:
    \begin{itemize}
        \item $(G\times_kG)\cdot Y^{\prime}\subsetneq (G\times_kG)\cdot Y ;$
        \item $(G\times_kG)\cdot Y^{\prime}=(G\times_kG)\cdot Y=\overline{X_J}$ and $\mathrm{codim}_{\overline{X_J}}(Y^{\prime})<\mathrm{codim}_{\overline{X_J}}(Y);$
        \item $Y=Y^{\prime}$ and $Z^{\prime}\subsetneq Z$.
    \end{itemize} 
    
    Let  $C=\overline{(P\times_kP^-)\cdot(\sigma\rho v,\tau v)\cdot b_I}$ such that $v\in W_J\bigcap W^I$ satisfying $l(\sigma\rho)=l(\sigma\rho v)+l(v)$. First we show that $C\subset \mathcal{C}(Y\bigcap Z)$ and is an irreducible component of $Y\bigcap Z$. By \Cref{dimcomputation}, we have 
    \[\mathrm{codim}_{\overline{X_I}}(C)=d(\sigma\rho v)+d(\tau v)=d(\sigma)+d(\rho)+d(\tau)=\mathrm{codim}_{\overline{X_{J}}}(Y)>0.\] 
    Therefore $C$ is not $G\times_kG$-stable and there exists $\alpha\in \Delta$ such that $(P_{\alpha}\times_kP^-)\cdot C\neq C$ or $(P\times_kP_{\alpha}^-)\cdot C\neq C$. We deal with the first possibility, the proof in the opposite case is analogous. By \Cref{actrk1para}, we have    
    \begin{enumerate}
        \item either $l(s_{\alpha}\sigma)<l(\sigma)$
        \item or $s_{\alpha}\sigma=\sigma s_{\beta}$ for some $\beta\in I $ such that $ l(s_{\beta}\rho v)<l(\rho v)$.
    \end{enumerate}
    If (1) occurs, we have $(P_{\alpha}\times_kP^-)\cdot Y\neq Y$. If $(2)$ happens, we have
    \[l(s_{\beta}\rho)=l(s_{\beta}\rho v)+l(v)<l(\rho v)+l(v)=l(\rho)\]
    hence $(P_{\alpha}\times_kP^-)\cdot Y\neq Y$ as well. Then by inductive hypothesis, $(P_{\alpha}\times_kP^-)\cdot C$ is an irreducible component of $\mathcal{C}(((P_{\alpha}\times_kP^-)\cdot Y)\bigcap Z)$. Since 
    \[(P_{\alpha}\times_kP^-)\cdot (Y\bigcap Z)=((P_{\alpha}\times_kP^-)\cdot  Y)\bigcap Z \]
    by Proposition \ref{existkdensesub} we conclude $C\subset \overline{Y\bigcap Z}$. Moreover, $C\subset \mathcal{C}(Y\cap Z)$ because $C\in \mathcal{P}_k(\overline{G})$. By \Cref{properintersection}, $Y$ intersects $Z$ properly inside $(G\times_kG)\cdot Y=\overline{X_J}$. Since $\mathrm{codim}_Z(C)=\mathrm{codim}_{\overline{X_J}}(Y)$, we see that $C$ must be an irreducible component of $Y\bigcap Z$.
    
    Now let $C^{\prime}\subset \mathcal{C}(Y\bigcap Z)$ be an irreducible component. Next we show that $C^{\prime}$ is not $G\times_kG$-stable and $(G\times_kG)\cdot C^{\prime}=Z$. If $(G\times_kG)\cdot C^{\prime} \subsetneq Z$, by inductive hypothesis, we have 
    \[C^{\prime}=\overline{(P\times_kP^-)\cdot (\sigma \rho v^{\prime},\tau v^{\prime})\cdot b_{I^{\prime}}}\] for some $I^{\prime}\subsetneq I$ and $v^{\prime}\in W_J\bigcap W^{I^{\prime}}$ such that $l(\rho)=l(\rho v^{\prime})+l(v^{\prime})$. Let $v_1\in W^I$ and $v_2\in W_I$ be the unique elements satisfying $v^{\prime}=v_1v_2$. Then $l(\rho)=l(\rho v_1)+l(v_1)$ and $l(\rho v_1)=l(\rho v)+l(v_2)$.
    By the previous paragraph, $Y^{\prime}=\overline{(P\times_kP^-)\cdot (\sigma \rho v_1,\tau v_1)\cdot b_{I}}\subset Y\bigcap Z$ and $C^{\prime}\subset Y^{\prime}\bigcap ((G\times_kG)\cdot C^{\prime})$ by inductive hypothesis. But this is impossible because we choose $C^{\prime}$ to be an irreducible component of $\mathcal{C}(Y\bigcap Z)$. Hence we conclude $(G\times_kG)\cdot C^{\prime}=Z$. Because $Y$ intersects $Z$ properly inside $\overline{X_J}$ by \Cref{properintersection}, we have $\mathrm{codim}_Z(C^{\prime})\geq\mathrm{codim}_{\overline{X_J}}(Y)>0$ hence $C^{\prime}$ cannot be $G\times_k G$-stable. 
    
    Because $C^{\prime}$ is not $G\times_kG$-stable, there exists a simple root $\alpha\in \Delta$ such that either $(P_{\alpha}\times_kP^-)\cdot C^{\prime} \neq C^{\prime}$ or $(P\times_kP_{\alpha}^-)\cdot C^{\prime}\neq C^{\prime}$. We treat the first possibility while the second follows from a similar argument.
    By Lemma \ref{remainirrecomp}, $(P_{\alpha}\times_kP^-)\cdot C^{\prime}$ is an irreducible component of $((P_{\alpha}\times_kP^-) \cdot Y)\bigcap Z$. Since $(P_{\alpha}\times_kP^-) \cdot Y\neq Y$ we have that

     \begin{enumerate}
        \item either $l(s_{\alpha}\sigma)<l(\sigma)$
        \item or $s_{\alpha}\sigma=\sigma s_{\beta}$ for some $\beta\in I $ and $ l(s_{\beta}\rho )<l(\rho )$.
    \end{enumerate}
    By inductive hypothesis, there exists $v$ such that 
    \[(P_{\alpha}\times_kP^-)\cdot C^{\prime}=\begin{cases}
        \overline{(P\times_kP^-)\cdot(s_{\alpha}\sigma\rho v,\tau v)\cdot b_I} & \text{if (1) holds}\\
        \overline{(P\times_kP^-)\cdot (\sigma s_{\beta}\rho v,\tau v)\cdot b_I} & \text{if (2) holds}
    \end{cases} 
    \] 
    and $l(\rho)=l(\rho v)+l(v)$ (resp., $l(s_{\alpha}\rho )=l(s_{\alpha}\rho v)+l(v)$ ) if (1) (resp., (2)) holds.  In either case, we can deduce that $l(v)=l(\rho v)+l(v)$ and $C^{\prime}=\overline{(P\times_kP^-)\cdot (\sigma\rho v,\tau v)\cdot b_I}$ hence the theorem follows.
\end{proof}

\subsection{Conclusion}
Now we can combine \Cref{closureinsamegorbit} and \Cref{irrecompofintersection} to deduce the main result of \Cref{sectionbruhatorder}.

\begin{theorem}\label{relbruhatmainthm}
    Given $Y_1,Y_2\in \mathcal{P}_k(\overline{G})$ such that 
    $$Y_1=\overline{(P\times_kP^-)\cdot (\sigma_1\rho_1,\tau_1)\cdot b_{I_1}} \;\text{and}\;Y_2=\overline{(P\times_kP^-)\cdot (\sigma_2\rho_2,\tau_2)\cdot b_{I_2}},$$
where $\sigma_i,\tau_i\in W^{I_i}$ and $\rho_i\in W_{I_i}$, for $i=1,2$. Then $Y_1\subset Y_2$ if and only if $I_1\subset I_2$ and there exist two elements $v\in W_{I_2}\bigcap W^{I_1}$, $u\in W_{I_1}$ such that $\sigma_1\rho_1u\geq \sigma_2\rho_2v \text{, } \tau_1\geq \tau_2vu^{-1}$
and $l(\rho)=l(\rho v)+l(v)$.
\end{theorem}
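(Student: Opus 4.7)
The plan is to reduce to the two cases already treated, by inserting an intermediate $(P\times_k P^-)$-orbit closure that simultaneously lies in $\overline{X_{I_1}}$ and in $Y_2$. Concretely, I will combine \Cref{closureinsamegorbit} (comparison of orbit closures inside a single $G\times_k G$-orbit closure) with \Cref{irrecompofintersection} (irreducible components of the intersection of an orbit closure with a smaller $G\times_k G$-orbit closure), linked via an orbit closure of the form $\overline{(P\times_k P^-)\cdot(\sigma_2\rho_2 v,\tau_2 v)\cdot b_{I_1}}$.

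For the only-if direction, assume $Y_1\subset Y_2$. Since $Y_1\subset \overline{X_{I_1}}$, we get $Y_1\subset Y_2\cap \overline{X_{I_1}}$; in particular $Y_2\cap X_{I_1}\neq \emptyset$, which by \Cref{irrecompofintersection}(1) forces $I_1\subset I_2$. Because $Y_1\in \mathcal{P}_k(\overline{G})$, in fact $Y_1\subset \mathcal{C}(Y_2\cap \overline{X_{I_1}})$, and since $Y_1$ is irreducible (as the closure of a single $P(k)\times P^-(k)$-orbit, by \Cref{kptsoforbit}), it must be contained in one of the irreducible components listed by \Cref{irrecompofintersection}(2); write $v\in W_{I_2}\cap W^{I_1}$ for the corresponding parameter. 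The accompanying length condition $l(\sigma_2\rho_2)=l(\sigma_2\rho_2 v)+l(v)$ reduces to $l(\rho_2)=l(\rho_2 v)+l(v)$ using $\sigma_2\in W^{I_2}$ and $v\in W_{I_2}$. The containment $Y_1\subset \overline{(P\times_k P^-)\cdot (\sigma_2\rho_2 v,\tau_2 v)\cdot b_{I_1}}$ now takes place inside $\overline{X_{I_1}}$, so \Cref{closureinsamegorbit} produces $u\in W_{I_1}$ with $\sigma_1\rho_1 u\geq \sigma_2\rho_2 v$ and $\tau_1\geq \tau_2 v u^{-1}$.

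The if direction runs these steps in reverse: given $v\in W_{I_2}\cap W^{I_1}$ and $u\in W_{I_1}$ satisfying the listed inequalities and length identity, \Cref{irrecompofintersection}(2) (applied to $Y_2$ with $Z=\overline{X_{I_1}}$) yields $\overline{(P\times_k P^-)\cdot(\sigma_2\rho_2 v,\tau_2 v)\cdot b_{I_1}}\subset Y_2$, and \Cref{closureinsamegorbit} (applied inside $\overline{X_{I_1}}$) then gives $Y_1\subset \overline{(P\times_k P^-)\cdot(\sigma_2\rho_2 v,\tau_2 v)\cdot b_{I_1}}$; composing produces $Y_1\subset Y_2$. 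The only real work is careful bookkeeping to verify that the pairs $(\sigma_2\rho_2 v,\tau_2 v)$ and $(\sigma_1\rho_1,\tau_1)$ are indeed in the $(W^{I_1},W_{I_1})$-form required by \Cref{closureinsamegorbit} (for the former, this uses $\sigma_2\in W^{I_2}\subset W^{I_1}$, $v\in W^{I_1}$ and $\rho_2 v\in W_{I_2}\subset W_{I_1}$) and that the two length conditions coming from the applications of the earlier theorems match the single length condition in the statement. The main conceptual obstacle has already been absorbed into the proofs of \Cref{closureinsamegorbit} and \Cref{irrecompofintersection}; the present theorem is essentially their concatenation.
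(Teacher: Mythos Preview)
Your proposal is correct and matches the paper's approach exactly: the paper simply asserts that \Cref{relbruhatmainthm} follows by combining \Cref{closureinsamegorbit} and \Cref{irrecompofintersection}, and you have spelled out precisely how, via the intermediate orbit closure $\overline{(P\times_k P^-)\cdot(\sigma_2\rho_2 v,\tau_2 v)\cdot b_{I_1}}$. One small slip in your bookkeeping: you write $\rho_2 v\in W_{I_2}\subset W_{I_1}$, but since $I_1\subset I_2$ the inclusion goes the other way ($W_{I_1}\subset W_{I_2}$), so $(\sigma_2,\rho_2 v)$ is \emph{not} in general the $W^{I_1}\times W_{I_1}$-decomposition of $\sigma_2\rho_2 v$; this is harmless because the criterion in \Cref{closureinsamegorbit} depends only on the product $\sigma_2'\rho_2'=\sigma_2\rho_2 v$ and on $\tau_2'=\tau_2 v$, and the needed fact $\tau_2 v\in W^{I_1}$ holds since $v\in W_{I_2}\cap W^{I_1}$ sends $I_1$ into $\Phi_{I_2}^+$ and $\tau_2\in W^{I_2}$ sends $\Phi_{I_2}^+$ into $\Phi^+$.
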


\begin{remark}
    Theorem \ref{relbruhatmainthm} is a generalization of \cite[2.4 Proposition]{intersectioncohomologyofBorbitclosures} to arbitrary base field. In \emph{loc.cit.}, Springer works with $B\times_{\bar{k}}B$-orbit closures. Using the relation $B^-=\tilde{w}_{0}B\tilde{w}_0$ where $\tilde{w}_0\in\widetilde{W}$ is the longest element, it is not hard to see our theorem is equivalent to Springer's result when $k$ is algebraically closed.
\end{remark}

\section{Topological closure}
We keep the notations of \Cref{sectionbruhatorder}.
Suppose that $k$ is endowed with a non-discrete topology $\mathcal{E} $, satisfying
the separation axiom $T_1$ and with respect to which it is a topological ring.
Then for any $k$-variety $Z$, the set $Z(k)$ is equipped with a natural topology, see, for instance, \cite{Conradtopology} for a discussion of basic properties of this topology. For example, this assignment is functorial in the sense that any morphism between $k$-varieties $f:Z_1\to Z_2$ induces a continuous map $Z_1(k)\to Z_2(k)$. Moreover, because $\mathcal{E}$ is $T_1$, points in $k$ are closed subsets. Therefore, for any closed subvariety $Z^{\prime}\subset Z$, $Z^{\prime}(k)$ is closed in $Z(k)$. Hence the topology on $Z(k)$ is finer than the one induced from Zariski topology.

By \cite[Corollary 21.28]{Borellinearalgebraicgroup}, if we equip $G(k)$ with the topology induced by $\mathcal{E}$, then the closure of $(PwP^{-})(k)$ in $G(k)$ coincides with $\overline{PwP^-}(k)$.

We equip $\overline{G}(k)$ with the topology $\mathcal{T}$ induced by $\mathcal{E}$. Our goal in this section is to generalize the above statement to wonderful compactification.

\begin{theorem}\label{theoremtopologicalclosure}
  Let $O=(P\times_k P^-)\cdot (\sigma\rho,\tau)\cdot b_{J}$, where $\sigma,\tau\in W^J $ and  $\rho\in W_J$. Then the closure of $O(k)$ in $\overline{G}(k)$ with respect to the topology $\mathcal{T}$ equals $\overline{O}(k)$, where the bar indicates taking Zariski closure.
\end{theorem}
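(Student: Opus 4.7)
The inclusion $\overline{O(k)}^{\mathcal{T}}\subseteq \overline{O}(k)$ is immediate, since (as recalled in the paragraph preceding the theorem) the $T_1$ hypothesis on $\mathcal{E}$ ensures that the $\mathcal{T}$-topology on $\overline{G}(k)$ refines the Zariski topology, so $\overline{O}(k)$ is automatically $\mathcal{T}$-closed.

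For the converse inclusion, fix $y\in \overline{O}(k)$. The plan is first to perform an equivariant reduction to a base point. The $\mathcal{T}$-closure of $O(k)$ is $P(k)\times P^-(k)$-stable, because that group acts $\mathcal{T}$-continuously on $\overline{G}(k)$ and preserves $O(k)$. Combined with \Cref{kptsoforbit}, this lets me assume $y$ is the distinguished base point $(\sigma'\rho',\tau')\cdot b_{I'}$ of its $P\times P^-$-orbit $O'\subset \overline{O}$, with $(\sigma',\rho',\tau',I')$ determined by \Cref{borbitproposition} and subject to the compatibility given by \Cref{relbruhatmainthm}.

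The heart of the proof is then to construct a $k$-morphism $\phi\colon C\to \overline{G}$ from a smooth $k$-curve $C$ with a $k$-rational point $c_0$ such that $\phi(c_0)=y$ and $\phi$ sends a dense open subset of $C$ into $O$. Given such $\phi$, the functoriality of the $\mathcal{T}$-topology yields a continuous map $C(k)\to\overline{G}(k)$. Smoothness of $C$ at the $k$-point $c_0$ together with the non-discreteness of $\mathcal{E}$ produces, by an argument directly analogous to the one entering \Cref{RelativeBruhatforgroup}(4) via \cite[Corollary~21.28]{Borellinearalgebraicgroup}, a net $c_n\in C(k)$ with $c_n\to c_0$ in $\mathcal{T}$; for $c_n$ close enough to $c_0$ the image $\phi(c_n)$ lies in $O(k)$, completing the approximation.

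The main obstacle is constructing $C$ together with a $k$-rational point above $y$, rather than only above a point of some proper finite extension $k'/k$. A direct Bertini cut inside $O\subset \overline{G}$, along the lines of the proof of \Cref{lemmaofvaluation}, would need $\overline{O}$ to be $k$-smooth at $y$ to force $k'=k$, and this in general fails on the deeper boundary strata. The remedy is to exploit smoothness of the ambient $\overline G$ at $y$: I would perform the Bertini cut inside $\overline G$, choosing $\dim(\overline G)-1$ hypersurfaces through $y$ generic enough that their intersection is a $k$-smooth curve at $y$ meeting $O$, and only afterwards restrict to $\overline O$. Smoothness of $\overline G$ at $y$ guarantees the normalization of the resulting curve carries a $k$-rational point over $y$, while the density of $O$ in $\overline O$ together with the version of Bertini in \cite[Proposition~4.1.3]{CKproblemtorsors} ensures the curve's generic point lands in $O$. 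This reduces the approximation problem at $y$ to one at a smooth $k$-rational point of a $k$-curve, which is then handled exactly as in the group-theoretic case.
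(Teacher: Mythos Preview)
Your Bertini argument contains a genuine gap. When you cut $\overline{G}$ by $\dim(\overline{G})-1$ generic hypersurfaces through $y$, the resulting curve $C$ is a curve \emph{in $\overline{G}$}, and its generic point therefore lies in the open dense subset $G\subset\overline{G}$, not in $O$. The density of $O$ inside $\overline{O}$ is irrelevant here: that would only force the generic point of $C$ into $O$ if $C$ were contained in $\overline{O}$, which a generic one-dimensional slice of $\overline{G}$ through $y$ will not be whenever $\overline{O}$ has positive codimension in $\overline{G}$ (indeed the expected dimension of $C\cap\overline{O}$ is then $\leq 0$). So the map $\phi\colon C\to\overline{G}$ you obtain approximates $y$ by points of $G(k)$, not of $O(k)$, and the argument does not conclude. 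Cutting instead inside $\overline{O}$ runs into exactly the smoothness obstruction you already identified; your proposed hybrid (``cut in $\overline{G}$, then restrict to $\overline{O}$'') does not resolve the tension, because the restriction $C\cap\overline{O}$ is zero-dimensional.

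The paper sidesteps this difficulty by never invoking Bertini at all. Instead it builds the approximating curves \emph{explicitly}: within a fixed $G\times_kG$-orbit it uses the root subgroups $U_{(-\alpha)}$ (which are affine spaces over $k$, hence full of $k$-points accumulating at the identity) to move between adjacent $P\times_kP^-$-orbits (\Cref{6.2}, \Cref{6.3}); and to pass from one $G\times_kG$-orbit to a smaller one it uses the cocharacter $\lambda\in X_\ast(S)$ furnishing the base point, via the $k$-morphism $\mathbb{A}^1_k\to\overline{X_J}$ extending $t\mapsto(\lambda(t),1)\cdot b_J$ (\Cref{6.4}). These curves are smooth affine lines defined over $k$ from the outset, and they lie in $\overline{O}$ by construction, so the $k$-rationality and generic-point-in-$O$ requirements are satisfied simultaneously. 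The combinatorics of \Cref{relbruhatmainthm} then guarantees that every $P\times_kP^-$-orbit in $\overline{O}$ is reached by a finite chain of such moves.
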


First, we show that the closure relation for $(P(k)\times P^-(k))$-orbits inside each $(G(k)\times G(k))$-orbit coincides with the relation given by Zariski closure, which is analogous to \Cref{closureinsamegorbit} in the topological setting.

\begin{lemma}\label{6.2}
    Let $O=(P\times_k P^-)\cdot(\sigma\rho,\tau)\cdot b_{J}$ be a $(P\times_kP^{-})$-orbit defined over $k$, where $\sigma,\tau\in W^J $ and  $\rho\in W_J$. Let  $Q=P_{\alpha}\times_kP^-$ or $P\times_kP_{\alpha}^-$ for some $\alpha\in \Delta$. Suppose that $Q\cdot \overline{O}\neq \overline{O}$. Let $O^{\prime}$ be the unique open $P\times_kP^{-}$-orbit defined over k inside $Q\cdot O$. Then the closure of $O^{\prime}(k)$ inside $\overline{G}(k)$ with respect to $\mathcal{T}$ contains $O(k)$ 
\end{lemma}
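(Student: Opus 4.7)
The plan is to exhibit, for each $x\in O(k)$, a $k$-morphism $f\colon \mathbb{A}_{1,k}\to \overline{G}$ with $f(0)=x$ and $f(\epsilon)\in O'$ for every $\epsilon\neq 0$. Once this is done, $f$ induces a continuous map $k\to \overline{G}(k)$ with respect to $\mathcal{T}$; since the topology on $k$ is $T_1$ and non-discrete, $0\in k$ is not isolated, so for every $\mathcal{T}$-open neighbourhood $V$ of $x$ in $\overline{G}(k)$, the open set $f^{-1}(V)\subset k$ meets $k\setminus\{0\}$, yielding a point of $O'(k)\cap V$. Hence $x$ belongs to the $\mathcal{T}$-closure of $O'(k)$.

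To construct such an $f$, I would first treat the case $Q=P_\alpha\times_k P^-$. Fix a representative $\dot{s}_\alpha\in N_G(S)(k)$ of $s_\alpha$, and consider the root subgroup morphism $u_{-\alpha}\colon \mathbb{G}_{a,k}\hookrightarrow U_{(-\alpha)}\subset L_\alpha$. By the rank-one Bruhat decomposition applied to $L_\alpha$, one has $u_{-\alpha}(\epsilon)\in P\cdot\dot{s}_\alpha\cdot P$ for every $\epsilon\neq 0$, while $u_{-\alpha}(0)=1\in P$. Setting $f(\epsilon)\coloneq (u_{-\alpha}(\epsilon),1)\cdot x$ then gives $f(0)=x$, and it remains only to check that $f(\epsilon)\in O'$ for $\epsilon\neq 0$. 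Writing $u_{-\alpha}(\epsilon)=u\dot{s}_\alpha p$ with $u\in U_{(\alpha)}$ and $p\in P$, we have $f(\epsilon)=(u\dot{s}_\alpha,1)\cdot y$ where $y\coloneq (p,1)\cdot x\in O$, so the claim reduces to showing that $(u\dot{s}_\alpha,1)\cdot y\in O'$ for all $u\in U_{(\alpha)}(k)$ and all $y\in O(k)$.

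For this, observe that for $u,u'\in U_{(\alpha)}$ the identity $(u'\dot{s}_\alpha,1)\cdot y=(u'u^{-1},1)\cdot(u\dot{s}_\alpha,1)\cdot y$ together with $u'u^{-1}\in U_{(\alpha)}\subset P$ shows that the morphism $u\mapsto (u\dot{s}_\alpha,1)\cdot y$ from $U_{(\alpha)}$ to $Q\cdot O=O\sqcup O'$ factors through a single $(P\times_k P^-)$-orbit. Meanwhile, since $Q\cdot\overline{O}\neq\overline{O}$, \Cref{actrk1para} asserts that the action map $(U_{(\alpha)}\dot{s}_\alpha)\times_k\overline{O}\to Q\cdot\overline{O}$ (the restriction of the birational action map to the open cell of $P_\alpha/P$) is birational onto an open dense subset of $\overline{O'}$; in particular the generic image already lies in $O'$. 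Combining these two observations forces the unique orbit in question to be $O'$, hence $(u\dot{s}_\alpha,1)\cdot y\in O'$ for every $u$ and $y$, as desired.

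The case $Q=P\times_k P_\alpha^-$ is handled symmetrically: replace $u_{-\alpha}$ by $u_\alpha\colon \mathbb{G}_{a,k}\hookrightarrow U_{(\alpha)}\subset L_\alpha\subset P_\alpha^-$, observe by the same rank-one calculation that $u_\alpha(\epsilon)\in P^-\cdot\dot{s}_\alpha\cdot P^-$ for $\epsilon\neq 0$, and use the family $f(\epsilon)=(1,u_\alpha(\epsilon))\cdot x$. The main delicate point throughout is justifying, via \Cref{actrk1para}, that the image of the open cell lands inside $O'$ rather than in the closed stratum $O\subset Q\cdot O$; everything else is a routine continuity-and-density argument depending only on the non-discreteness of $\mathcal{E}$.
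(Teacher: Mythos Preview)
Your argument is correct and follows essentially the same idea as the paper's: act on a point of $O$ by elements of $U_{(-\alpha)}$, use that $U_{(-\alpha)}\setminus\{1\}\subset Ps_\alpha P$ so the translate lands in $O'$, and conclude by non-discreteness of $\mathcal{E}$. The paper works at the fixed base point $(\dot\sigma\dot\rho,\dot\tau)b_J$ with the entire group $U_{(-\alpha)}$ (an affine space over $k$) and then propagates by $P(k)\times P^-(k)$-equivariance, whereas you work directly at an arbitrary $x\in O(k)$ with a one-parameter slice and give a slightly more roundabout justification (via the birationality in \Cref{actrk1para}) for why the image lies in $O'$; the paper simply observes that $(Ps_\alpha P\times P^-)\cdot(\dot\sigma\dot\rho,\dot\tau)b_J=(P\times P^-)\cdot(s_\alpha\sigma\rho,\tau)b_J$ is already the open orbit.
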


\begin{proof}
    We work with the $Q=P_{\alpha}\times_kP^-$ case while the other case is similar. Let $\dot{\sigma},\dot{\rho},\dot{\tau}$ be representatives of $\sigma, \rho, \tau$ in $G(k)$ respectively. Consider the subvariety $K=(U_{-(\alpha)}\times_k\{1\})(\dot{\sigma}\dot{\rho},\dot{\tau})b_J$. Because $Q\overline{O}\neq \overline{O}$, the action map induces an isomorphism $U_{\alpha}\simeq K$. Since $U_{(\alpha)}$ is isomorphic to an affine space over $k$, c.f., \cite[21.20~Theorem~(i)]{Borellinearalgebraicgroup}, therefore $K$ is also isomorphic to an affine space over $k$. For $u\in U_{-(\alpha)}(k)$, if $u\neq 1$ then $u\in (Ps_{\alpha}P)(k)$ and hence $(u,1)(\dot{\sigma}\dot{\rho},\dot{\tau})b_J\in O^{\prime}(k)$ when $u\neq1$. Since the topology on $k$ is non-discrete, we see that $(\dot{\sigma}\dot{\rho},\dot{\tau})b_J$ lies in the closure of $O^{\prime}(k)$ hence $O(k)$ lies in the closure of $O^{\prime}(k)$ as well.
\end{proof}

\begin{corollary}\label{6.3}
    Let $O_i\coloneq(P\times_k P^-)\cdot (\sigma_i\rho_i,\tau_i)\cdot b_{J}$, $i=1,2$ be two $P\times_kP^-$-orbits. Suppose that $O_1\subset \overline{O_2}$, then $O_1(k)$ is contained in the closure of $O_2(k)$ with respect to $\mathcal{T}$.
\end{corollary}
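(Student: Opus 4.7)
The plan is to reduce \Cref{6.3} to an iterated application of \Cref{6.2}. Concretely, I will exhibit a chain of $(P \times_k P^-)$-orbits
\[
O_1 = O^{(0)},\, O^{(1)},\, \ldots,\, O^{(m)} = O_2
\]
all defined over $k$ and contained in $X_J$, together with rank-one parabolics $Q_i \in \{P_{\alpha_i} \times_k P^-,\ P \times_k P^-_{\alpha_i}\}$ for simple roots $\alpha_i \in \Delta$, such that for each $i$ the unique open $(P \times_k P^-)$-orbit inside $Q_i \cdot O^{(i)}$ is $O^{(i+1)}$ and moreover $Q_i \cdot \overline{O^{(i)}} \neq \overline{O^{(i)}}$. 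Once such a chain is produced, \Cref{6.2} gives that $O^{(i)}(k)$ lies in the $\mathcal{T}$-closure of $O^{(i+1)}(k)$ for each $i$, and chaining these inclusions via the transitivity of topological closure yields that $O_1(k)$ lies in the $\mathcal{T}$-closure of $O_2(k)$.

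To construct the chain, I would follow the inductive procedure used in the proof of \Cref{closureinsamegorbit}. There, one picks a reduced expression $(\sigma_2\rho_2 w_0, \tau_2 w_{0,J} w_0) = s_1 \cdots s_n$ in $W \times W$, lets $Q_i$ be the rank-one parabolic corresponding to $s_i$, and writes
\[
\overline{O_2} = Q_1 \cdots Q_n \cdot \overline{(P \times_k P^-) \cdot (w_0, w_0 w_{0,J}) \cdot b_J},
\]
with the corresponding action map birational. Moreover, \emph{loc.\ cit.}\ establishes that any $(P \times_k P^-)$-orbit closure in $\mathcal{P}_k(\overline{G})$ contained in $\overline{O_2}$ arises from a sub-word of $s_1 \cdots s_n$. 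Applying this to $\overline{O_1} \subset \overline{O_2}$ produces a sub-sequence $i_1 < \cdots < i_k$ such that $\overline{O_1} = Q_{i_1} \cdots Q_{i_k} \cdot \overline{(P \times_k P^-) \cdot (w_0, w_0 w_{0,J}) \cdot b_J}$. The complementary indices, applied in their original order starting from $O_1$, yield the desired chain ending at $O_2$.

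The main point to verify will be that each intermediate step genuinely satisfies the hypothesis of \Cref{6.2}, namely that the action of $Q_i$ strictly enlarges $\overline{O^{(i)}}$. This follows from the irredundancy of the reduced expression: inserting a reflection absent from the current sub-word produces a strictly longer reduced sub-word, and by \Cref{actrk1para} this strictly increases the dimension of the resulting orbit closure. Finally, \Cref{kptsoforbit} guarantees that each intermediate orbit is $k$-rational and that $(O^{(i)})(k)$ is a single $(P(k) \times P^-(k))$-orbit, so \Cref{6.2} is applicable at every stage and the iterated inclusion delivers the corollary.
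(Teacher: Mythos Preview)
Your strategy mirrors the paper's own (terse) proof: extract from \Cref{closureinsamegorbit} a presentation $\overline{O_2}=Q_1\cdots Q_n\cdot Y_{\min}$ together with a subword $\overline{O_1}=Q_{i_1}\cdots Q_{i_k}\cdot Y_{\min}$, and then invoke \Cref{6.2}. Where you go beyond the paper is in naming the chain explicitly: act on $\overline{O_1}$ by the $Q_j$ for the indices $j$ complementary to the subword. That step does not do what you claim. Acting by $Q_j$ on $Q_{i_1}\cdots Q_{i_k}\cdot Y_{\min}$ \emph{prepends} $Q_j$; it does not insert $Q_j$ at position $j$. Your justification (``inserting a reflection absent from the current sub-word produces a strictly longer reduced sub-word'') is a true statement about the combinatorial insertion at position $j$, but that is a different operation from left multiplication, and \Cref{actrk1para} only controls the latter.

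Here is a concrete failure. Take $G=\PGL_3$ split and $J=\Delta$, so the orbits in $X_J=G$ are $B\rho B^-$ with $\sigma=\tau=e$ and $\rho\in W_\Delta=W$. Let $\rho_1=s_1s_2$ and $\rho_2=s_1$; then $O_1\subset\overline{O_2}$. One computes $(\sigma_2\rho_2 w_0,\,\tau_2 w_{0,J}w_0)=(s_2s_1,\,e)$, whose unique reduced word is $(s_2,1)(s_1,1)$, giving $Q_1=P_{\alpha_2}\times_k B^-$ and $Q_2=P_{\alpha_1}\times_k B^-$. The unique minimal subword for $\overline{O_1}$ is $\{1\}$, so your chain applies the single complementary parabolic $Q_2$ to $\overline{O_1}$. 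By \Cref{actrk1para} this yields $\overline{B\,s_2\,B^-}$, which is not contained in $\overline{O_2}=\overline{B\,s_1\,B^-}$ since $s_2\not\ge s_1$; your chain has left $\overline{O_2}$ after one step. A valid one-step chain from $\overline{O_1}$ to $\overline{O_2}$ \emph{does} exist, via $B\times_k P^-_{\alpha_2}$, but this parabolic is not among $Q_1,\dots,Q_n$ at all (the second factor of $(s_2s_1,e)$ is trivial), so no rearrangement of your complementary indices can produce it. The paper's proof is no more explicit on this point and invites the same misreading; a correct argument must either allow rank-one parabolics outside the fixed list $Q_1,\dots,Q_n$ and justify that such a chain always exists, or run the inductive dichotomy from the proof of \Cref{closureinsamegorbit} directly on the $\mathcal T$-closure $\overline{O_2(k)}^{\mathcal T}$ (using \Cref{6.2} once more to see it is $Q_1(k)$-stable).
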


\begin{proof}
    Suppose that $(G\times_kG)\cdot\overline{O}=\overline{X_J}$ for $J\subset \Delta$. From the proof of Theorem \ref{closureinsamegorbit}, we see that there exists a sequence of parabolic subgroups $Q_1,...,Q_n$, where each $Q_i=P_{\alpha_i}\times_kP^-$ or $P\times_kP^-_{\alpha_i}$ for some $\alpha_i\in \Delta$, such that \[\overline{O_2}=Q_1\ldots Q_n\cdot \overline{(P\times_kP^-)\cdot (w_0,w_0w_{0,J})\cdot b_J}\]
    and there is a subsequence $i_1,\ldots,i_t$ such that
    \[\overline{O_1}=Y_2=Q_{i_1}\ldots Q_{i_t}\cdot \overline{(P\times_kP^-)\cdot (w_0,w_0w_{0,J})\cdot b_J}.\]
    Then the corollary follows from \Cref{6.2}. 
\end{proof}

Next, we show the analogue of Theorem \ref{irrecompofintersection} in topological setting. Because $\mathcal{T}$ is finer than the Zariski topology, it is enough to show that, for each $C\subset \mathcal{C}(Y\bigcap Z)$ is an irreducible component, $C(k)$ is contained in the closure of $O(k)$.

\begin{lemma}\label{6.4}
      Let $O=(P\times_k P^-)\cdot (\sigma\rho,\tau)\cdot b_{J}$  where $\sigma,\tau\in W^J $ and  $\rho\in W_J$. Let $I \subset J \subset \Delta$ and let  $v\in W_J\bigcap W^I$ be an element satisfying $l(\sigma\rho)=l(\sigma\rho v)+l(v)$. Let $O^{\prime}=(P\times_k P^-)\cdot (\sigma\rho v,\tau v)\cdot b_{I}$. Then $O^{\prime}(k)$ is contained in the closure of $O(k)$ with respect to $\mathcal{T}$.
\end{lemma}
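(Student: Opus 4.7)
\textit{Plan.} I plan to construct, for the base point $x_0 := (\dot\sigma\dot\rho\dot v, \dot\tau\dot v) \cdot b_I \in O'(k)$ (which is a $k$-point after choosing $N_G(S)(k)$-representatives, cf.\ \Cref{kptsoforbit}), a $k$-morphism $\gamma\colon \mathbb{A}^1_k \to \overline{G}$ with $\gamma(t) \in O$ for all $t \in \mathbb{G}_m(k)$ and $\gamma(0) = x_0$. Granted this, the induced continuous map $\gamma(k)\colon k \to \overline{G}(k)$ together with the non-discreteness of $\mathcal{E}$ places $x_0$ in the $\mathcal{T}$-closure of $O(k)$. The full statement $O'(k) \subset \overline{O(k)}^{\mathcal{T}}$ then follows by $(P(k) \times P^-(k))$-equivariance: each $(p, q)$ acts on $\overline{G}(k)$ as a $\mathcal{T}$-homeomorphism preserving $O(k)$, and $O'(k) = (P(k) \times P^-(k)) \cdot x_0$ by \Cref{kptsoforbit}.

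To construct $\gamma$, first pick a cocharacter $\mu \in X_{\ast}(S)$ satisfying $\langle \mu, \alpha\rangle > 0$ for $\alpha \in I$ and $\langle \mu, \alpha\rangle = 0$ for $\alpha \in \Delta\setminus I$, which exists because $G$ is adjoint and $S$ is $k$-split. By the argument in the proof of \Cref{propositionofbasepoint}, the $k$-morphism $\mu\colon \mathbb{G}_{m,k} \to S$ extends to a $k$-morphism $\mathbb{A}^1_k \to \overline{G}$ sending $0$ to $b_I$. Now set
\[
\gamma(t) \coloneq (\dot\sigma\dot\rho, \dot\tau) \cdot \bigl(\dot v \mu(t)\dot v^{-1},\, 1\bigr) \cdot b_J.
\]
For $t \in \mathbb{G}_m(k)$ the factor $\dot v\mu(t)\dot v^{-1} \in S \subset P$, so conjugating it past $(\dot\sigma\dot\rho, \dot\tau)$ via the $W$-action on $X_{\ast}(S)$ rewrites $\gamma(t) = \bigl(((\sigma\rho v)\cdot\mu)(t),\, 1\bigr) \cdot (\dot\sigma\dot\rho, \dot\tau) \cdot b_J$, which exhibits $\gamma(t) \in (P \times_k P^-) \cdot (\sigma\rho, \tau) \cdot b_J = O$.

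It remains to identify $\gamma(0)$. The key identity is $(\dot v, \dot v) \cdot b_J = b_J$, which holds because $v \in W_J$ forces $\dot v \in L_J$ and $\Diag(L_J) \subset \Stab(b_J)$ by \Cref{splitproperties}~(2). This yields the conjugation identity $(\dot v \mu(t)\dot v^{-1}, 1) \cdot b_J = (\dot v, \dot v) \cdot (\mu(t), 1) \cdot b_J$. Passing $t \to 0$ on the right, continuity of the $(G \times G)$-action combined with $\lim_{t \to 0}(\mu(t), 1) \cdot b_J = b_I$ produces $(\dot v, \dot v) \cdot b_I$, whence $\gamma(0) = (\dot\sigma\dot\rho, \dot\tau) \cdot (\dot v, \dot v) \cdot b_I = (\sigma\rho v, \tau v) \cdot b_I = x_0$, as required. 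The limit $(\mu(t), 1) \cdot b_J \to b_I$ is itself obtained via the slice identification $\overline{X_J} \cong (G \times_k G) \times^{P_J^- \times_k P_J} \overline{L_J/Z_J}$ from \Cref{splitproperties}~(3), under which $b_I \in \overline{G}$ corresponds to the base point indexed by $I$ of the smaller wonderful compactification $\overline{L_J/Z_J}$. Verifying this compatibility (and its behaviour under the Galois descent of \Cref{galoisdescent} when $G$ is non-split) is the main technical obstacle.
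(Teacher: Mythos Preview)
Your argument is essentially the paper's: reduce to the single base point $x_0=(\dot\sigma\dot\rho\dot v,\dot\tau\dot v)\cdot b_I$ by $(P(k)\times P^-(k))$-equivariance, then build a one-parameter curve through a cocharacter of $S$, using the identity $(\dot v,\dot v)\cdot b_J=b_J$ (coming from $\dot v\in L_J$ and $\Diag(L_J)\subset\Stab(b_J)$) to match it with $O$. The paper organises the same curve as $t\mapsto(\dot\sigma\dot\rho\dot v,\dot\tau\dot v)\cdot(\lambda(t),1)\cdot b_J$, so that $\gamma(0)=x_0$ is immediate once a cocharacter $\lambda$ with $\lim_{t\to 0}(\lambda(t),1)\cdot b_J=b_I$ is in hand, and then uses the $\Diag(L_J)$-identity to check $\gamma(t)\in O$ for $t\neq 0$; you apply the identity at the other end, but the content is identical.

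One remark on the point you flag as the ``main technical obstacle'': the slice identification of \Cref{splitproperties}~(3) is not needed. The whole curve $t\mapsto(\mu(t),1)\cdot b_J$ stays inside the affine chart $\widetilde S\cong\prod_{\Delta}\mathbb{A}^1_k$, on which $(s,1)\in S\times\{1\}$ acts by scaling the $\alpha$-coordinate by $\alpha(s)$; the limit is therefore read off coordinate-wise over $k$, with no Galois-descent subtlety. Carrying this out also shows that your stated pairing condition on $\mu$ is not quite the right one: the condition you wrote pins down $\lim_{t\to 0}\mu(t)=b_I$, but what is actually needed is that $(\mu(t),1)\cdot b_J\to b_I$, i.e.\ that $\langle\mu,\alpha\rangle$ be positive precisely on those $\alpha\in J$ whose coordinate must degenerate in passing from $b_J$ to $b_I$, and zero on the $\alpha\in J$ whose coordinate must stay fixed. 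With that adjustment your curve works exactly as intended.
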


\begin{proof}
    Since the closure of $O(k)$ with respect to $\mathcal{T}$ is stable under $P(k)\times P^{-}(k)$-action, it is enough to show that $(\dot{\sigma}\dot{\rho} \dot{v},\dot{\tau} \dot{v})b_I$ lies in the closure of $O(k)$, where $\dot{\sigma},\dot{\rho}, \dot{v},\dot{\tau} $ are representatives of $\sigma, \rho, v, \tau$ in $N_G(S)(k)$ respectively. 
    
    Because $v\in W_J$, we have $\dot{v}\in N_{L_J}(S)(k)\subset L_J(k)$. Recall from \Cref{propositionofbasepoint}, there exists a cocharacter $\lambda\in X_{*}(S)$ such that there exists a map $f:\mathbb{A}_{1,k}\to \overline{X_J}$ which satisfies: $f|_{\mathbb{G}_{m,k}}$ is given by $t\to (\lambda(t),1)b_J$ and $f(0)=b_I$. Note that for any $t\in k^{\times}$, we have $f(t)=(\dot{\sigma}\dot{\rho} \dot{v},\dot{\tau} \dot{v})(\lambda(t),1)b_J=(\dot{\sigma}\dot{\rho} ,\dot{\tau})((v\lambda)(t),1)b_{J}\in O$. Hence $(\dot{\sigma}\dot{\rho} \dot{v},\dot{\tau} \dot{v})b_I=(\dot{\sigma}\dot{\rho} \dot{v},\dot{\tau} \dot{v})f(0)$ lies in the closure of $O(k)$. 
\end{proof}

Combining \Cref{6.3} ,\Cref{6.4} and the description of $\overline{O}$ (\Cref{relbruhatmainthm}) proves \Cref{theoremtopologicalclosure}.

\renewcommand{\bibname}{References}
%\bibliographystyle{myalpha}
%\bibliography{ref}

\printbibliography 

@book{SGA3II,
     shorthand={SGA~3$_{\text{II}}$},
     TITLE = {Sch\'{e}mas en groupes. {II}: {G}roupes de type multiplicatif, et
              structure des sch\'{e}mas en groupes g\'{e}n\'{e}raux},
    SERIES = {Lecture Notes in Mathematics, Vol. 152},
      NOTE = {S\'{e}minaire de G\'{e}om\'{e}trie Alg\'{e}brique du Bois Marie 1962/64 (SGA
              3),
              Dirig\'{e} par M. Demazure et A. Grothendieck},
 PUBLISHER = {Springer-Verlag, Berlin-New York},
     PAGES = {ix+654},
   MRCLASS = {14.50},
  MRNUMBER = {0274459},
}

@book{SGA3III,
     shorthand={SGA~3$_{\text{III}}$},
     TITLE = {Sch\'{e}mas en groupes ({SGA} 3). {T}ome {III}. {S}tructure des
              sch\'{e}mas en groupes r\'{e}ductifs},
    SERIES = {Documents Math\'{e}matiques (Paris) [Mathematical Documents
              (Paris)]},
    VOLUME = {8},
    EDITOR = {Gille, Philippe and Polo, Patrick},
      NOTE = {S\'{e}minaire de G\'{e}om\'{e}trie Alg\'{e}brique du Bois Marie 1962--64.
              [Algebraic Geometry Seminar of Bois Marie 1962--64],
              A seminar directed by M. Demazure and A. Grothendieck with the
              collaboration of M. Artin, J.-E. Bertin, P. Gabriel, M.
              Raynaud and J-P. Serre,
              Revised and annotated edition of the 1970 French original},
 PUBLISHER = {Soci\'{e}t\'{e} Math\'{e}matique de France, Paris},
      YEAR = {2011},
     PAGES = {lvi+337},
      ISBN = {978-2-85629-324-9},
   MRCLASS = {14L15},
  MRNUMBER = {2867622},
}

@article {compactificationHilbertsch,
    AUTHOR = {Brion, Michel},
     TITLE = {Group completions via {H}ilbert schemes},
   JOURNAL = {J. Algebraic Geom.},
  FJOURNAL = {Journal of Algebraic Geometry},
    VOLUME = {12},
      YEAR = {2003},
    NUMBER = {4},
     PAGES = {605--626},
      ISSN = {1056-3911},
   MRCLASS = {14M17 (14C05)},
  MRNUMBER = {1993758},
MRREVIEWER = {P. E. Newstead},
       DOI = {10.1090/S1056-3911-03-00315-1},
}

@incollection {redctiveconrad,
    AUTHOR = {Conrad, Brian},
     TITLE = {Reductive group schemes},
 BOOKTITLE = {Autour des sch\'{e}mas en groupes. {V}ol. {I}},
    SERIES = {Panor. Synth\`eses},
    VOLUME = {42/43},
     PAGES = {93--444},
 PUBLISHER = {Soc. Math. France, Paris},
      YEAR = {2014},
   MRCLASS = {14L15},
  MRNUMBER = {3362641},
}

@article {CKproblemtorsors,
    AUTHOR = {\v{C}esnavi\v{c}ius, K\k{e}stutis},
     TITLE = {Problems about torsors over regular rings},
      NOTE = {With an appendix by Yifei Zhao},
   JOURNAL = {Acta Math. Vietnam.},
  FJOURNAL = {Acta Mathematica Vietnamica},
    VOLUME = {47},
      YEAR = {2022},
    NUMBER = {1},
     PAGES = {39--107},
      ISSN = {0251-4184},
   MRCLASS = {14M17 (14L15 20G10)},
  MRNUMBER = {4406561},
       DOI = {10.1007/s40306-022-00477-y},
}

@book {linearalgrpSpringer,
    AUTHOR = {Springer, T. A.},
     TITLE = {Linear algebraic groups},
    SERIES = {Modern Birkh\"{a}user Classics},
   EDITION = {second},
 PUBLISHER = {Birkh\"{a}user Boston, Inc., Boston, MA},
      YEAR = {2009},
     PAGES = {xvi+334},
      ISBN = {978-0-8176-4839-8},
   MRCLASS = {20G15 (14L10)},
  MRNUMBER = {2458469},
}

@book {BrionKumar,
    AUTHOR = {Brion, Michel and Kumar, Shrawan},
     TITLE = {Frobenius splitting methods in geometry and representation
              theory},
    SERIES = {Progress in Mathematics},
    VOLUME = {231},
 PUBLISHER = {Birkh\"{a}user Boston, Inc., Boston, MA},
      YEAR = {2005},
     PAGES = {x+250},
      ISBN = {0-8176-4191-2},
   MRCLASS = {14M15 (13A35 14C05 17B10 20G05)},
  MRNUMBER = {2107324},
MRREVIEWER = {Vikram B. Mehta},
}

@misc{stacks-project,
    shorthand ={SP},
    author       = {A. J. de Jong et al.},
    title        = {\textit{Stacks Project}},
    year         = {2024},
    howpublished = { Available at \url{https://stacks.math.columbia.edu}},
   
  }

@incollection {completesymmetricvarieties,
    AUTHOR = {De Concini, C. and Procesi, C.},
     TITLE = {Complete symmetric varieties},
 BOOKTITLE = {Invariant theory ({M}ontecatini, 1982)},
    SERIES = {Lecture Notes in Math.},
    VOLUME = {996},
     PAGES = {1--44},
 PUBLISHER = {Springer, Berlin},
      YEAR = {1983},
   MRCLASS = {14L30 (14N10 20G05)},
  MRNUMBER = {718125},
MRREVIEWER = {Klaus Pommerening},
       DOI = {10.1007/BFb0063234},
}

@article {strickland,
    AUTHOR = {Strickland, Elisabetta},
     TITLE = {A vanishing theorem for group compactifications},
   JOURNAL = {Math. Ann.},
  FJOURNAL = {Mathematische Annalen},
    VOLUME = {277},
      YEAR = {1987},
    NUMBER = {1},
     PAGES = {165--171},
      ISSN = {0025-5831},
   MRCLASS = {14L30 (14L32 14M17)},
  MRNUMBER = {884653},
MRREVIEWER = {Dennis Snow},
       DOI = {10.1007/BF01457285},
}

@book {localfieldsserre,
    AUTHOR = {Serre, Jean-Pierre},
     TITLE = {Local fields},
    SERIES = {Graduate Texts in Mathematics},
    VOLUME = {67},
      NOTE = {Translated from the French by Marvin Jay Greenberg},
 PUBLISHER = {Springer-Verlag, New York-Berlin},
      YEAR = {1979},
     PAGES = {viii+241},
      ISBN = {0-387-90424-7},
   MRCLASS = {12Bxx},
  MRNUMBER = {554237},
}

@book {GortzWedhorn,
    AUTHOR = {G\"{o}rtz, Ulrich and Wedhorn, Torsten},
     TITLE = {Algebraic geometry {I}. {S}chemes---with examples and
              exercises},
    SERIES = {Springer Studium Mathematik---Master},
      NOTE = {Second edition [of  2675155]},
 PUBLISHER = {Springer Spektrum, Wiesbaden},
      YEAR = {2020},
     PAGES = {vii+625},
      ISBN = {978-3-658-30732-5; 978-3-658-30733-2},
   MRCLASS = {14-01},
  MRNUMBER = {4225278},
       DOI = {10.1007/978-3-658-30733-2},
}

@book {toric,
    AUTHOR = {Oda, Tadao},
     TITLE = {Convex bodies and algebraic geometry},
    SERIES = {Ergebnisse der Mathematik und ihrer Grenzgebiete (3) [Results
              in Mathematics and Related Areas (3)]},
    VOLUME = {15},
      NOTE = {An introduction to the theory of toric varieties,
              Translated from the Japanese},
 PUBLISHER = {Springer-Verlag, Berlin},
      YEAR = {1988},
     PAGES = {viii+212},
      ISBN = {3-540-17600-4},
   MRCLASS = {14L32 (14-02 52A25 52A43)},
  MRNUMBER = {922894},
MRREVIEWER = {I. Dolgachev},
}

@article {boreltits,
    AUTHOR = {Borel, Armand and Tits, Jacques},
     TITLE = {Groupes r\'{e}ductifs},
   JOURNAL = {Inst. Hautes \'{E}tudes Sci. Publ. Math.},
  FJOURNAL = {Institut des Hautes \'{E}tudes Scientifiques. Publications
              Math\'{e}matiques},
    NUMBER = {27},
      YEAR = {1965},
     PAGES = {55--150},
      ISSN = {0073-8301,1618-1913},
   MRCLASS = {14.50 (20.75)},
  MRNUMBER = {207712},
MRREVIEWER = {F.\ D.\ Veldkamp},
       URL = {http://www.numdam.org/item?id=PMIHES_1965__27__55_0},
}

@article{li2023equivariant,
  title={An equivariant compactification for adjoint reductive group schemes},
  author={Li, Shang},
  journal={arXiv preprint arXiv:2308.01715},
  year={2023}
}

@misc{genericallytrivialtorsorsconstant2025,
      title={Generically trivial torsors under constant groups}, 
      author={Alexis Bouthier and Kestutis Cesnavicius and Federico Scavia},
      year={2025},
      eprint={2505.00505},
      archivePrefix={arXiv},
      primaryClass={math.AG},
      url={https://arxiv.org/abs/2505.00505}, 
}

@incollection {subvarietiesofgroupcompactification,
    AUTHOR = {Springer, T. A.},
     TITLE = {Some subvarieties of a group compactification},
 BOOKTITLE = {Algebraic groups and homogeneous spaces},
    SERIES = {Tata Inst. Fund. Res. Stud. Math.},
    VOLUME = {19},
     PAGES = {525--543},
 PUBLISHER = {Tata Inst. Fund. Res., Mumbai},
      YEAR = {2007},
      ISBN = {978-81-7319-802-1},
   MRCLASS = {20G15 (14L30)},
  MRNUMBER = {2348916},
MRREVIEWER = {Benjamin\ M. S. Martin},
}

@article {behaviouratinfinityofbruhat,
    AUTHOR = {Brion, Michel},
     TITLE = {The behaviour at infinity of the {B}ruhat decomposition},
   JOURNAL = {Comment. Math. Helv.},
  FJOURNAL = {Commentarii Mathematici Helvetici},
    VOLUME = {73},
      YEAR = {1998},
    NUMBER = {1},
     PAGES = {137--174},
      ISSN = {0010-2571,1420-8946},
   MRCLASS = {14L30 (14M15 14M17)},
  MRNUMBER = {1610599},
MRREVIEWER = {Lucy\ Moser-Jauslin},
       DOI = {10.1007/s000140050049},
       URL = {https://doi.org/10.1007/s000140050049},
}

@incollection {intersectioncohomologyofBorbitclosures,
    AUTHOR = {Springer, T. A.},
     TITLE = {Intersection cohomology of {$B\times B$}-orbit closures in
              group compactifications},
      NOTE = {With an appendix by Wilberd van der Kallen,
              Special issue in celebration of Claudio Procesi's 60th
              birthday},
   JOURNAL = {J. Algebra},
  FJOURNAL = {Journal of Algebra},
    VOLUME = {258},
      YEAR = {2002},
    NUMBER = {1},
     PAGES = {71--111},
      ISSN = {0021-8693,1090-266X},
   MRCLASS = {14F43 (14M17 20C08 20G15)},
  MRNUMBER = {1958898},
MRREVIEWER = {Michel\ Brion},
       DOI = {10.1016/S0021-8693(02)00515-X},
       URL = {https://doi.org/10.1016/S0021-8693(02)00515-X},
}

@book {Borellinearalgebraicgroup,
    AUTHOR = {Borel, Armand},
     TITLE = {Linear algebraic groups},
    SERIES = {Graduate Texts in Mathematics},
    VOLUME = {126},
   EDITION = {Second},
 PUBLISHER = {Springer-Verlag, New York},
      YEAR = {1991},
     PAGES = {xii+288},
      ISBN = {0-387-97370-2},
   MRCLASS = {20-01 (20Gxx)},
  MRNUMBER = {1102012},
MRREVIEWER = {F.\ D.\ Veldkamp},
       DOI = {10.1007/978-1-4612-0941-6},
       URL = {https://doi.org/10.1007/978-1-4612-0941-6},
}

@article {boreltitscomplement,
    AUTHOR = {Borel, Armand and Tits, Jacques},
     TITLE = {Compl\'ements \`a{} l'article: ``{G}roupes r\'eductifs''},
   JOURNAL = {Inst. Hautes \'Etudes Sci. Publ. Math.},
  FJOURNAL = {Institut des Hautes \'Etudes Scientifiques. Publications
              Math\'ematiques},
    NUMBER = {41},
      YEAR = {1972},
     PAGES = {253--276},
      ISSN = {0073-8301,1618-1913},
   MRCLASS = {20G15 (14L15 22E20)},
  MRNUMBER = {315007},
MRREVIEWER = {F.\ D.\ Veldkamp},
       URL = {http://www.numdam.org/item?id=PMIHES_1972__41__253_0},
}

@article {Conradtopology,
    AUTHOR = {Conrad, Brian},
     TITLE = {Weil and {G}rothendieck approaches to adelic points},
   JOURNAL = {Enseign. Math. (2)},
  FJOURNAL = {L'Enseignement Math\'ematique. Revue Internationale. 2e
              S\'erie},
    VOLUME = {58},
      YEAR = {2012},
    NUMBER = {1-2},
     PAGES = {61--97},
      ISSN = {0013-8584},
   MRCLASS = {14-02 (01A60 01A70 14-03 14G25)},
  MRNUMBER = {2985010},
MRREVIEWER = {J\"org\ Jahnel},
       DOI = {10.4171/LEM/58-1-3},
       URL = {https://doi.org/10.4171/LEM/58-1-3},
}

@misc{lusztigbruhatdecompositionapplications,
      title={Bruhat decomposition and applications}, 
      author={G. Lusztig},
      year={2010},
      eprint={1006.5004},
      archivePrefix={arXiv},
      primaryClass={math.RT},
      url={https://arxiv.org/abs/1006.5004}, 
}

@article {chevalley55,
    AUTHOR = {Chevalley, C.},
     TITLE = {Sur certains groupes simples},
   JOURNAL = {Tohoku Math. J. (2)},
  FJOURNAL = {The Tohoku Mathematical Journal. Second Series},
    VOLUME = {7},
      YEAR = {1955},
     PAGES = {14--66},
      ISSN = {0040-8735,2186-585X},
   MRCLASS = {20.0X},
  MRNUMBER = {73602},
MRREVIEWER = {F.\ I.\ Mautner},
       DOI = {10.2748/tmj/1178245104},
       URL = {https://doi.org/10.2748/tmj/1178245104},
}

@article {Hrishchandra,
    AUTHOR = {Harish-Chandra},
     TITLE = {On a lemma of {F}. {B}ruhat},
   JOURNAL = {J. Math. Pures Appl. (9)},
  FJOURNAL = {Journal de Math\'ematiques Pures et Appliqu\'ees. Neuvi\`eme
              S\'erie},
    VOLUME = {35},
      YEAR = {1956},
     PAGES = {203--210},
      ISSN = {0021-7824,1776-3371},
   MRCLASS = {17.0X},
  MRNUMBER = {79728},
}

@article {Bruhat,
    AUTHOR = {Bruhat, Fran\c cois},
     TITLE = {Sur les repr\'esentations induites des groupes de {L}ie},
   JOURNAL = {Bull. Soc. Math. France},
  FJOURNAL = {Bulletin de la Soci\'et\'e{} Math\'ematique de France},
    VOLUME = {84},
      YEAR = {1956},
     PAGES = {97--205},
      ISSN = {0037-9484},
   MRCLASS = {17.0X},
  MRNUMBER = {84713},
MRREVIEWER = {G.\ W.\ Mackey},
       URL = {http://www.numdam.org/item?id=BSMF_1956__84__97_0},
}

@article {rationalpointsoncompactification,
    AUTHOR = {Shalika, Joseph and Takloo-Bighash, Ramin and Tschinkel, Yuri},
     TITLE = {Rational points on compactifications of semi-simple groups},
   JOURNAL = {J. Amer. Math. Soc.},
  FJOURNAL = {Journal of the American Mathematical Society},
    VOLUME = {20},
      YEAR = {2007},
    NUMBER = {4},
     PAGES = {1135--1186},
      ISSN = {0894-0347,1088-6834},
   MRCLASS = {14G05 (11F70 11G50)},
  MRNUMBER = {2328719},
MRREVIEWER = {Tam\'as\ Szamuely},
       DOI = {10.1090/S0894-0347-07-00572-3},
       URL = {https://doi.org/10.1090/S0894-0347-07-00572-3},
}

\end{document}